\documentclass{article}




    \usepackage[final,nonatbib]{neurips_2023} 


\usepackage[utf8]{inputenc} 
\usepackage[T1]{fontenc}    
\usepackage{hyperref}       
\usepackage{url}            
\usepackage{booktabs}       
\usepackage{amsfonts}       
\usepackage{nicefrac}       
\usepackage{microtype}      
\usepackage{xcolor}         

\usepackage{amssymb}
\usepackage{amsmath,nccmath}
\usepackage{bbm}
\usepackage{multirow}
\usepackage{makecell}
\usepackage{amsthm}
\newtheorem{definition}{Definition}[section]
\newtheorem{theorem}{Theorem}[section]
\newtheorem{corollary}{Corollary}[theorem]
\newtheorem{lemma}[theorem]{Lemma}
\newtheorem{prop}{Proposition}[section]
\usepackage{bigints}
\usepackage{graphicx}
\setcounter{MaxMatrixCols}{20}

\usepackage{minitoc}

\makeatletter
\renewcommand*\env@matrix[1][*\c@MaxMatrixCols c]{%
  \hskip -\arraycolsep
  \let\@ifnextchar\new@ifnextchar
  \array{#1}}
\makeatother

  

\title{Generator Identification for Linear SDEs with Additive and Multiplicative Noise}

%

\author{%
  Yuanyuan~Wang\\
  The University of Melbourne\\
  \texttt{yuanyuanw2@student.unimelb.edu.au} \\
    \And
  Xi~Geng \\
  The University of Melbourne\\
  \texttt{xi.geng@unimelb.edu.au} \\
    \And
  Wei~Huang \\
  The University of Melbourne\\
  \texttt{wei.huang@unimelb.edu.au} \\
    \And
  Biwei~Huang \\
  University of California, San Diego\\
  \texttt{bih007@ucsd.edu} \\
    \And
  Mingming~Gong \thanks{Corresponding author.}\\
  The University of Melbourne\\
  \texttt{mingming.gong@unimelb.edu.au} \\
}

\begin{document}

\maketitle

\doparttoc 
\faketableofcontents

\begin{abstract}
 In this paper, we present conditions for identifying the \textbf{generator} of a linear stochastic differential equation (SDE) from the distribution of its solution process with a given fixed initial state. These identifiability conditions are crucial in causal inference using linear SDEs as they enable the identification of the post-intervention distributions from its observational distribution. Specifically, we derive a sufficient and necessary condition for identifying the generator of linear SDEs with additive noise, as well as a sufficient condition for identifying the generator of linear SDEs with multiplicative noise. We show that the conditions derived for both types of SDEs are generic. Moreover, we offer geometric interpretations of the derived identifiability conditions to enhance their understanding. To validate our theoretical results, we perform a series of simulations, which support and substantiate the established findings.

\end{abstract}

\section{Introduction}
Stochastic differential equations (SDEs) are a powerful mathematical tool for modelling dynamic systems subject to random fluctuations. These equations are widely used in various scientific disciplines, including finance \cite{chow1979optimum, lima2018price, phillips1972structural}, physics \cite{schuss1980singular, sobczyk2001stochastic,van1976stochastic},  biology \cite{allen2010introduction, braumann2019introduction, wilkinson2009stochastic} and engineering \cite{henderson2006stochastic, rong2006theory, sobczyk2001stochastic}. In recent years, SDEs have garnered growing interest in the machine learning research community. Specifically, they have been used for tasks such as modelling time series data \cite{iacus2008simulation, jia2019neural, mariani2016stochastic} and estimating causal effects \cite{bellot2021neural, mogensen2018causal, sanchez2022diffusion}.

To enhance understanding we first introduce the SDEs of our interest, which are multidimensional linear SDEs with additive and multiplicative noise, respectively. Consider an $m$-dimensional standard Brownian motion defined on a filtered probability space $(\Omega, \mathcal{F}, \mathbb{P}, \{\mathcal{F}_t\})$, denoted by $W := \{W_t= [W_{1,t}, \ldots, W_{m,t}]^{\top}: 0\leqslant t<\infty\}$. Let $X_t \in \mathbb{R}^d$ be the state at time $t$ and let $x_0 \in \mathbb{R}^d$ be a constant vector denoting the initial state of the system, we present the forms of the aforementioned two linear SDEs.
\begin{enumerate}
    \item Linear SDEs with additive noise.
        \begin{equation}\label{eq:SDEs}
            dX_t = A X_tdt + GdW_t \,, \ \ \ X_0 = x_0\,,
        \end{equation}
        where $0\leqslant t<\infty$, $A\in \mathbb{R}^{d\times d}$ and $G \in \mathbb{R}^{d\times m}$ are some constant matrices.
    \item Linear SDEs with multiplicative noise.
        \begin{equation}\label{eq:SDEs2}
            dX_t = A X_tdt + \textstyle\sum_{k=1}^mG_kX_tdW_{k,t} \,, \ \ \ X_0 = x_0\,,
        \end{equation}
        where $0\leqslant t<\infty$, $A, G_k \in \mathbb{R}^{d\times d}$ for $k = 1, \ldots, m$ are some constant matrices.
\end{enumerate}

Linear SDEs are wildly used in financial modeling for tasks like asset pricing, risk assessment, and portfolio optimization \cite{arribas2020sig, capinski2012black, klein1996pricing}. Where they are used to model the evolution of financial variables, such as stock prices and interest rates. Furthermore, linear SDEs are also used in genomic research, for instance, they are used for modeling the gene expression in the yeast microorganism Saccharomyces Cerevisiae \cite{hansen2014causal}. The identifiability analysis of linear SDEs is essential for reliable causal inference of dynamic systems governed by these equations. For example, in the case of Saccharomyces Cerevisiae, one aims to identify the system such that making reliable causal inference when interventions are introduced. Such interventions may involve deliberate knockout of specific genes to achieve optimal growth of an organism. In this regard, identifiability analysis plays a pivotal role in ensuring reliable predictions concerning the impact of interventions on the system.

Previous studies on identifiability analysis of linear SDEs have primarily focused on Gaussian diffusions, as described by the SDE \eqref{eq:SDEs} \cite{blevins2017identifying, hansen1983dimensionality, kessler2004identification, le1985some,  mccrorie2003problem, prior2017maximum}. These studies are typically based on observations located on one trajectory of the system and thus require restrictive identifiability conditions, such as the ergodicity of the diffusion or other restrictive requirements on the eigenvalues of matrix $A$. However, in practical applications, multiple trajectories of the dynamic system can often be accessed \cite{goldberger2000physiobank, lu2021learning, rubanova2019latent, silva2012predicting}. In particular, these multiple trajectories may start from the same initial state, e.g., in experimental studies where repeated trials or experiments are conducted under the same conditions \cite{browning2020identifiability, collins2019quantifying, kugler2012moment, levien1993double} or when the same experiment is performed on multiple identical units \cite{sankaran2001effects}. To this end, this work presents an identifiability analysis for linear SDEs based on the distribution of the observational process with a given fixed initial state. Furthermore, our study is not restricted to Gaussian diffusions \eqref{eq:SDEs}, but also encompasses linear SDEs with multiplicative noise \eqref{eq:SDEs2}. Importantly, the conditions derived for both types of SDEs are generic, meaning that the set of system parameters that violate the proposed conditions has Lebesgue measure zero. 

Traditional identifiability analysis of dynamic systems focuses on deriving conditions under which a unique set of parameters can be obtained from error-free observational data. However, our analysis of dynamic systems that are described by SDEs aims to uncover conditions that would enable a unique generator to be obtained from its observational distribution. Our motivation for identifying generators of SDEs is twofold. Firstly, obtaining a unique set of parameters from the distribution of a stochastic process described by an SDE is generally unfeasible. For example, in the SDE \eqref{eq:SDEs}, parameter $G$ cannot be uniquely identified since one can only identify $GG^{\top}$ based on the distribution of its solution process \cite{hansen2014causal, le1985some}. Secondly, the identifiability of an SDE's generator suffices for reliable causal inferences for this system. Note that, in the context of SDEs, the main task of causal analysis is to identify the post-intervention distributions from the observational distribution. As proposed in \cite{hansen2014causal}, for an SDE satisfying specific criteria, the post-intervention distributions are identifiable from the generator of this SDE. Consequently, the intricate task of unraveling causality can be decomposed into two constituent components through the generator. This paper aims to uncover conditions under which the generator of a linear SDE attains identifiability from the observational distribution. By establishing these identifiability conditions, we can effectively address the causality task for linear SDEs.


In this paper, we present a sufficient and necessary identifiability condition for the generator of linear SDEs with additive noise \eqref{eq:SDEs}, along with a sufficient identifiability condition for the generator of linear SDEs with multiplicative noise \eqref{eq:SDEs2}.

\section{Background knowledge}
In this section, we introduce some background knowledge of linear SDEs. In addition, we provide a concise overview of the causal interpretation of SDEs, which is a critical aspect of understanding the nature and dynamics of these equations. This interpretation also forms a strong basis for the motivation of this research.

\subsection{Background knowledge of linear SDEs}
 The solution to the SDE \eqref{eq:SDEs} can be explicitly expressed as (cf. \cite{sarkka2019applied}):
\begin{equation}\label{eq:sol_SDE1}
    X_t := X(t; x_0, A, G) = e^{At}x_0 + \int_0^t e^{A(t-s)} GdW_s\,.
\end{equation}
Note that in the context of our study, the solution stands for the strong solution, refer to \cite{karatzas1991brownian} for its detailed definition.

In general, obtaining an explicit expression for the solution to the SDE \eqref{eq:SDEs2} is not feasible. In fact, an explicit solution can be obtained when the matrices $A, G_1, \ldots, G_k$ commute, that is when
\begin{equation}\label{eq:SDE2 explicit solution conditions}
AG_k = G_k A \ \ \ \text{and} \ \ \ G_k G_l = G_l G_k 
\end{equation}
holds for all $k, l = 1, \ldots, m$ (cf. \cite{Kloeden1992numerical}). However, the conditions described in \eqref{eq:SDE2 explicit solution conditions} are too restrictive and impractical. Therefore, this study will focus on the general case of the SDE \eqref{eq:SDEs2}.

We know that both the SDE \eqref{eq:SDEs} and the SDE \eqref{eq:SDEs2} admit unique solutions that manifest as continuous stochastic processes \cite{karatzas1991brownian}. A $d$-dimensional stochastic process is a collection of $\mathbb{R}^d$-valued random variables, denoted as $X=\{X_t; 0\leqslant t < \infty\}$ defined on some probability space. When comparing two stochastic processes, $X$ and $\tilde{X}$, that are defined on the same probability space $(\Omega, \mathcal{F}, \mathbb{P})$, various notions of equality may be considered. In this study, we adopt the notion of equality with respect to their distributions, which is a weaker requirement than strict equivalence, see \cite{karatzas1991brownian} for relevant notions. We now present the definition of the distribution of a stochastic process.
\begin{definition}
    Let $X$ be a random variable on a probability space $(\Omega, \mathcal{F}, \mathbb{P})$ with values in a measurable space $(S, \mathcal{B}(S))$, i.e., the function $X:\Omega \rightarrow S$ is $\mathcal{F}/\mathcal{B}(S)$-measurable. Then, the distribution of the random variable $X$ is the probability measure $P^X$ on $(S, \mathcal{B}(S))$ given by
    \begin{equation*}
        P^X(B) = \mathbb{P}(X \in B)=\mathbb{P}\{\omega \in \Omega: X(\omega) \in B\}\,, \ \ \ B \in \mathcal{B}(S)\,.
    \end{equation*}
When $X := \{X_t;  0 \leqslant t < \infty\}$ is a continuous stochastic process on $(\Omega, \mathcal{F}, \mathbb{P})$, and $S=C[0,\infty)$, such an $X$ can be regarded as a random variable on $(\Omega, \mathcal{F}, \mathbb{P})$ with values in $(C[0,\infty), \mathcal{B}(C[0,\infty)))$, and $P^X$ is called the distribution of $X$. Here $C[0,\infty)$ stands for the space of all continuous, real-valued functions on $[0, \infty]$. 
\end{definition}
It is noteworthy that the distribution of a continuous process can be uniquely determined by its finite-dimensional distributions. Hence, if two stochastic processes, labelled as $X$ and $\tilde{X}$, share identical finite-dimensional distributions, they are regarded as equivalent in distribution, denoted by $X \overset{\text{d}}{=} \tilde{X}$. Relevant concepts and theories regarding this property can be found in \cite{karatzas1991brownian}.

The generator of a stochastic process is typically represented by a differential operator that acts on functions. It provides information about how a function evolves over time in the context of the underlying stochastic process. Mathematically, the generator of a stochastic process $X_t$ can be defined as 
\begin{equation*}
    (\mathcal{L}f)(x)= \lim_{s\rightarrow 0}\cfrac{\mathbb{E}[f(X_{t+s})-f(X_t)|X_t= x]}{s}\,,
\end{equation*}
where $f$ is a suitably regular function.

In the following, we present the generator of the SDEs under consideration.
Obviously, both the SDE \eqref{eq:SDEs} and the SDE \eqref{eq:SDEs2} conform to the general form:
\begin{equation}\label{eq:diffusion process}
    dX_t = b(X_t)dt + \sigma(X_t) dW_t\,, \ \ \ X_0  = x_0\,.
\end{equation}
where $b$ and $\sigma$ are locally Lipschitz continuous in the space variable $x$.
The generator $\mathcal{L}$ of the SDE \eqref{eq:diffusion process} can be explicitly computed by utilizing It\^o's formula (cf. \cite{sarkka2019applied}).

\begin{prop}\label{prop:generator}
Let $X$ be a stochastic process defined by the SDE \eqref{eq:diffusion process}. The generator $\mathcal{L}$ of $X$ on $C_b^2(\mathbb{R}^d)$ is given by
\begin{equation}\label{eq:generator of diffusion process}
    (\mathcal{L}f)(x) := \sum_{i=1}^d b_i(x) \frac{\partial f(x)}{\partial x_i} + \frac{1}{2}\sum_{i,j=1}^d c_{ij}(x) \frac{\partial^2 f(x)}{\partial x_i \partial x_j}
\end{equation}
for $f \in C_b^2(\mathbb{R}^d)$ and $x \in \mathbb{R}^d$, where $c(x) = \sigma(x) \cdot \sigma(x)^{\top}$ is a $d\times d$ matrix, and $C_b^2(\mathbb{R}^d)$ denotes the space of continuous functions on $\mathbb{R}^d$ that have bounded derivatives up to order two.
\end{prop}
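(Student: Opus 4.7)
The plan is to apply It\^o's formula to $f(X_t)$ and then read off the generator by taking a conditional expectation and dividing by $s$. Since $b$ and $\sigma$ are only locally Lipschitz (so the coefficients may grow at infinity), I will need a localization argument to handle the stochastic integral term, but the final expression follows cleanly from the quadratic variation structure of the driving Brownian motion.

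First, I would fix $f\in C_b^2(\mathbb{R}^d)$ and apply the multidimensional It\^o formula to $f(X_t)$. Writing $dX_t^{(i)} = b_i(X_t)\,dt + \sum_{k=1}^m \sigma_{ik}(X_t)\,dW_{k,t}$ and noting that the quadratic covariation satisfies $d\langle X^{(i)},X^{(j)}\rangle_t = c_{ij}(X_t)\,dt$ with $c = \sigma\sigma^{\top}$, this produces
\begin{equation*}
f(X_{t+s}) - f(X_t) = \int_t^{t+s} (\mathcal{L}f)(X_u)\,du + \sum_{i=1}^d \sum_{k=1}^m \int_t^{t+s} \sigma_{ik}(X_u)\,\partial_i f(X_u)\,dW_{k,u},
\end{equation*}
where $(\mathcal{L}f)(x)$ is exactly the expression claimed in the statement. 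This is the key calculation; everything else is verifying that the remaining steps are justified.

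Next, I would condition on $\{X_t = x\}$ and take expectations. The main technical step is showing that the It\^o integral on the right contributes zero in expectation. Because $\partial_i f$ is bounded while $\sigma$ is only locally Lipschitz, the integrand need not be in $L^2$ a priori, so I would introduce a stopping time $\tau_R := \inf\{u\geq t : \|X_u\|\geq R\}$, note that the stopped process $X_{u\wedge \tau_R}$ stays bounded so that $\sigma(X_{\cdot \wedge \tau_R})$ is bounded, and conclude that the stopped stochastic integral is a genuine martingale with zero expectation. Path continuity together with $X_t = x$ gives $\mathbb{P}(\tau_R > t+s)\to 1$ as $R\to\infty$ for any fixed small $s$, and the bounded convergence theorem lets me pass to the limit.

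Thus, for sufficiently small $s$,
\begin{equation*}
\mathbb{E}\bigl[f(X_{t+s}) - f(X_t)\,\big|\,X_t = x\bigr] = \mathbb{E}\left[\int_t^{t+s}(\mathcal{L}f)(X_u)\,du\,\bigg|\,X_t = x\right].
\end{equation*}
Dividing by $s$ and sending $s\downarrow 0$, the continuity of the paths $u\mapsto X_u$ and of the map $y\mapsto (\mathcal{L}f)(y)$ (which holds because $b$, $\sigma$ are continuous and $f\in C_b^2$), combined with dominated convergence on the set $\{\tau_R > t+s\}$, yield the limit $(\mathcal{L}f)(x)$, matching the definition of the generator. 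The main obstacle I anticipate is the localization bookkeeping for the stochastic integral; the algebraic identification of the coefficients in $\mathcal{L}$ is immediate from It\^o's formula.
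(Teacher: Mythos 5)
Your argument is the standard It\^o-formula derivation that the paper itself invokes without proof: Proposition \ref{prop:generator} is stated with a citation to S\"arkk\"a and Solin and the remark that it can be computed from It\^o's formula, so you are matching the intended route exactly. The outline is correct; the only points you gloss over are that the limit passages ($R\to\infty$ and then $s\downarrow 0$) require non-explosion of the solution and a moment bound making $(\mathcal{L}f)(X_u)$ integrable --- note that $b$ and $c$ are unbounded even though $f\in C_b^2(\mathbb{R}^d)$, so bounded convergence alone does not cover the drift term --- both of which hold for the linear-growth coefficients of the SDEs actually considered in the paper.
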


\subsection{Causal interpretation of SDEs}
An important motivation for the identification of the generator of an SDE lies in the desire to infer reliable causality within dynamic models described by SDEs. In this subsection, we aim to provide some necessary background knowledge on the causal interpretation of SDEs. Consider the general SDE framework described as:
\begin{equation}\label{eq:general SDE}
    dX_t = a(X_t)dZ_t\,,\ \ \  
    X_0 = x_0\,,
\end{equation}
where $Z$ is a $p$-dimensional semimartingale and $a: \mathbb{R}^d \rightarrow \mathbb{R}^{d \times p}$ is a continuous mapping. By writing the SDE \eqref{eq:general SDE} in integral form
\begin{equation}\label{eq:general SDE integral form}
    X_t^i = x_0^i + \textstyle\sum_{j=1}^p \int_{0}^t a_{ij}(X_s)dZ_s^j\,, \ \ \ i \leqslant d\,.
\end{equation}
The authors of \cite{hansen2014causal} proposed a mathematical definition of the SDE resulting from an intervention to the SDE \eqref{eq:general SDE integral form}. In the following, $X^{(-l)}$ denotes the $(d-1)$-dimensional vector that results from the removal of the $l$-th coordinate of $X \in \mathbb{R}^d$.
\begin{definition}\cite[Definition 2.4.]{hansen2014causal}\label{def:post-intervention SDE}
    Consider some $l\leqslant d$ and $\zeta: \mathbb{R}^{d-1} \rightarrow \mathbb{R}$. The SDE arising from \eqref{eq:general SDE integral form} under the intervention $X_t^l := \zeta(X_t^{(-l)})$ is the $(d-1)$-dimensional equation
    \begin{equation}\label{eq:genearal SDE post-intervention}
       (Y^{(-l)})_t^i = x_0^i + \textstyle\sum_{j=1}^p \int_{0}^t b_{ij}(Y_s^{(-l)})dZ_s^j\,, \ \ \ i \neq l\,,
    \end{equation}
    where $b:\mathbb{R}^{d-1} \rightarrow \mathbb{R}^{(d-1)\times p}$ is defined by $b_{ij}(y) = a_{ij}(y_1, \ldots, \zeta(y),\ldots, y_d)$ for $i\neq l$ and $j\leqslant p$ and the $\zeta(y)$ is on the $l$-th coordinate.
\end{definition}
Definition \ref{def:post-intervention SDE} presents a natural approach to defining how interventions should affect dynamic systems governed by SDEs. We adopt the same notations as used in \cite{hansen2014causal}. Assuming \eqref{eq:general SDE integral form} and \eqref{eq:genearal SDE post-intervention} have unique solutions for all interventions, 
we refer to \eqref{eq:general SDE integral form} as the observational SDE, to its solution as the observational process, to the distribution of its solution as observational distribution, to \eqref{eq:genearal SDE post-intervention} as the post-intervention SDE, to the solution of \eqref{eq:genearal SDE post-intervention} as the post-intervention process, and to the distribution of the solution of \eqref{eq:genearal SDE post-intervention} as the post-intervention distribution. The authors in \cite{hansen2014causal} related Definition \ref{def:post-intervention SDE} to mainstream causal concepts by establishing a mathematical connection between SDEs and structural equation models (SEMs). Specifically, the authors showed that under regularity assumptions, the solution to the post-intervention SDE is equal to the limit of a sequence of interventions in SEMs based on the Euler scheme of the observational SDE. Despite the fact that the parameters of the SDEs are generally not identifiable from the observational distribution, the post-intervention distributions can be identified, thus enabling causal inference of the system. To this end, Sokol and Hansen \cite{hansen2014causal} derived a condition under which the generator associated with the observational SDE allows for the identification of the post-intervention distributions. We present the corresponding theory as follows.

\begin{lemma}\cite[Theorem 5.3.]{hansen2014causal}\label{lemma:generator to postdistribution} Consider the SDEs
\begin{equation}\label{eq:levy model1}
    dX_t = a(X_t)dZ_t\,,\ \ \  
    X_0 = x_0\,,
\end{equation}
\begin{equation}\label{eq:levy model2}
    d\tilde{X}_t = \tilde{a}(\tilde{X}_t)d\tilde{Z}_t\,,\ \ \ 
    \tilde{X}_0 = \tilde{x}_0\,,
\end{equation}
where $Z$ is a $p$-dimensional L\'evy process and $\tilde{Z}$ is a $\tilde{p}$-dimensional L\'evy process. Assume that \eqref{eq:levy model1} and \eqref{eq:levy model2} have the same generator, that $a: \mathbb{R}^d \rightarrow \mathbb{R}^{d \times p}$ and $\zeta: \mathbb{R}^{d-1} \rightarrow \mathbb{R}$ are Lipschitz and that the initial values have the same distribution. Then the post-intervention distributions of doing $X^l := \zeta( X^{(-l)})$ in \eqref{eq:levy model1} and doing $\tilde{X}^l := \zeta(\tilde{X}^{(-l)})$ in \eqref{eq:levy model2} are equal for any choice of $\zeta$ and $l$.
\end{lemma}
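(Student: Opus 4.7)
The plan is to reduce the claim to two facts: (i) for a Lévy-driven SDE with Lipschitz coefficient, the law of the strong solution starting from a fixed initial distribution is uniquely determined by its generator (this is well-posedness of the associated martingale problem), and (ii) the generator of the post-intervention SDE is a purely local functional of the generator of the observational SDE, obtained by restricting to functions that do not depend on the $l$-th coordinate and evaluating the result along the hyperplane $x^l = \zeta(x^{(-l)})$. Once both facts are in place, same generator upstairs forces same generator downstairs, which forces same post-intervention distribution.

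First I would recall the explicit form of the generator of the Lévy SDE \eqref{eq:levy model1}. If $Z$ has characteristic triplet $(\beta,\Sigma,\nu)$, then for $f\in C_b^2(\mathbb{R}^d)$,
\begin{equation*}
(\mathcal{L}f)(x)=\sum_{i,j}\bigl(a(x)\beta\bigr)_i\partial_i f(x)+\tfrac12\sum_{i,j}\bigl(a(x)\Sigma a(x)^{\top}\bigr)_{ij}\partial_{ij}f(x)+\int\!\!\bigl[f(x+a(x)z)-f(x)-a(x)z\cdot\nabla f(x)\mathbbm{1}_{\|z\|\leq 1}\bigr]\nu(dz),
\end{equation*}
and similarly for $\tilde{\mathcal{L}}$ with data $(\tilde a,\tilde\beta,\tilde\Sigma,\tilde\nu)$. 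This expression shows that $\mathcal{L}f(x)$ depends on $a$ only through the value $a(x)$ and on $f$ only through its values near $x$; in particular, if $f$ is independent of $x^l$, then no derivative of $f$ in the $l$-direction appears in the local terms, and only the $i\neq l$ rows of $a(x)$ contribute to the drift/diffusion parts.

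Second, I would set up the following bridge between the two generators. For $g\in C_b^2(\mathbb{R}^{d-1})$, define the lift $\bar g:\mathbb{R}^d\to\mathbb{R}$ by $\bar g(x):=g(x^{(-l)})$. A direct computation using the previous display shows that the generator $\mathcal{L}^{\mathrm{int}}$ of the post-intervention process \eqref{eq:genearal SDE post-intervention} satisfies
\begin{equation*}
(\mathcal{L}^{\mathrm{int}}g)(y)=(\mathcal{L}\bar g)\bigl(y_1,\ldots,\zeta(y),\ldots,y_{d-1}\bigr),\qquad y\in\mathbb{R}^{d-1},
\end{equation*}
because the coefficient $b(y)$ of the intervened SDE is exactly $a(\cdot)$ evaluated on the graph $\{x^l=\zeta(x^{(-l)})\}$ with the $l$-th row removed, and $\bar g$ kills every derivative in the $l$-direction. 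The analogous identity holds with tildes. Since $\mathcal{L}=\tilde{\mathcal{L}}$ as operators on $C_b^2(\mathbb{R}^d)$, applying both sides to $\bar g$ and evaluating on the graph yields $\mathcal{L}^{\mathrm{int}}g=\tilde{\mathcal{L}}^{\mathrm{int}}g$ for every $g\in C_b^2(\mathbb{R}^{d-1})$; hence the two post-intervention generators coincide.

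Finally, I would invoke well-posedness of the martingale problem for Lipschitz Lévy SDEs: under the standing Lipschitz hypothesis on $a$ and $\zeta$ (which is inherited by $b$), each post-intervention SDE admits a unique strong solution, and two Lévy-type Markov processes with the same initial distribution and the same generator on $C_b^2$ have the same law on path space. Combined with the equality of initial distributions given in the hypothesis, this gives $Y^{(-l)}\stackrel{d}{=}\tilde Y^{(-l)}$, which is the claim. The main obstacle I anticipate is the rigorous bookkeeping in the second step: one has to check that the jump term of $\mathcal{L}$ applied to $\bar g$ genuinely reduces, on the graph $x^l=\zeta(x^{(-l)})$, to the jump term of the intervened generator, which uses the fact that the $l$-th row of $b$ is absent and that $\bar g$ depends only on $x^{(-l)}$; once this is verified, the rest is routine application of uniqueness for the martingale problem.
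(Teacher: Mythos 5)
This lemma is quoted verbatim from Hansen and Sokol \cite[Theorem 5.3]{hansen2014causal} and the paper offers no proof of its own, so the only comparison available is with the original source. Your proposal reconstructs essentially that argument: write the generator in Courr\`ege/L\'evy--It\^o form, show that the post-intervention generator equals the observational generator applied to lifted functions $\bar g(x)=g(x^{(-l)})$ and evaluated on the graph $x^l=\zeta(x^{(-l)})$ (your jump-term bookkeeping does go through, since $(a(x)z)^{(-l)}=b(y)z$ on the graph and $\nabla\bar g$ has vanishing $l$-th component), and then conclude via uniqueness in law from the generator. The one point to flag is that your final ingredient --- that two Lipschitz L\'evy-driven SDEs with equal generators and equal initial laws have equal path-space laws --- is itself a nontrivial theorem (Theorem 4.3 of that reference, resting on Courr\`ege-type uniqueness of the triplet and the equivalence between martingale problems and weak solutions) rather than a routine citation, so your proof is correct as an outline but black-boxes the heaviest step, exactly as the cited source isolates it into a separate theorem.
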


A main task in the causality research community is to uncover the conditions under which the post-intervention distributions are identifiable from the observational distribution. In the context of dynamic systems modelled in SDEs, similar conditions need to be derived. Lemma \ref{lemma:generator to postdistribution} establishes that, for SDEs with a L\'evy process as the driving noise, the post-intervention distributions can be identifiable from the generator. Nevertheless, a gap remains between the observational distribution and the SDE generator’s identifiability. This work aims to address this gap by providing conditions under which the generator is identifiable from the observational distribution.

\section{Main results}\label{sec:main results}
In this section, we present some prerequisites first, and then we present the main theoretical results of our study, which include the condition for the identifiability of generator that is associated with the SDE \eqref{eq:SDEs} / SDE \eqref{eq:SDEs2} from the distribution of the corresponding solution process.

\subsection{Prerequisites}

We first show that both the SDE \eqref{eq:SDEs} and the SDE \eqref{eq:SDEs2} satisfy the conditions stated in Lemma \ref{lemma:generator to postdistribution}.

\begin{lemma}\label{lemma:levy process}
Both the SDE \eqref{eq:SDEs} and the SDE \eqref{eq:SDEs2} can be expressed as the form of \eqref{eq:levy model1}, 
with $Z$ being a $p$-dimensional L\'evy process, and $a: \mathbb{R}^d \rightarrow \mathbb{R}^{d \times p}$ being Lipschitz.
\end{lemma}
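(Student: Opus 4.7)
The plan is to exhibit, for each of the two SDEs, an explicit Lévy process $Z$ and an explicit map $a$ that recast the equation into the form \eqref{eq:levy model1}, and then verify Lipschitz continuity of $a$ directly.

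First I would handle the drift term. The issue is that the form \eqref{eq:levy model1} is driven purely by $dZ_t$, so the deterministic $dt$ part of the drift must be absorbed into $Z$. The natural device is to introduce the augmented process
\begin{equation*}
Z_t := (t, W_{1,t}, W_{2,t}, \ldots, W_{m,t})^{\top},
\end{equation*}
which is $\mathbb{R}^{p}$-valued with $p = m+1$. I would briefly verify that $Z$ is a Lévy process: it has stationary and independent increments (the deterministic first coordinate trivially inherits these, and is independent of $W$ by construction), it starts at the origin, and its paths are continuous, hence càdlàg. So $Z$ qualifies as a $p$-dimensional Lévy process in the sense of Lemma \ref{lemma:generator to postdistribution}.

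Next I would specify $a$ for each SDE. For the additive noise equation \eqref{eq:SDEs}, set
\begin{equation*}
a(x) := [\,Ax \,\mid\, G\,] \in \mathbb{R}^{d\times(m+1)},
\end{equation*}
where the first column is $Ax$ and the remaining $m$ columns are those of the constant matrix $G$. Then $a(X_t)\,dZ_t = A X_t\,dt + G\,dW_t$, matching \eqref{eq:SDEs}. For the multiplicative noise equation \eqref{eq:SDEs2}, set
\begin{equation*}
a(x) := [\,Ax \,\mid\, G_1 x \,\mid\, \cdots \,\mid\, G_m x\,] \in \mathbb{R}^{d\times(m+1)},
\end{equation*}
so that $a(X_t)\,dZ_t = A X_t\,dt + \sum_{k=1}^m G_k X_t\, dW_{k,t}$, matching \eqref{eq:SDEs2}.

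Finally I would verify Lipschitz continuity of $a$ in both cases. In the additive case, $a(x) - a(y) = [A(x-y) \mid 0]$, so $\|a(x) - a(y)\|_{\mathrm{F}} \leqslant \|A\|_{\mathrm{op}}\,\|x-y\|$. In the multiplicative case, $a(x) - a(y) = [A(x-y) \mid G_1(x-y) \mid \cdots \mid G_m(x-y)]$, and a direct bound gives
\begin{equation*}
\|a(x)-a(y)\|_{\mathrm{F}} \leqslant \bigl(\|A\|_{\mathrm{op}}^2 + \textstyle\sum_{k=1}^m \|G_k\|_{\mathrm{op}}^2\bigr)^{1/2}\|x-y\|,
\end{equation*}
which is the required global Lipschitz estimate with a finite constant. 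I do not anticipate a serious obstacle here; the only point requiring mild care is recognising that absorbing the deterministic drift into $Z$ via a time coordinate is permissible under the Lévy-process hypothesis of Lemma \ref{lemma:generator to postdistribution}, and everything else reduces to elementary linear algebra.
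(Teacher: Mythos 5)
Your proposal is correct and follows essentially the same route as the paper: absorb the drift into a Lévy process $Z_t=(t,W_t)$ with $p=m+1$, define $a$ as the block matrix $[Ax\mid G]$ (resp. $[Ax\mid G_1x\mid\cdots\mid G_mx]$), and verify the Lipschitz property by a direct norm estimate. The only cosmetic difference is that you bound the Frobenius norm via operator norms and a sum-of-squares identity where the paper uses Frobenius norms and the triangle inequality; both yield the required global Lipschitz constant.
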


The proof of Lemma \ref{lemma:levy process} can be found in Appendix \ref{proof of lemma:levy process}. This lemma suggests that Lemma \ref{lemma:generator to postdistribution} applies to both the SDE \eqref{eq:SDEs} and the SDE \eqref{eq:SDEs2}, given that they meet the specified conditions. Therefore, for either SDE, deriving the conditions that allow for the generator to be identifiable from the observational distribution is sufficient. By applying  Lemma \ref{lemma:generator to postdistribution}, when the intervention function $\zeta$ is Lipschitz, the post-intervention distributions can be identified from the observational distribution under these conditions. 

We then address the identifiability condition of the generator $\mathcal{L}$ defined by \eqref{eq:generator of diffusion process}.
\begin{prop}\label{theorem:generator same}
    Let $\mathcal{L}$ and $\tilde{\mathcal{L}}$ be generators of stochastic processes defined by the form of the SDE \eqref{eq:diffusion process} on $C_b^2(\mathbb{R}^d)$, where $\mathcal{L}$ is given by \eqref{eq:generator of diffusion process} and $\tilde{\mathcal{L}}$ is given by the same expression, with $\tilde{b}(x)$ and $\tilde{c}(x)$ substituted for $b(x)$ and $c(x)$. It then holds that the two generators $\mathcal{L} = \tilde{\mathcal{L}}$ if and only if $b(x) = \tilde{b}(x)$ and $c(x) = \tilde{c}(x)$ for all $x\in \mathbb{R}^d$.
\end{prop}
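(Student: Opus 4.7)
The forward implication is immediate: if $b=\tilde b$ and $c=\tilde c$ pointwise, then $\mathcal{L}f(x)=\tilde{\mathcal{L}}f(x)$ for every $f\in C_b^2(\mathbb{R}^d)$ and $x\in\mathbb{R}^d$ by inspection of formula \eqref{eq:generator of diffusion process}. So the plan focuses on the converse: given that $\mathcal{L}f(x)=\tilde{\mathcal{L}}f(x)$ for every $f\in C_b^2(\mathbb{R}^d)$ and every $x$, we must recover $b_i(x)$ and $c_{ij}(x)$ from the operator. The idea is to construct, at each point $x_0$, a family of test functions whose first and second partial derivatives at $x_0$ are prescribed Kronecker-delta patterns, isolating one coefficient at a time.

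The key tool is a fixed smooth cutoff $\phi\in C_c^\infty(\mathbb{R}^d)$ with $\phi(0)=1$ and $\nabla\phi(0)=0$. Having compact support guarantees that every function built from $\phi$ by multiplying with polynomials of the coordinates will have bounded derivatives of every order, hence will lie in $C_b^2(\mathbb{R}^d)$. Fix an arbitrary $x_0\in\mathbb{R}^d$ and define, for $i=1,\dots,d$,
\begin{equation*}
f_i(y) := (y_i-x_{0,i})\,\phi(y-x_0).
\end{equation*}
A direct computation using $\phi(0)=1$, $\nabla\phi(0)=0$ gives $\partial_k f_i(x_0)=\delta_{ik}$ and $\partial^2_{kl} f_i(x_0)=0$. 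Plugging into \eqref{eq:generator of diffusion process} yields $\mathcal{L}f_i(x_0)=b_i(x_0)$ and $\tilde{\mathcal{L}}f_i(x_0)=\tilde b_i(x_0)$, so the hypothesis forces $b_i(x_0)=\tilde b_i(x_0)$.

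Next, for each pair $1\leqslant i\leqslant j\leqslant d$, define
\begin{equation*}
f_{ij}(y) := (y_i-x_{0,i})(y_j-x_{0,j})\,\phi(y-x_0).
\end{equation*}
All first partials of $f_{ij}$ vanish at $x_0$ (each term contains at least one factor $y_i-x_{0,i}$ or $y_j-x_{0,j}$), whereas a short calculation gives $\partial^2_{kl} f_{ij}(x_0)=\delta_{ik}\delta_{jl}+\delta_{il}\delta_{jk}$. Substitution into \eqref{eq:generator of diffusion process} and use of the symmetry $c_{kl}=c_{lk}$ (which holds since $c=\sigma\sigma^\top$) gives $\mathcal{L}f_{ij}(x_0)=c_{ij}(x_0)$ and analogously $\tilde{\mathcal{L}}f_{ij}(x_0)=\tilde c_{ij}(x_0)$, so that $c_{ij}(x_0)=\tilde c_{ij}(x_0)$. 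Since $x_0$ was arbitrary, $b\equiv\tilde b$ and $c\equiv\tilde c$ on $\mathbb{R}^d$.

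I do not anticipate a serious obstacle; the only mild point that needs care is verifying that the compactly supported test functions indeed lie in $C_b^2(\mathbb{R}^d)$ as defined in the paper (which is immediate since compactly supported $C^2$ functions trivially have bounded derivatives of all orders up to two), and correctly handling the symmetrization when reading off $c_{ij}$ from the mixed second-derivative pattern. Both are routine.
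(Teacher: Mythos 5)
Your proof is correct and follows essentially the same route as the paper: evaluate the generator identity on test functions that are linear in the coordinates (to read off $b_i(x_0)$) and quadratic centered at $x_0$ (to read off $c_{ij}(x_0)$ via the symmetry of $c$). Your multiplication by a compactly supported cutoff $\phi$ is in fact a welcome refinement, since the paper applies the operators directly to $x_p$ and $(x_p-x_{1p})(x_q-x_{1q})$, whose first derivatives are not all bounded, so those test functions do not strictly lie in $C_b^2(\mathbb{R}^d)$ as the paper defines it.
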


The proof of Proposition \ref{theorem:generator same} can be found in Appendix \ref{proof of theorem:generator same}. This proposition states that for stochastic processes defined by the SDE \eqref{eq:diffusion process}, the generator is identifiable from functions associated with its coefficients: $b(x)$ and $c(x) = \sigma(x)\cdot \sigma(x)^{\top}$.

\subsection{Conditions for identifying generators of linear SDEs with additive noise}


Expressing the SDE \eqref{eq:SDEs} in the form given by \eqref{eq:diffusion process} yields $b(x) = Ax$ and $c(x) = GG^{\top}$. 
Therefore, based on Proposition \ref{theorem:generator same}, we define the identifiability of the generator of the SDE \eqref{eq:SDEs} as follows. 

\begin{definition}[$(x_0, A, G)$-identifiability]
For $x_0 \in \mathbb{R}^d, A\in \mathbb{R}^{d\times d}$ and $G \in \mathbb{R}^{d\times m}$, the generator of the SDE \eqref{eq:SDEs} is said to be identifiable from $x_0$, if for all $\tilde{A}\in \mathbb{R}^{d\times d}$ and all $\tilde{G} \in \mathbb{R}^{d\times m}$, with $(A, GG^{\top}) \neq (\tilde{A}, \tilde{G}\tilde{G}^{\top})$, it holds that $X(\cdot; x_0, A, G) $\footnote{$X(\cdot; x_0, A, G) = \{X(t; x_0, A, G): 0 \leqslant t < \infty\}$} $\overset{\textnormal{d}}{\neq} X(\cdot; x_0, \tilde{A}, \tilde{G}) $.
\end{definition}

In the following, we begin by introducing two lemmas that serve as the foundation for deriving our main identifiability theorem.
\begin{lemma}\label{lemma:law same}
For $x_0 \in \mathbb{R}^d, A, \tilde{A} \in \mathbb{R}^{d\times d}$ and $G, \tilde{G} \in \mathbb{R}^{d\times m}$, let $X_t := X(t; x_0, A, G)$, $\tilde{X}_t := X(t; x_0, \tilde{A}, \tilde{G})$, then $X(\cdot; x_0, A, G) \overset{\textnormal{d}}{=} X(\cdot; x_0, \tilde{A}, \tilde{G}) $ if and only if the mean $\mathbb{E}[X_t]=\mathbb{E}[\tilde{X}_t]$ and the covariance $\mathbb{E}\{(X_{t+h}-\mathbb{E}[X_{t+h}])(X_{t}-\mathbb{E}[X_{t}])^{\top}\}=\mathbb{E}\{(\tilde{X}_{t+h}-\mathbb{E}[\tilde{X}_{t+h}])(\tilde{X}_{t}-\mathbb{E}[\tilde{X}_{t}])^{\top}\}$ for all $0 \leqslant t < \infty$ and $0 \leqslant h < \infty$.
\end{lemma}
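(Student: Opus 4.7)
The plan is to exploit the fact that the solution process of the additive-noise SDE is Gaussian. Recall from \eqref{eq:sol_SDE1} that
\[
X_t = e^{At}x_0 + \int_0^t e^{A(t-s)} G\,dW_s,
\]
where the stochastic integral has a deterministic integrand. I would first argue that $X$ is a Gaussian process, i.e.\ every finite-dimensional distribution $(X_{t_1}, \ldots, X_{t_n})$ is multivariate Gaussian. This follows from the fact that stochastic integrals of deterministic matrix-valued functions against a standard Brownian motion are Gaussian (they can be obtained as $L^2$-limits of Gaussian Riemann sums), and finite linear combinations of such integrals across different times are again Gaussian. The same reasoning applies to $\tilde X$.

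Next, I would invoke the standard fact (recalled in the background section via \cite{karatzas1991brownian}) that the distribution on $C[0,\infty)$ of a continuous process is uniquely determined by its finite-dimensional distributions. Combined with the Gaussianity, this reduces $X \overset{\textnormal{d}}{=} \tilde X$ to the matching of the mean function $m(t) := \mathbb{E}[X_t]$ and the covariance function $K(s,t) := \mathbb{E}\{(X_s - m(s))(X_t - m(t))^\top\}$ for $X$ and $\tilde X$, because a multivariate Gaussian law is determined by its mean and covariance.

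From here the proof becomes bookkeeping. For the ``only if'' direction, equal distributions immediately force equal means and equal two-time covariances. For the ``if'' direction, given that $m(t) = \tilde m(t)$ and $K(t+h,t) = \tilde K(t+h,t)$ for all $t,h \geq 0$, symmetry of the covariance $K(s,t) = K(t,s)^\top$ gives agreement of $K(s,t)$ for all $s,t \geq 0$. Thus the mean vectors and covariance matrices of arbitrary finite-dimensional marginals $(X_{t_1},\ldots,X_{t_n})$ and $(\tilde X_{t_1},\ldots,\tilde X_{t_n})$ coincide, so their joint Gaussian laws coincide, which yields $X \overset{\textnormal{d}}{=} \tilde X$.

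The only genuinely nontrivial step is the Gaussianity claim; the rest is the uniqueness theorem for Gaussian laws plus the finite-dimensional determination of continuous-path laws. I do not anticipate a substantial obstacle, since both ingredients are classical; I would simply cite the appropriate passage of \cite{karatzas1991brownian} for the determination of the law by finite-dimensional marginals and verify Gaussianity by the stochastic integral approximation argument sketched above.
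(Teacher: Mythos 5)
Your proposal is correct and follows essentially the same route as the paper: both reduce the equivalence to the Gaussianity of the solution process (stochastic integral of a deterministic integrand) together with the fact that a Gaussian process law on $C[0,\infty)$ is determined by its mean and covariance functions. You simply spell out more of the details (the $L^2$-approximation argument for Gaussianity and the use of $K(s,t)=K(t,s)^{\top}$ to recover the two-time covariance for all orderings of the time arguments) that the paper's proof leaves implicit.
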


The proof of Lemma \ref{lemma:law same} can be found in Appendix \ref{proof of lemma:law same}. This lemma states that for stochastic processes modelled by the SDE \eqref{eq:SDEs}, the equality of the distribution of two processes can be deconstructed as the equality of the mean and covariance of the state variables at all time points. Calculation shows
\begin{equation}\label{eq:mean covariance}
\begin{split}
     \mathbb{E}[X_t]= &\, e^{At}x_0\,,\\
      V(t,t+h) := &\, \mathbb{E}\{(X_{t+h}-\mathbb{E}[X_{t+h}])(X_{t}-\mathbb{E}[X_{t}])^{\top}\}\\
      = &\, e^{Ah}V(t)\,,
\end{split}
\end{equation}
where $V(t) := V(t,t)$. Please refer to the proof \ref{proof of thm:m1c1} of Theorem \ref{thm:m1c1} for the detailed calculations. It can be easily checked that $\mathbb{E}[X_t]$ follows the linear ordinary differential equation (ODE)
\begin{equation}\label{eq:ODEs}
    \dot{m}(t) = Am(t), \ \ \ m(0) = x_0\,,
\end{equation}
where $\dot{m}(t)$ denotes the first derivative of function $m(t)$ with respect to time $t$. Similarly, each column of the covariance $V(t, t+h)$ also follows the linear ODE \eqref{eq:ODEs} but with a different initial state: the corresponding column of $V(t)$. This observation allows us to leverage not only the characteristics of the SDE \eqref{eq:SDEs}, but also the established theories \cite{qiu2022identifiability, stanhope2014identifiability} on identifiability analysis for the ODE \eqref{eq:ODEs}, to derive the identifiability conditions for the generator of the SDE \eqref{eq:SDEs}.

We adopt the same setting as in \cite{qiu2022identifiability}, discussing the case where $A$ has distinct eigenvalues. Because random matrix theory suggests that almost every $A \in \mathbb{R}^{d\times d}$ has $d$ distinct eigenvalues with respect to the Lebesgue measure on $\mathbb{R}^{d\times d}$ \cite{qiu2022identifiability}. And the Jordan decomposition of such a matrix $A$ follows a straightforward form which is helpful for deriving the geometric interpretation of the proposed identifiability condition. The Jordan decomposition can be expressed as $A = Q\Lambda Q^{-1}$, where
\begin{equation*}
    \Lambda = \begin{bmatrix}
    J_1 & & \\
    &\ddots & \\
    & & J_K
    \end{bmatrix}\,,
    \mbox{with }J_k = \left\{\begin{array}{ll} \lambda_k, & \mbox{if } k = 1,\ldots, K_1\,,\\
    \begin{bmatrix}
        a_k & -b_k\\
        b_k & a_k
    \end{bmatrix}, & \mbox{if } k = K_1+1, \ldots, K\,.
    \end{array}\right.
\end{equation*}
\begin{equation*}
        Q = [Q_1| \ldots | Q_K] = [v_1|\ldots|v_d]\,,\\
\end{equation*}
\begin{equation*}
    Q_k = \left\{ \begin{array}{ll}
    v_k, & \mbox{if } k = 1, \ldots, K_1\,,\\
    \begin{bmatrix}v_{2k-K_1-1}|v_{2k-K_1}\end{bmatrix}, & \mbox{if } k= K_1+1, \ldots, K\,,
    \end{array}\right.
\end{equation*}
where $\lambda_k$ is a real eigenvalue of $A$ and $v_k$ is the corresponding eigenvector of $\lambda_k$, for $k = 1, \ldots, K_1$. For $k= K_1+1, \ldots, K$, $[v_{2k-K_1-1}|v_{2k-K_1}]$ are the corresponding ``eigenvectors'' of complex eigenvalues $a_k \pm b_ki$. 
Inspired by \cite[Definition 2.3., Lemma 2.3.]{qiu2022identifiability}, we establish the following Lemma.
\begin{lemma}\label{lemma:w_jk=0}
    Assuming $A \in \mathbb{R}^{d\times d}$ has $d$ distinct eigenvalues, with Jordan decomposition $A=Q\Lambda Q^{-1}$. Let $\gamma_j \in \mathbb{R}^{d}$ and $\tilde{\gamma}_j := Q^{-1} \gamma_j \in \mathbb{R}^d$ for all $j =1 ,\ldots, n$ with $n\geqslant 2$. We define 
    \begin{equation*}
        w_{j,k} := \left\{\begin{array}{ll}
        \tilde{\gamma}_{j,k} \in \mathbb{R}^1, & \mbox{for } k = 1, \ldots, K_1\,,\\
        (\tilde{\gamma}_{j,2k-K_1-1}, \tilde{\gamma}_{j,2k-K_1})^{\top} \in \mathbb{R}^2, & \mbox{for } k= K_1+1, \ldots, K\,,
        \end{array}\right.
    \end{equation*}
    where $\tilde{\gamma}_{j,k}$ denotes the $k$-th entry of $\tilde{\gamma}_{j}$.
    \textnormal{rank}$([\gamma_1|A\gamma_1|\ldots|A^{d-1}\gamma_1|\ldots|\gamma_n|A\gamma_n|\ldots|A^{d-1}\gamma_n]) < d$ if and only if there exists $k \in \{1, \ldots, K\}$, such that $|w_{j,k}| = 0$ for all $j = 1, \ldots, n$, where $|w_{j,k}|$ is the absolute value of $w_{j,k}$ for $k = 1, \ldots, K_1$, and the Euclidean norm of $w_{j,k}$ for $k= K_1+1, \ldots, K$.
\end{lemma}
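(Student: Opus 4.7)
The plan is to translate the rank condition into a geometric statement about the smallest $A$-invariant subspace containing the $\gamma_j$'s, and then read off that subspace from the Jordan decomposition of $A$. The starting observation is that the column span of the block $[\gamma_j \,|\, A\gamma_j \,|\, \ldots \,|\, A^{d-1}\gamma_j]$ is the Krylov subspace $\mathcal{K}(A,\gamma_j) = \mathrm{span}\{A^i\gamma_j : 0 \leqslant i \leqslant d-1\}$, which by the Cayley--Hamilton theorem coincides with the smallest $A$-invariant subspace of $\mathbb{R}^d$ containing $\gamma_j$. Hence the column span of the full concatenated matrix is $V := \sum_{j=1}^n \mathcal{K}(A,\gamma_j)$, the smallest $A$-invariant subspace containing all of $\gamma_1,\ldots,\gamma_n$, and the rank is less than $d$ precisely when $V \subsetneq \mathbb{R}^d$.

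I would then use the distinct-eigenvalue hypothesis to catalogue the real $A$-invariant subspaces. Setting $E_k := \mathrm{span}(Q_k)$, so that $\mathbb{R}^d = \bigoplus_{k=1}^K E_k$, I claim every real $A$-invariant subspace has the form $\bigoplus_{k \in S} E_k$ for some $S \subseteq \{1,\ldots,K\}$. This is established by complexifying, using that the complex eigenvalues of $A$ are distinct (so the complex $A$-invariant subspaces of $\mathbb{C}^d$ are direct sums of the $1$-dimensional complex eigenspaces), and then descending to $\mathbb{R}^d$ via the conjugation-invariance constraint, which forces each complex conjugate pair of eigenspaces to appear together and thereby singles out exactly the real blocks $E_k$. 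Now writing $\gamma_j = Q\tilde{\gamma}_j = \sum_{k=1}^K Q_k w_{j,k}$, the invertibility of $Q$ implies that the $E_k$-component of $\gamma_j$ vanishes if and only if $w_{j,k}=0$. Consequently, $V = \bigoplus_{k \in S^*} E_k$ with $S^* := \{k : w_{j,k} \neq 0 \text{ for some } j\}$, and the condition $V \subsetneq \mathbb{R}^d$ is equivalent to the existence of some $k$ with $w_{j,k}=0$ for every $j$, which is the stated criterion.

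The main technical obstacle is the classification of real $A$-invariant subspaces in the presence of complex conjugate eigenvalue pairs: one must verify that no proper nontrivial real subspace strictly inside a $2$-dimensional block $E_k$ (for $k > K_1$) can be $A$-invariant, which is most cleanly handled through the complexification argument above rather than by a direct computation on the real $2 \times 2$ rotation-scaling Jordan blocks. Once this structural fact is in hand, the remainder of the argument reduces to routine linear algebra bookkeeping in the $Q$-coordinates.
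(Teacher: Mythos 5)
Your proposal is correct, and it takes a genuinely different route from the paper. The paper proves the forward direction by passing to arbitrary linear combinations $\beta=\sum_j l_j\gamma_j$, invoking the ODE identifiability results of Stanhope et al.\ and Qiu et al.\ to conclude that for each $l$ some coordinate block $w_k=\sum_j l_j w_{j,k}$ vanishes, and then running a Lebesgue-measure argument on the set of $l$'s to force a single $k$ to work for all $j$ simultaneously; the backward direction is a separate explicit factorization $M=Q_{-k}C$ with $\mathrm{rank}(Q_{-k})<d$. You instead identify the column span of each block with the Krylov subspace $\mathcal{K}(A,\gamma_j)$ via Cayley--Hamilton, so that the rank condition becomes the statement that the smallest $A$-invariant subspace containing all $\gamma_j$ is proper, and then classify all real $A$-invariant subspaces under the distinct-eigenvalue hypothesis as $\bigoplus_{k\in S}E_k$ by complexifying and descending through conjugation-invariance. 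Both directions of the equivalence then drop out at once from $V=\bigoplus_{k\in S^*}E_k$ with $S^*=\{k:\exists j,\ w_{j,k}\neq 0\}$. Your argument is self-contained (no appeal to the ODE identifiability literature or to a measure-zero argument), handles the two directions symmetrically, and makes the geometric content of the lemma explicit; the paper's version is less structural but reuses machinery already cited elsewhere in the text. The one step you should spell out if writing this in full is the descent in the classification: that a real $A$-invariant subspace $W$ satisfies $W=(W\oplus iW)\cap\mathbb{R}^d$ and that conjugation-invariance of $W\oplus iW$ forces the participating complex eigenspaces to come in conjugate pairs, which is exactly what rules out proper nontrivial invariant subspaces inside the two-dimensional blocks $E_k$, $k>K_1$. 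As stated, this is a known standard fact and your sketch of it is accurate.
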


The proof of Lemma \ref{lemma:w_jk=0} can be found in Appendix \ref{proof of lemma:w_jk=0}. From a geometric perspective, $\gamma_j$ can be decomposed into a linear combination of $Q_k$'s
\begin{equation*}
    \gamma_j = Q\tilde{\gamma}_j = \textstyle\sum_{k=1}^K Q_k w_{j,k}\,.
\end{equation*}
Let $L_k := \text{span}(Q_k)$. According to \cite[Theorem 2.2]{qiu2022identifiability}, each $L_k$ is an $A$-invariant subspace of $\mathbb{R}^d$. Recall that a space $L$ is called $A$-invariant, if for all $\gamma \in L$, $A\gamma \in L$. We say $L$ is a proper subspace of $\mathbb{R}^d$ if $L \subset \mathbb{R}^d$ and $L \neq \mathbb{R}^d$. If  $|w_{j,k}| = 0$ (i.e., $w_{j,k} = 0$ in $\mathbb{R}^1 \text{ or } \mathbb{R}^2$), then $\gamma_j$ does not contain any information from $L_k$. In this case, $\gamma_j$ is contained in an $A$-invariant proper subspace of $\mathbb{R}^d$ that excludes $L_k$, denoted as $L_{-k}$. It is worth emphasizing that $L_{-k} \subset \mathbb{R}^d$ is indeed a \textbf{proper} subspace of $\mathbb{R}^d$. This further implies that the trajectory of the ODE \eqref{eq:ODEs} generated from initial state $\gamma_j$ is confined to $L_{-k}$ \cite[Lemma 3.2] {stanhope2014identifiability}. Lemma \ref{lemma:w_jk=0} indicates that if rank$([\gamma_1|A\gamma_1|\ldots|A^{d-1}\gamma_1|\ldots|\gamma_n|A\gamma_n|\ldots|A^{d-1}\gamma_n]) < d$ then all $\gamma_j$ for $j=1,\ldots, n$ are confined to an $A$-invariant proper subspace of $\mathbb{R}^d$, denoted as $L$. Therefore, all trajectories of the ODE \eqref{eq:ODEs}
generated from initial states $\gamma_j$ are also confined to $L$. Furthermore, based on the identifiability conditions proposed in \cite{stanhope2014identifiability}, the ODE \eqref{eq:ODEs} is not identifiable from observational data collected in these trajectories.
This lemma provides an approach to interpreting our identifiability conditions from a geometric perspective.

Now we are ready to present our main theorem.

\begin{theorem}\label{thm:m1c1}
Let $x_0 \in \mathbb{R}^d$ be fixed. Assuming that the matrix $A$ in the SDE \eqref{eq:SDEs} has $d$ distinct eigenvalues. The generator of the SDE \eqref{eq:SDEs} is identifiable from $x_0$ if and only if
\begin{equation}\label{eq:identify_con}
    \textnormal{rank}([x_0|A x_0|\ldots|A^{d-1}x_0|H_{\cdot 1}|AH_{\cdot 1}|\ldots|A^{d-1}H_{\cdot 1}|\ldots|H_{\cdot d}|AH_{\cdot d}|\ldots|A^{d-1}H_{\cdot d}])=d\,,
\end{equation}
where $H:=GG^T$, and $H_{\cdot j}$ stands for the $j$-th column vector of matrix $H$, for all $j=1,\cdots, d$.
\end{theorem}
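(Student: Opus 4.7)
The strategy is to translate distributional equality into algebraic identities on the iterates $A^k x_0$ and $A^k H_{\cdot j}$ via Lemma~\ref{lemma:law same} and the formulas in \eqref{eq:mean covariance}, then close both directions using either a spanning argument or the $A$-invariant proper subspace produced by Lemma~\ref{lemma:w_jk=0}. By Lemma~\ref{lemma:law same}, $X(\cdot;x_0,A,G)\stackrel{\mathrm{d}}{=}X(\cdot;x_0,\tilde A,\tilde G)$ is equivalent to $e^{At}x_0=e^{\tilde At}x_0$ and $e^{Ah}V(t)=e^{\tilde Ah}\tilde V(t)$ for all $t,h\geq 0$, where $V(t)=\int_0^t e^{As}He^{A^\top s}\,ds$. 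Differentiating the mean identity in $t$ at $0$ yields $A^k x_0=\tilde A^k x_0$ for all $k\geq 0$. Setting $h=0$ gives $V(t)=\tilde V(t)$, and differentiating at $t=0$ yields $H=\tilde H$. Substituting $H=\tilde H$ back, the cross-covariance identity becomes $e^{Ah}V(t)=e^{\tilde Ah}V(t)$; repeated differentiation in $h$ at $0$ and then in $t$ at $0$ (using $V'(0)=H$) gives $A^k H_{\cdot j}=\tilde A^k H_{\cdot j}$ for all admissible $j,k$. These implications are reversible, so distributional equality is equivalent to the conjunction $H=\tilde H$, $A^k x_0=\tilde A^k x_0$, and $A^k H_{\cdot j}=\tilde A^k H_{\cdot j}$.

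\emph{Sufficiency.} Assume \eqref{eq:identify_con}. By Cayley--Hamilton the set $\{A^k x_0,A^k H_{\cdot j}:0\leq k\leq d-1,\,1\leq j\leq d\}$ spans the same subspace as all higher iterates, and by hypothesis this span is $\mathbb{R}^d$. For any vector of the form $v=A^k w$ with $w\in\{x_0,H_{\cdot j}\}$, the identities above give $\tilde A v=\tilde A^{k+1}w=A^{k+1}w=Av$, so $A$ and $\tilde A$ agree on a spanning set, hence $A=\tilde A$. Combined with $H=\tilde H$ this yields generator identifiability.

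\emph{Necessity.} Assume the rank fails. Apply Lemma~\ref{lemma:w_jk=0} with $n=d+1$ to the vectors $x_0,H_{\cdot 1},\ldots,H_{\cdot d}$ to obtain an index $k$ such that all of them lie in an $A$-invariant proper subspace $L_{-k}$. Let $P_k$ be the spectral projection of $A$ onto $L_k$ along $L_{-k}$: it is a real polynomial in $A$, so $[A,P_k]=0$; it satisfies $P_k x_0=0$ and $P_k H=0$ because $\mathrm{Im}(H)\subseteq L_{-k}=\ker P_k$; and by symmetry of $H$, $HP_k^\top=(P_k H)^\top=0$. Set $\tilde A:=A+cP_k$ for any real $c\neq 0$ and $\tilde G:=G$. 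Since $P_k\neq 0$ we have $\tilde A\neq A$, and commutativity together with $P_k^2=P_k$ gives $e^{\tilde At}=e^{At}\bigl(I+(e^{ct}-1)P_k\bigr)$. The vanishing relations $P_k x_0=0$, $P_k H=0$, $HP_k^\top=0$ then imply $e^{\tilde At}x_0=e^{At}x_0$ and, expanding the sandwich, $e^{\tilde At}He^{\tilde A^\top t}=e^{At}He^{A^\top t}$; since the columns of $V(t)$ also lie in $L_{-k}$ (by $A$-invariance of $L_{-k}$ and $\mathrm{Im}(H)\subseteq L_{-k}$), $e^{\tilde Ah}V(t)=e^{Ah}V(t)$ as well. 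By the equivalence from the first paragraph the two processes share the same distribution, while $(\tilde A,\tilde G\tilde G^\top)\neq(A,GG^\top)$, so the generator is not identifiable from $x_0$.

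The step I expect to be the main obstacle is the necessity direction, and specifically verifying that the perturbation $cP_k$ cancels on both sides of the sandwich $e^{At}He^{A^\top t}$ simultaneously; this forces use of the symmetry $H=H^\top$ together with both $P_kH=0$ and $HP_k^\top=0$. Once the spectral projection is in hand the argument is uniform across the real ($k\leq K_1$) and complex ($k>K_1$) Jordan blocks, because $P_k$ is real regardless of the block type.
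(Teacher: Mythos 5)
Your proposal is correct and takes essentially the same route as the paper: sufficiency via differentiating the mean and covariance to get $A^k x_0=\tilde A^k x_0$ and $A^k H=\tilde A^k H$ and then matching $A,\tilde A$ on a spanning set, and necessity via Lemma~\ref{lemma:w_jk=0} followed by perturbing $A$ in the invisible invariant direction. Your $\tilde A=A+cP_k$ is exactly the paper's $A_2=A_1+QDQ^{-1}$ (with $D=cE_{kk}$ one has $QDQ^{-1}=cP_k$); your idempotent spectral-projection computation merely packages the paper's explicit $e^{D(t-s)}$ verification a bit more uniformly across the real and complex blocks.
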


The proof of Theorem \ref{thm:m1c1} can be found in Appendix \ref{proof of thm:m1c1}. The condition in Theorem \ref{thm:m1c1} is both sufficient and necessary when the matrix $A$ has distinct eigenvalues. It is worth noting that almost every $A \in \mathbb{R}^{d\times d}$ has $d$ distinct eigenvalues concerning the Lebesgue measure on $\mathbb{R}^{d\times d}$. Hence, this condition is both sufficient and necessary for almost every $A$ in $\mathbb{R}^{d\times d}$. However, in cases where $A$ has repetitive eigenvalues, this condition is solely sufficient and not necessary.

\textbf{Remark.} The identifiability condition stated in Theorem \ref{thm:m1c1} is generic, that is, let 
\begin{equation*}
   S:=\{(x_0, A, G)\in \mathbb{R}^{d+d^2+dm}: \mbox{condition \eqref{eq:identify_con} is violated}\} \,,
\end{equation*}
$S$ has Lebesgue measure zero in $\mathbb{R}^{d+d^2+dm}$. Refer to Appendix \ref{sec:generic condition for m1} for the detailed proof.

From the geometric perspective, suppose matrix $A$ has distinct eigenvalues, the generator of the SDE \eqref{eq:SDEs} is identifiable from $x_0$ when not all of the vectors: $x_0, H_{\cdot 1}, \ldots, H_{\cdot d} $ are confined to an $A$-invariant \textbf{proper} subspace of $\mathbb{R}^d$. A key finding is that when all the vectors $H_{\cdot j}$, $j=1,\ldots, d$ are confined to an $A$-invariant proper subspace $L$ of $\mathbb{R}^d$, each column of the covariance matrix $V(t)$ in Equation \eqref{eq:mean covariance} is also confined to $L$, for all $0 \leqslant t < \infty$. Thus, the identifiability of the generator of the SDE \eqref{eq:SDEs} can be fully determined by $x_0$ and the system parameters $(A, GG^{\top})$. Further details can be found in the proof \ref{proof of thm:m1c1} of Theorem \ref{thm:m1c1}.

By rearranging the matrix in \eqref{eq:identify_con}, the identifiability condition can also be expressed as 
\begin{equation}\label{eq:identify_con2}
    \text{rank}([x_0|Ax_0|\ldots|A^{d-1}x_0|GG^{\top}|AGG^{\top}|\ldots|A^{d-1}GG^{\top}]) = d\,.
\end{equation}
Based on the identifiability condition \eqref{eq:identify_con2}, we derive the following corollary.
\begin{corollary}\label{col:m1c2}
Let $x_0 \in \mathbb{R}^d$ be fixed. If $\textnormal{rank}([G|AG|\ldots|A^{d-1}G]) = d$, then the generator of the SDE \eqref{eq:SDEs} is identifiable from $x_0$ .
\end{corollary}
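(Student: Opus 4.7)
The plan is to reduce Corollary \ref{col:m1c2} directly to the equivalent form \eqref{eq:identify_con2} of the main identifiability criterion in Theorem \ref{thm:m1c1}. Since adjoining the columns $x_0, Ax_0, \ldots, A^{d-1}x_0$ can only increase the rank of a matrix, it suffices to establish that
\begin{equation*}
    \textnormal{rank}\bigl([GG^{\top}|AGG^{\top}|\ldots|A^{d-1}GG^{\top}]\bigr) = d
\end{equation*}
whenever $\textnormal{rank}([G|AG|\ldots|A^{d-1}G]) = d$. Once this is shown, the full matrix in \eqref{eq:identify_con2} also has rank $d$, and Theorem \ref{thm:m1c1} gives identifiability of the generator from $x_0$.

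To prove the rank equality, I would work at the level of column spaces. The key linear-algebra fact I would invoke is that for any real matrix $G \in \mathbb{R}^{d\times m}$,
\begin{equation*}
    \textnormal{col}(GG^{\top}) = \textnormal{col}(G).
\end{equation*}
The inclusion $\textnormal{col}(GG^{\top}) \subseteq \textnormal{col}(G)$ is immediate. For the reverse, I would show $\ker(GG^{\top}) = \ker(G^{\top})$: if $GG^{\top}x = 0$ then $\|G^{\top}x\|^2 = x^{\top}GG^{\top}x = 0$, hence $G^{\top}x = 0$, and the converse is trivial. Taking orthogonal complements yields $\textnormal{col}(GG^{\top}) = \ker(G^{\top})^{\perp} = \textnormal{col}(G)$.

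Combined with $A$-invariance, this identity propagates through all powers: the column space of $[GG^{\top}|AGG^{\top}|\ldots|A^{d-1}GG^{\top}]$ coincides with the smallest $A$-invariant subspace of $\mathbb{R}^d$ containing $\textnormal{col}(GG^{\top}) = \textnormal{col}(G)$, and the same characterization applies to $[G|AG|\ldots|A^{d-1}G]$. Therefore these two column spaces are equal, and full rank of one forces full rank of the other.

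I do not expect a genuine obstacle here: the corollary is essentially a clean linear-algebra consequence of Theorem \ref{thm:m1c1} together with the standard identity $\textnormal{col}(GG^{\top}) = \textnormal{col}(G)$. The only subtle point worth stating explicitly is that the $A$-invariant subspaces generated by $\textnormal{col}(G)$ and by $\textnormal{col}(GG^{\top})$ coincide, since this is what lets us pass from the Kalman-style controllability matrix of $(A,G)$ to that of $(A,GG^{\top})$ without loss of rank.
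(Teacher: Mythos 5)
Your proposal is correct and follows essentially the same route as the paper's first proof of this corollary: reduce to the rearranged condition \eqref{eq:identify_con2} and observe that $[G|AG|\ldots|A^{d-1}G]$ and $[GG^{\top}|AGG^{\top}|\ldots|A^{d-1}GG^{\top}]$ span the same subspace. The only difference is that the paper cites a lemma of Zakai for this span equality, whereas you prove it directly via $\ker(GG^{\top})=\ker(G^{\top})$ and the invariant-subspace characterization, which is a valid and self-contained substitute; the paper also offers an alternative argument via nonsingularity of the covariance $V(t)$ under controllability, which you do not need.
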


The proof of Corollary \ref{col:m1c2} can be found in Appendix \ref{proof of col:m1c2}. This corollary indicates that the generator of the SDE \eqref{eq:SDEs} is identifiable from \textbf{any} initial state $x_0 \in \mathbb{R}^d$ when the pair $[A, G]$ is controllable ($\textnormal{rank}([G|AG|\ldots|A^{d-1}G]) = d$). Notably, this identifiability condition is stricter than that proposed in Theorem \ref{thm:m1c1}, as it does not use the information of $x_0$.

\subsection{Conditions for identifying generators of linear SDEs with multiplicative noise}

Expressing the SDE \eqref{eq:SDEs2} in the form given by \eqref{eq:diffusion process} yields $b(x) = Ax$ and $\sigma(x) = [G_1x | \ldots |G_mx] \in \mathbb{R}^{d\times m}$, thus, $c(x) = \sigma(x) \sigma(x)^{\top} = \sum_{k=1}^m G_k xx^{\top} G_k^{\top}$. 
Let $X(t; x_0, A, \{G_k\}_{k=1}^m)$ denote the solution to the SDE \eqref{eq:SDEs2}, then based on Proposition \ref{theorem:generator same}, we define the identifiability of the generator of the SDE \eqref{eq:SDEs2} as follows.

\begin{definition}[$(x_0, A, \{G_k\}_{k=1}^m)$-identifiability]\label{def:identifiability of SDE2}
For $x_0 \in \mathbb{R}^d, A, G_k\in \mathbb{R}^{d\times d}$ for all $k=1,\ldots, m$, the generator of the SDE \eqref{eq:SDEs2} is said to be identifiable from $x_0$, if for all $\tilde{A}, \tilde{G}_k\in \mathbb{R}^{d\times d}$, there exists an $x\in \mathbb{R}^d$, such that $(A, \sum_{k=1}^mG_kxx^{\top}G_k^{\top}) \neq (\tilde{A}, \sum_{k=1}^m\tilde{G}_kxx^{\top}\tilde{G}_k^{\top})$, it holds that $X(\cdot; x_0, A, \{G_k\}_{k=1}^m) \overset{\textnormal{d}}{\neq} X(\cdot; x_0, \tilde{A}, \{\tilde{G}_k\}_{k=1}^m)$.
\end{definition}

Based on Definition \ref{def:identifiability of SDE2}, we present the identifiability condition for the generator of the SDE \eqref{eq:SDEs2}.

\begin{theorem}\label{thm:m2c1}
Let $x_0 \in \mathbb{R}^d$ be fixed. The generator of the SDE \eqref{eq:SDEs2} is identifiable from $x_0$ if the following conditions are satisfied:
\begin{enumerate}
    \item[\textnormal{A1}] $\textnormal{rank}([x_0|A x_0|\ldots|A^{d-1}x_0])=d$\,,
    \item[\textnormal{A2}] $\textnormal{rank}([v|\mathcal{A} v|\ldots|\mathcal{A}^{(d^2+d-2)/2}v])=(d^2+d)/2$\,,
\end{enumerate}
where $\mathcal{A} = A \oplus A + \sum_{k=1}^m G_k \otimes G_k \in \mathbb{R}^{d^2 \times d^2}$, $\oplus$ denotes Kronecker sum and $\otimes$ denotes Kronecker product, $v$ is a $d^2$-dimensional vector defined by $v := \textnormal{vec}(x_0x_0^{\top})$, where $\textnormal{vec}(M)$ denotes the vectorization of matrix $M$.
\end{theorem}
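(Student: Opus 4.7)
The overall plan is to argue by contrapositive: assume the observational distributions under $(A,\{G_k\})$ and $(\tilde A,\{\tilde G_k\})$ are equal, and deduce first that $A=\tilde A$ (using A1), and second that $\sum_k G_k x x^\top G_k^\top=\sum_k\tilde G_k x x^\top\tilde G_k^\top$ for every $x\in\mathbb{R}^d$ (using A2). The bridge from distributional equality to a matrix-algebraic statement is the first- and second-moment functions, both of which satisfy linear ODEs driven by $A$ and by $\mathcal{A}$ respectively; unlike the additive case in Lemma~\ref{lemma:law same}, the solution is no longer Gaussian, so we cannot rely on mean/covariance matching alone to reconstruct the whole law, but we only need to go \emph{one} direction (distribution equality $\Rightarrow$ moment equality), which is automatic.

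First I would derive the moment equations. Applying It\^o's formula to $X_t$ and to $X_tX_t^\top$, taking expectations (legitimate since linear SDEs with multiplicative noise admit solutions with uniformly bounded moments on compact intervals), one obtains
\begin{equation*}
\dot m(t)=A m(t),\ m(0)=x_0,\qquad
\dot P(t)=AP(t)+P(t)A^\top+\textstyle\sum_k G_k P(t)G_k^\top,\ P(0)=x_0x_0^\top,
\end{equation*}
where $m(t):=\mathbb{E}[X_t]$ and $P(t):=\mathbb{E}[X_tX_t^\top]$. Vectorising via $\mathrm{vec}(UMV)=(V^\top\otimes U)\mathrm{vec}(M)$ turns the second equation into $\tfrac{d}{dt}\mathrm{vec}(P(t))=\mathcal{A}\,\mathrm{vec}(P(t))$, yielding $m(t)=e^{At}x_0$ and $\mathrm{vec}(P(t))=e^{\mathcal{A}t}v$. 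Equality in distribution forces equality of every marginal law, hence in particular matching first and second moments, giving $e^{At}x_0=e^{\tilde A t}x_0$ and $e^{\mathcal{A}t}v=e^{\tilde{\mathcal A}t}v$ for every $t\ge 0$.

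Next I would exploit A1. Differentiating $e^{At}x_0=e^{\tilde A t}x_0$ in $t$ and evaluating at $0$ iteratively gives $A^k x_0=\tilde A^k x_0$ for all $k\ge 0$, so $A-\tilde A$ annihilates $\mathrm{span}\{A^k x_0:k\ge 0\}$, which by Cayley--Hamilton equals $\mathrm{span}\{x_0,Ax_0,\ldots,A^{d-1}x_0\}=\mathbb{R}^d$ under A1, hence $A=\tilde A$. For the diffusion part, the crucial preliminary observation is that $\mathcal{A}$ preserves the subspace $\mathcal{S}\subset\mathbb{R}^{d^2}$ of vectorisations of symmetric matrices: the Kronecker-sum piece sends $\mathrm{vec}(M)\mapsto\mathrm{vec}(AM+MA^\top)$ and each Kronecker-square piece sends $\mathrm{vec}(M)\mapsto\mathrm{vec}(G_kMG_k^\top)$, both of which preserve symmetry. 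Since $v\in\mathcal{S}$, the Krylov sequence $\{\mathcal{A}^j v\}_{j\ge 0}$ lies in the $(d^2+d)/2$-dimensional space $\mathcal{S}$, and A2 states exactly that this sequence spans all of $\mathcal{S}$.

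Applying the same differentiation trick to $e^{\mathcal{A}t}v=e^{\tilde{\mathcal A}t}v$ yields $(\mathcal{A}-\tilde{\mathcal A})\mathcal{A}^k v=0$ for every $k$, so $\mathcal{A}-\tilde{\mathcal A}$ vanishes on $\mathcal{S}$. Since $A=\tilde A$ is already in hand, $\mathcal{A}-\tilde{\mathcal A}=\sum_k G_k\otimes G_k-\sum_k\tilde G_k\otimes\tilde G_k$, and its vanishing on $\mathcal{S}$ translates, via $(G_k\otimes G_k)\mathrm{vec}(M)=\mathrm{vec}(G_kMG_k^\top)$, into $\sum_k G_k M G_k^\top=\sum_k\tilde G_k M\tilde G_k^\top$ for every symmetric $M$; specialising $M=xx^\top$ closes the argument. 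I expect the main obstacle to be the bookkeeping around $\mathcal{S}$: one has to check carefully that $\mathcal{A}$ leaves $\mathcal{S}$ invariant (so that A2 is the properly-sized rank condition rather than a full-rank condition on $\mathbb{R}^{d^2}$, which would in fact be unachievable), and that the resulting identity on symmetric $M$ really covers all rank-one $xx^\top$, which it does trivially since rank-one symmetric matrices are symmetric.
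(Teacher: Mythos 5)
Your proposal is correct, and its first half (moment ODEs via It\^o's formula, vectorisation to $\dot{\mathrm{vec}}(P)=\mathcal{A}\,\mathrm{vec}(P)$, and the derivative-at-zero argument turning $e^{At}x_0=e^{\tilde A t}x_0$ plus A1 into $A=\tilde A$) coincides with the paper's. Where you genuinely diverge is in how A2 is cashed in. The paper stays in the time domain: from $A=\tilde A$ and $P_1\equiv P_2$ it extracts $\sum_k G_k P(t)G_k^\top=\sum_k\tilde G_k P(t)\tilde G_k^\top$ for all $t$, then invokes a lemma of Stanhope et al.\ to produce $(d^2+d)/2$ times $t_1,\dots,t_{(d^2+d)/2}$ with linearly independent $\mathrm{vec}(P(t_i))$ (shown to exist via a contradiction argument that the orbit $\gamma(\mathcal{A},v)$ is not confined to a proper subspace of $W$, carried out with the reduced vectors $\underline{v},\underline{\mathcal{A}v},\dots$), and finally writes $xx^\top$ as a combination of the $P(t_i)$. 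You instead differentiate $e^{\mathcal{A}t}v=e^{\tilde{\mathcal{A}}t}v$ at $t=0$ to get $(\mathcal{A}-\tilde{\mathcal{A}})\mathcal{A}^k v=0$ for all $k$, observe that $\mathcal{A}$ preserves the symmetric subspace $\mathcal{S}$ so that A2 says the Krylov vectors already span all of $\mathcal{S}$, and conclude that $\mathcal{A}-\tilde{\mathcal{A}}=\sum_k G_k\otimes G_k-\sum_k\tilde G_k\otimes\tilde G_k$ annihilates every $\mathrm{vec}(M)$ with $M$ symmetric, in particular $M=xx^\top$. The two finishes are equivalent in substance (the orbit span and the Krylov span of $v$ are the same $\mathcal{A}$-invariant subspace), but yours is more direct: it bypasses the auxiliary existence-of-times lemma and the paper's somewhat fiddly reduction to the deduplicated $(d^2+d)/2$-dimensional coordinates, at the cost of nothing beyond the (correctly verified) observation that $\mathcal{S}$ is $\mathcal{A}$-invariant.
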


The proof of Theorem \ref{thm:m2c1} can be found in Appendix \ref{proof of thm:m2c1}. This condition is only sufficient but not necessary. Specifically, condition A1 guarantees that matrix $A$ is identifiable, and once $A$ is identifiable, condition A2 ensures that the identifiability of $\sum_{k=1}^mG_kxx^{\top}G_k^{\top}$ holds for all $x\in \mathbb{R}^d$. 

\textbf{Remark.} The identifiability condition stated in Theorem \ref{thm:m2c1} is generic, that is, let
\begin{equation*}
    S:=\{(x_0, A, \{G_k\}_{k=1}^m)\in \mathbb{R}^{d+(m+1)d^2}: \mbox{ either condition \textnormal{A1} or \textnormal{A2} in Theorem \ref{thm:m2c1} is violated}\}\,,
\end{equation*}
$S$ has Lebesgue measure zero in $\mathbb{R}^{d+(m+1)d^2}$. This signifies that the conditions are satisfied for most of the combinations of $x_0$, $A$ and $G_k$'s, except for those that lie in a set of Lebesgue measure zero. The corresponding proposition and detailed proof can be found in Appendix \ref{sec:generic conditions}. 

Since obtaining an explicit solution for the SDE \eqref{eq:SDEs2} is generally infeasible, we resort to utilizing the first- and second-order moments of this SDE to derive the identifiability conditions. Let $m(t) := \mathbb{E}[X_t]$ and $P(t) := \mathbb{E}[X_tX_t^{\top}]$, it is known that these moments satisfy ODE systems. Specifically, $m(t)$ satisfies the ODE \eqref{eq:ODEs}, while $P(t)$ satisfies the following ODE (cf. \cite{socha2007linearization}):
\begin{equation}\label{eq:secondmoment1}
    \Dot{P}(t) =  A P(t) +P(t)A^{\top}  + \textstyle\sum_{k=1}^m G_{k} P(t) G_{k}^{\top}\,,\ \ \ P(0) = x_0 x_0^{\top}\,.
\end{equation}
An important trick to deal with the ODE \eqref{eq:secondmoment1} is to vectorize $P(t)$, then it can be expressed as:
\begin{equation}\label{eq:ODEs2}
\text{vec}(\dot{P}(t)) = \mathcal{A} \text{vec}(P(t))\,, \ \ \ \text{vec}(P(0)) = v\,,
\end{equation}
where $\mathcal{A}$ and $v$ are defined in Theorem \ref{thm:m2c1}. In fact, the ODE \eqref{eq:ODEs2} follows the same mathematical structure as that of the ODE \eqref{eq:ODEs}, which is known as homogeneous linear ODEs. Thus, in addition to the inherent properties of the SDE \eqref{eq:SDEs2},
we also employ some existing identifiability theories for homogeneous linear ODEs to establish the identifiability condition for the generator of the SDE \eqref{eq:SDEs2}.

From the geometric perspective, condition A1 indicates that the initial state $x_0$ is not confined to an $A$-invariant \textbf{proper} subspace of $\mathbb{R}^d$ \cite[Lemma 3.1.]{stanhope2014identifiability}. And condition A2 implies that the vectorization of $x_0 x_0^{\top}$ is not confined to an $\mathcal{A}$-invariant \textbf{proper} subspace of $W$, with $W \subset \mathbb{R}^{d^2}$, and $\text{dim}(W) = (d^2+d)/2$, where $\text{dim}(W)$ denotes the dimension of the subspace $W$, that is the number of vectors in any basis for $W$. In particular, one can construct a basis for $W$ as follows: 
\begin{equation*}
    \{\text{vec}(E_{11}), \text{vec}(E_{21}), \text{vec}(E_{22}), \ldots, \text{vec}(E_{dd})\}\,,
\end{equation*}
where $E_{ij}$ denotes a $d\times d$ matrix whose $ij$-th and $ji$-th elements are $1$, and all other elements are $0$, for all $i,j = 1, \ldots, d$ and $i \geqslant j$. Refer to the proof \ref{proof of thm:m2c1} of Theorem \ref{thm:m2c1} for more details.

\section{Simulations and examples}\label{sec:simulation}
In order to assess the validity of the identifiability conditions established in Section \ref{sec:main results}, we present the results of simulations. Specifically, we consider SDEs with system parameters that either satisfy or violate the proposed identifiability conditions. We then apply the maximum likelihood estimation (MLE) method to estimate the system parameters from discrete observations sampled from the corresponding SDE. The accuracy of the resulting parameter estimates serves as an indicator of the validity of the proposed identifiability conditions.

\textbf{Simulations.} We conduct five sets of simulations, which include one identifiable case and one unidentifiable case for the SDE \eqref{eq:SDEs}, and one identifiable case and two unidentifiable cases with either condition A1 or A2 in Theorem \ref{thm:m2c1} unsatisfied for the SDE \eqref{eq:SDEs2}. We set both the system dimension, $d$, and the Brownian motion dimension, $m$, to 2. Details on the true underlying system parameters for the SDEs can be found in Appendix \ref{appendix:simulation settings}. We simulate observations from the true SDEs for each of the five cases under investigation. Specifically, the simulations are carried out for different numbers of trajectories ($N$), with 50 equally-spaced observations sampled on each trajectory from the time interval $[0,1]$. We employ the Euler-Maruyama (EM) method \cite{maruyama1955continuous}, a widely used numerical scheme for simulating SDEs, to generate the observations. 

\textbf{Estimation.} We use MLE \cite{nielsen2000parameter, sarkka2019applied} to estimate the system parameters. The MLE method requires knowledge of the transition probability density function (pdf) that governs the evolution of the system. For the specific case of the SDE \eqref{eq:SDEs}, the transition density follows a Gaussian distribution, which can be computed analytically based on the system's drift and diffusion coefficients (cf. \cite{sarkka2019applied}).
To compute the covariance, we employ the commonly used matrix fraction decomposition method \cite{axelsson2014discrete, sarkka2006recursive,  sarkka2019applied}. However, in general, the transition pdf of the SDE \eqref{eq:SDEs2} cannot be obtained analytically due to the lack of a closed-form solution. To address this issue, we implement the Euler-Maruyama approach \cite{ mao2015truncated, maruyama1955continuous}, which has been shown to be effective in approximating the transition pdf of SDEs.

\textbf{Metric.} We adopt the commonly used metric, mean squared error (MSE), to assess the accuracy of the parameter estimates. To ensure reliable estimation outcomes, we perform 100 independent random replications for each configuration and report the mean and variance of their MSEs.

\begin{table}[hbt!]
  \caption{Simulation results of the SDE \eqref{eq:SDEs}}
  \label{table:simulation results of model1}
  \centering
  \fontsize{9}{10.8}\selectfont
  \begin{tabular}{ccccc}
    \toprule
    \multicolumn{1}{c}{\multirow{2}{*}{$\boldsymbol{N}$}} & \multicolumn{2}{c}{\textbf{Identifiable}} & \multicolumn{2}{c}{\textbf{Unidentifiable}}\\
    \cmidrule(lr){2-3} \cmidrule(l){4-5}
       & MSE-$A$ & MSE-$GG^{\top}$ & MSE-$A$ & MSE-$GG^{\top}$ \\
    \midrule
    5   & $0.0117 \pm 0.0115$ & $5.28\text{E-}05 \pm 4.39\text{E-}05$ & $3.66 \pm 0.10$ & $0.05  \pm 0.03$ \\
    10  & $0.0063 \pm 0.0061$ & $2.39\text{E-}05 \pm 1.82\text{E-}05$ & $3.88 \pm 0.06$ & $0.64  \pm 0.59$ \\
    20  & $0.0029 \pm 0.0027$ & $1.87\text{E-}05 \pm 1.51\text{E-}05$ & $3.70 \pm 0.06$ & $0.09   \pm 0.07$ \\
    50  & $0.0013 \pm 0.0010$ & $8.00\text{E-}06 \pm 5.68\text{E-}06$ & $3.76 \pm 0.07$ & $0.11  \pm 0.08$ \\
    100 & $0.0007 \pm 0.0004$ & $4.34\text{E-}06 \pm 2.70\text{E-}06$ & $3.66 \pm 0.02$ & $2.09  \pm 1.98$ \\
    \bottomrule
  \end{tabular}
\end{table}

\textbf{Results analysis.} Table \ref{table:simulation results of model1} and Table \ref{table:simulation results of model2} present the simulation results for the SDE \eqref{eq:SDEs} and the SDE \eqref{eq:SDEs2}, respectively. In Table \ref{table:simulation results of model1}, the simulation results demonstrate that in the identifiable case, as the number of trajectories $N$ increases, the MSE for both $A$ and $GG^{\top}$ decreases and approaches zero. However, in the unidentifiable case, where the identifiable condition \eqref{eq:identify_con} stated in Theorem \ref{thm:m1c1} is not satisfied, the MSE for both $A$ and $GG^{\top}$ remains high regardless of the number of trajectories. These findings provide strong empirical evidence supporting the validity of the identifiability condition proposed in Theorem \ref{thm:m1c1}. The simulation results presented in Table \ref{table:simulation results of model2} show that in the identifiable case, the MSE for both $A$ and $Gsx$ decreases and approaches zero with the increase of the number of trajectories $N$ . Here, $Gsx := \sum_{k=1}^m G_k xx^{\top}G_k^{\top}$, where $x$ is a randomly generated vector from $\mathbb{R}^2$ (in these simulations, $x=[1.33, 0.72]^{\top}$). Interestingly, even in unidentifiable case 1, the MSE for both $A$ and $Gsx$ decreases with an increasing number of trajectories $N$, indicating that the generator of the SDE utilized in this particular case is still identifiable, although a larger number of trajectories is required compared to the identifiable case to achieve the same level of accuracy. This result is reasonable, because it aligns with our understanding that condition A1 is only sufficient but not necessary for identifying $A$, as the lack of an explicit solution for the SDE \eqref{eq:SDEs2} results in condition A1 not incorporating any information from $G_k$'s. The identifiability condition derived for the SDE \eqref{eq:SDEs} in Theorem \ref{thm:m1c1} leverages the information of $G$, similarly, if information regarding $G_k$'s is available, a weaker condition for identifying $A$ could be obtained. For illustration, in Appendix \ref{sec:appendix c}, we present such a condition assuming the SDE \eqref{eq:SDEs2} has a closed-form solution. In the case of unidentifiable case 2, the MSE for $A$ decreases with an increasing number of trajectories $N$; however, the MSE for $Gsx$ remains high, indicating that $A$ is identifiable, while $Gsx$ is not, albeit requiring more trajectories compared to the identifiable case to achieve the same level of accuracy of $A$ (since the $Gsx$ is far away from its true underlying value). This finding is consistent with the derived identifiability condition, as condition A1 is sufficient to identify $A$, whereas condition A2 governs the identifiability of $Gsx$. Worth noting that in cases where neither condition A1 nor condition A2 is satisfied, the estimated parameters barely deviate from their initial values, implying poor estimation of both $A$ and $Gsx$. These results indicate the validity of the identifiability condition stated in Theorem \ref{thm:m2c1}.

\begin{table}[hbt!]
  \caption{Simulation results of the SDE \eqref{eq:SDEs2}}
  \label{table:simulation results of model2}
  \centering
  \fontsize{8.5}{10.2}\selectfont
  \begin{tabular}{ccccccc}
    \toprule
    \multicolumn{1}{c}{} & \multicolumn{2}{c}{\multirow{2}{*}{\textbf{Identifiable}}} & \multicolumn{4}{c}{\textbf{Unidentifiable}}\\
    \cmidrule(l){4-7}
    \multicolumn{1}{c}{\multirow{2}{*}{$\boldsymbol{N}$}} &\multicolumn{2}{c}{} & \multicolumn{2}{c}{\textbf{case1: A1-False, A2-True}} & \multicolumn{2}{c}{\textbf{case2: A1-True, A2-False}}\\
    \cmidrule(lr){2-3} \cmidrule(lr){4-5}\cmidrule(l){6-7}
       & MSE-$A$ & MSE-$Gsx$ & MSE-$A$ & MSE-$Gsx$ & MSE-$A$ & MSE-$Gsx$\\
    \midrule
    10  & $0.069 \pm 0.061$ & $0.3647 \pm 0.3579$ & $0.509 \pm 0.499$ & $0.194 \pm 0.140$ & $2.562 \pm 2.522$ & $9763 \pm 8077$\\
    20  & $0.047 \pm 0.045$ & $0.1769 \pm 0.1694$ & $0.195 \pm 0.180$ & $0.088 \pm 0.058$ & $0.967 \pm 0.904$ & $8353 \pm 6839$\\
    50  & $0.018 \pm 0.018$ & $0.1703 \pm 0.1621$ & $0.132 \pm 0.131$ & $0.081 \pm 0.045$ & $0.423 \pm 0.410$ & $4779 \pm 4032$\\
    100 & $0.006 \pm 0.006$ & $0.0015 \pm 0.0012$ & $0.065 \pm 0.065$ & $0.068 \pm 0.036$ & $0.207 \pm 0.198$ & $3569 \pm 3150$\\
    500 & $0.001 \pm0.001 $ & $0.0004 \pm 0.0001$ & $0.008 \pm 0.008$ & $0.059 \pm 0.004$ & $0.046 \pm 0.046$ & $4490 \pm 3991$\\
    \bottomrule
  \end{tabular}
\end{table}

\textbf{Illustrative instances of causal inference for linear SDEs (with interventions).} To illustrate how our proposed identifiability conditions can guarantee reliable causal inference for linear SDEs, we present examples corresponding to both the SDE \eqref{eq:SDEs} and the SDE \eqref{eq:SDEs2}. In these examples, we show that under our proposed identifiability conditions, the post-intervention distributions are identifiable from their corresponding observational distributions. Please refer to Appendix \ref{eg:SDE1} and \ref{eg:SDE2} for the details of the examples.

\section{Related work}
Most current studies on the identifiability analysis of SDEs are based on the Gaussian diffusion processes that conform to the form described in the SDE \eqref{eq:SDEs}. In particular, the authors of \cite{kutoyants2004statistical, le1985some,  prior2017maximum} have conducted research on the identifiability or asymptotic properties of parameter estimators of Gaussian diffusions in view of continuous observations of one trajectory, and have highlighted the need for the diffusion to be ergodic.
A considerable amount of effort has also been directed towards the identifiability analysis of Gaussian diffusions, relying on the exact discrete models of the SDEs \cite{blevins2017identifying, hansen1983dimensionality, kessler2004identification, mccrorie2003problem,phillips1973problem}. Typically, these studies involve transferring the continuous-time system described in the SDE \eqref{eq:SDEs} to a discrete-time model such as a vector autoregressive model, based on equally-spaced observations sampled from one trajectory, and then attempting to determine conditions under which $(A, GG^\top)$ is identifiable from the parameters of the corresponding exact discrete models. These conditions often have requirements on eigenvalues of $A$ among other conditions, such as requiring the eigenvalues to have only negative real parts, or the eigenvalues to be strictly real. Due to the limitation of the available observations (continuous or discrete observations located on one trajectory of the SDE system), the identifiability conditions proposed in these works are restrictive.


Causal modelling theories have been well-developed based on directed acyclic graphs (DAGs), which do not explicitly incorporate a time component \cite{pearl2009causality}. In recent years, similar concepts of causality have been developed for dynamic systems operating in both discrete and continuous time. Discrete-time models, such as autoregressive processes, can be readily accommodated within the DAG-based framework \cite{eichler2007granger, eichler2010granger}. 
On the other hand, differential equations offer a natural framework for understanding causality in dynamic systems within the context of continuous-time processes \cite{aalen2012causality, scholkopf2021toward}. Consequently, considerable effort has been devoted to establishing a theoretical connection between causality and differential equations. In the deterministic case, Mooij et al. \cite{mooij2013ordinary} and Rubenstein et al. \cite{rubenstein2018deterministic} have established a mathematical link between ODEs and structural causal models (SCMs). Wang et al. \cite{wang2022identifiability} have proposed a method to infer the causal structure of linear ODEs. Turning to the stochastic case, Boogers and Mooij have built a bridge from random differential equations (RDEs) to SCMs \cite{bongers2018random}, while Hansen and Sokol have proposed a causal interpretation of SDEs by establishing a connection between SDEs and SEMs \cite{hansen2014causal}. 
\section{Conclusion and discussion}
In this paper, we present an investigation into the identifiability of the generators of linear SDEs under additive and multiplicative noise. Specifically, we derive the conditions that are fully built on system parameters and the initial state $x_0$, which enables the identification of a linear SDE's generator from the distribution of its solution process with a given fixed initial state. We establish that, under the proposed conditions, the post-intervention distribution is identifiable from the corresponding observational distribution for any Lipschitz intervention $\zeta$. 

The main limitation of our work is that the practical verification of these identifiability conditions poses a challenge, as the true underlying system parameters are typically unavailable in real-world applications. Nevertheless, our study contributes to the understanding of the intrinsic structure of linear SDEs. By offering valuable insights into the identifiability aspects, our findings empower researchers and practitioners to employ models that satisfy the proposed conditions (e.g., through constrained parameter estimation) to learn real-world data while ensuring identifiability.
We believe the paramount significance of this work lies in providing a systematic and rigorous causal interpretation of linear SDEs, which facilitates reliable causal inference for dynamic systems governed by such equations. 
It is worth noting that in our simulations, we employed the MLE method to estimate the system parameters. This necessitates the calculation of the transition pdf from one state to the successive state at each discrete temporal increment. Consequently, as the state dimension and Brownian motion dimension increase, the computational time is inevitably significantly increased, rendering the process quite time-consuming. To expedite parameter estimation for scenarios involving high dimensions, alternative estimation approaches are required. The development of a more efficient parameter estimation approach remains an important task in the realm of SDEs, representing a promising direction for our future research.
We claim that this work does not present any foreseeable negative social impact.

\section*{Acknowledgements}
YW was supported by the Australian Government Research Training Program (RTP) Scholarship from the University of Melbourne. XG was supported by ARC DE210101352. MG was supported by ARC DE210101624.

\bibliographystyle{abbrv}
\bibliography{references}

\newpage
\appendix
\part{Appendix for "Generator Identification for Linear SDEs with
Additive and Multiplicative Noise"} 
\section{Detailed proofs}

\subsection{Proof of Lemma \ref{lemma:levy process}}\label{proof of lemma:levy process}
\begin{proof}
We start by presenting the mathematical definition of a L\'evy process. (cf. \cite{sato2014stochastic})
\begin{definition}\label{def:levy process}
A stochastic process $X := \{X_t: 0 \leqslant t < \infty\}$ is said to be a L\'evy process if it satisfies the following properties:
\begin{enumerate}
    \item $X_0 = 0$ almost surely;
    \item Independence of increments: For any $0 \leqslant t_1 < t_2 < \ldots < t_n < \infty$, $X_{t_2} - X_{t_1}$, $X_{t_3} - X_{t_2}$, \ldots, $X_{t_n} - X_{t_{n-1}}$ are independent;
    \item Stationary increments: For any $s < t$, $X_t -X_s$ is equal in distribution to $X_{t-s}$;
    \item Continuity in probability: For any $\varepsilon > 0$ and $0 \leqslant t < \infty$ it holds that $\lim_{h\rightarrow 0} \mathbb{P}(|X_{t+h} - X_t| > \varepsilon) = 0$.
\end{enumerate}
\end{definition}

In the following, we first show that the SDE \eqref{eq:SDEs} can be expressed as the form of \eqref{eq:levy model1}, with $Z$ being a $p$-dimensional L\'evy process and $a: \mathbb{R}^d \rightarrow \mathbb{R}^{d\times p}$ being Lipschitz. The first equation in the SDE \eqref{eq:SDEs} can be rearranged as
\begin{equation}
\begin{split}
    dX_t &= AX_tdt + GdW_t\\
    &= \begin{bmatrix}
        AX_t & G
    \end{bmatrix} \begin{bmatrix}
         dt\\
         dW_t
    \end{bmatrix}\\
    &= a(X_t)dZ_t\,,
\end{split}  
\end{equation}
with 
\begin{equation*}
    a(X_t) = \begin{bmatrix}
        AX_t & G
    \end{bmatrix} \in \mathbb{R}^{d\times(m+1)}\,,
\end{equation*}
and
\begin{equation}\label{eq:dZ_t}
    dZ_t = 
    \begin{bmatrix}
        dt\\
        dW_t
    \end{bmatrix} = \underbrace{\begin{bmatrix}
        1 \\
        0_{m\times 1}
    \end{bmatrix}}_{r}dt + \underbrace{\begin{bmatrix}
        0_{1 \times 1}  & 0_{1\times m}\\
        0_{m \times 1} & I_{m \times m}    
    \end{bmatrix}}_E\begin{bmatrix}
        dW_{0,t} \\
        dW_t
    \end{bmatrix}\,,
\end{equation}
where $0_{i\times j}$ denotes an $i\times j$ zero matrix, let $\tilde{W} := \{\tilde{W}_t: 0 \leqslant t < \infty \}$ with $\tilde{W}_t = [W_{0,t}, W_{1,t}, \ldots, W_{m,t}]^{\top}$ denote a $(m+1)$-dimensional standard Brownian motion, then one can find a process $Z:= \{Z_t:0 \leqslant t < \infty\}$ with 
\begin{equation}\label{eq:Z_t}
\begin{split}
    Z_t &= rt +E\tilde{W}_t\,,\\
    Z_0 &= 0\,,    
\end{split}
\end{equation}
satisfying $dZ_t$ described in Equation \eqref{eq:dZ_t}. Then we will show that the process $Z$ described in \eqref{eq:Z_t} is a L\'evy process, that is, it satisfies the four properties stated in Definition \ref{def:levy process}.

\textbf{Property 1:} The first property is readily checked since $Z_0 = 0$.

\textbf{Property 2:} For any $0 \leqslant t_1 < t_2 <  t_3 < \infty$, 
\begin{equation*}
\begin{split}
    Z_{t_2} - Z_{t_1} &=  r(t_2 - t_1) + E(\tilde{W}_{t_2} - \tilde{W}_{t_1})\\
    &= \begin{bmatrix}
        t_2 -t_1 \\ 0_{m\times 1}
    \end{bmatrix} + \begin{bmatrix}
        0 \\ W_{t_2} -W_{t_1}
    \end{bmatrix}\,.
\end{split}
\end{equation*}
Similarly, 
\begin{equation*}
    Z_{t_3} - Z_{t_2} = \begin{bmatrix}
        t_3 -t_2 \\ 0_{m\times 1}
    \end{bmatrix} + \begin{bmatrix}
        0 \\ W_{t_3} -W_{t_2}
    \end{bmatrix}\,.
\end{equation*}
Since $W_{t_2} - W_{t_1}$ and $W_{t_3} - W_{t_2}$ are independent, $ Z_{t_3} - Z_{t_2}$ and $Z_{t_2} - Z_{t_1}$ are independent.

\textbf{Property 3:} when $s < t$, 
\begin{equation*}
\begin{split}
    Z_{t} - Z_{s} &= \begin{bmatrix}
        t - s \\ 0_{m\times 1}
    \end{bmatrix} + \begin{bmatrix}
        0 \\ W_{t} -W_{s}
    \end{bmatrix}\\
    &\sim \mathcal{N}\Bigg(\begin{bmatrix}
        t-s \\ 0_{m \times 1}\end{bmatrix}, \begin{bmatrix}
        0_{1 \times 1}  & 0_{1\times m}\\
        0_{m \times 1} & (t-s)I_{m \times m}    
        \end{bmatrix} \Bigg)\,.
\end{split}   
\end{equation*}
And 
\begin{equation*}
\begin{split}
    Z_{t-s} &=  r(t-s) + E \tilde{W}_{t-s}\\
    &= \begin{bmatrix}
        t - s \\ 0_{m\times 1}
    \end{bmatrix} + \begin{bmatrix}
        0 \\ W_{t-s}
    \end{bmatrix}\\
    &\sim \mathcal{N}\Bigg(\begin{bmatrix}
        t-s \\ 0_{m \times 1}\end{bmatrix}, \begin{bmatrix}
        0_{1 \times 1}  & 0_{1\times m}\\
        0_{m \times 1} & (t-s)I_{m \times m}    
        \end{bmatrix} \Bigg)\,.
\end{split}    
\end{equation*}
Therefore, property 3 is checked.

\textbf{Property 4:} Obviously, process $Z$ described in \eqref{eq:Z_t} is continuous with probability one at $t$ for all $0 \leqslant t < \infty$, therefore, $Z$ has continuity in probability.

Now that we have shown that process $Z$ is a $p$-dimensional L\'evy process with $p=m+1$. Then we will show that $a(X_t) = \begin{bmatrix}
    AX_t & G
\end{bmatrix}$ is Lipschitz.

\begin{equation*}
\begin{split}
    \parallel a(X_t) - a(X_s) \parallel_{F} &= \parallel \begin{bmatrix}
        A(X_t-X_s) & 0
    \end{bmatrix}   \parallel_F\\
    &= \parallel  A(X_t-X_s) \parallel_2\\
    &\leqslant \parallel A\parallel_F \parallel X_t -X_s\parallel_2
\end{split}
\end{equation*}
where $\parallel M \parallel_F$ denotes the Frobenius norm of matrix $M$ and $\parallel v\parallel_2$ denotes the Euclidean norm of vector $v$. Now it is readily checked that function $a: \mathbb{R}^d \rightarrow \mathbb{R}^{d\times p}$ is Lipschitz. 

Similarly, we will show that the SDE \eqref{eq:SDEs2} can also be expressed as the form of \eqref{eq:levy model1}, with $Z$ being a $p$-dimensional L\'evy process and $a: \mathbb{R}^d \rightarrow \mathbb{R}^{d\times p}$ being Lipschitz. Let us rearrange the first equation in the SDE \eqref{eq:SDEs2}:
\begin{equation*}
\begin{split}
    dX_t &= AX_t dt + \sum_{k=1}^m G_k X_t dW_{k,t}\\
    &= \begin{bmatrix}
        AX_t & G_1 X_t & \ldots & G_m X_t
    \end{bmatrix}\begin{bmatrix}
        dt \\ dW_{1,t}\\ \vdots \\dW_{m,t}
    \end{bmatrix}\\
    &= a(X_t) dZ_t\,.
\end{split}
\end{equation*}
Since the $dZ_t$ here has the same form as that of the SDE \eqref{eq:SDEs}, we use the same process $Z$ described in Equation \eqref{eq:Z_t}, which has been shown to be a L\'evy process. 

As for the function $a(X_t)$, 
\begin{equation*}
\begin{split}
    \parallel a(X_t) - a(X_s)\parallel_F &= \parallel \begin{bmatrix}
        A(X_t - X_s) & G_1(X_t -X_s) & \ldots & G_m(X_t-X_s)
    \end{bmatrix} \parallel_F    \\
    &\leqslant \parallel A(X_t-X_s)\parallel_2 + \sum_{k=1}^m\parallel G_k(X_t-X_s)\parallel_2\\
    &\leqslant \parallel A\parallel_F \parallel X_t -X_s\parallel_2 + \sum_{k=1}^m \parallel G_k\parallel_F \parallel X_t -X_s\parallel_2\\
    &= \bigg(\parallel A\parallel_F + \sum_{k=1}^m \parallel G_k\parallel_F \bigg)\parallel X_t -X_s\parallel_2\,,
\end{split}
\end{equation*}
it is readily checked that function $a: \mathbb{R}^d \rightarrow \mathbb{R}^{d\times p}$ is Lipschitz. 
\end{proof}

\subsection{Proof of Proposition \ref{theorem:generator same}}\label{proof of theorem:generator same}
\begin{proof}
For the backward direction, when $b(x) = \tilde{b}(x)$ and $c(x) = \tilde{c}(x)$ for all $x\in \mathbb{R}^d$, it is obviously that $(\mathcal{L}f)(x) = (\tilde{\mathcal{L}}f)(x)$ for all $f \in C_b^2(\mathbb{R}^d)$ and $x\in \mathbb{R}^d$, that is $\mathcal{L} = \tilde{\mathcal{L}}$.

For the forward direction, since $(\mathcal{L}f)(x_1) = (\tilde{\mathcal{L}}f)(x_1)$ for all $f \in C_b^2(\mathbb{R}^d)$ and $x_1\in \mathbb{R}^d$.

We first set 
\begin{equation*}
    f(x) = x_p\,,
\end{equation*}
where $x_p$ denotes the $p$-th component of variable $x$. It is readily checked that 
\begin{equation*}
    b_p(x_1) = \tilde{b}_p(x_1)\,,
\end{equation*}
for all $x_1 \in \mathbb{R}^d$ and $p = 1, \ldots, d$. As a result, 
\begin{equation*}
    b(x) = \tilde{b}(x)\,, \ \ \ \mbox{for all } x \in \mathbb{R}^d\,.
\end{equation*}

Then we set 
\begin{equation*}
    f(x) = (x_p-x_{1p})(x_q-x_{1q})\,,
\end{equation*}
where $x_{1p}$ denotes the $p$-th component of $x_1$. It is readily checked that
\begin{equation*}
    c_{pq}(x_1) = \tilde{c}_{pq}(x_1)\,,
\end{equation*}
for all $x_1 \in \mathbb{R}^d$ and $p,q = 1, \ldots, d$. Consequently,
\begin{equation*}
    c(x) = \tilde{c}(x)\,, \ \ \  \mbox{for all } x\in \mathbb{R}^d\,.
\end{equation*}
\end{proof}

\subsection{Proof of Lemma \ref{lemma:law same}}\label{proof of lemma:law same}
\begin{proof}
For the forward direction, since
\begin{equation*}
    X(\cdot; x_0, A, G) \overset{\text{d}}{=} X(\cdot; x_0, \tilde{A}, \tilde{G})\,,
\end{equation*}
one has 
\begin{equation*}
    \mathbb{E}[X_t]=\mathbb{E}[\tilde{X}_t]\,, \ \ \ \forall 0 \leqslant t < \infty\,.
\end{equation*} Thus, 
\begin{equation*}
    (X_t - \mathbb{E}[X_t] )_{0 \leqslant t < \infty}\overset{\text{d}}{=} (\tilde{X}_t -\mathbb{E}[\tilde{X}_t])_{0 \leqslant t < \infty}\,,
\end{equation*} in particular, one has 
\begin{equation*}
    \mathbb{E}\{(X_{t+h}-\mathbb{E}[X_{t+h}])(X_{t}-\mathbb{E}[X_{t}])^{\top}\}=\mathbb{E}\{(\tilde{X}_{t+h}-\mathbb{E}[\tilde{X}_{t+h}])(\tilde{X}_{t}-\mathbb{E}[\tilde{X}_{t}])^{\top}\} \ \ \ \text{ for all } 0 \leqslant t,h < \infty\,.
\end{equation*}

For the backward direction, we know that the solution of the SDE \eqref{eq:SDEs} is a Gaussian process. The distribution of a Gaussian process can be fully determined by its mean and covariance functions. Therefore, the two processes have the same distribution when the mean and covariance are the same for both processes for all $ 0 \leqslant t,h < \infty$.
\end{proof}

\subsection{Proof of Lemma \ref{lemma:w_jk=0}}\label{proof of lemma:w_jk=0}
\begin{proof}
For the forward direction, since 
\begin{equation*}
    \text{rank}([\gamma_1|A\gamma_1|\ldots|A^{d-1}\gamma_1|\ldots|\gamma_n|A\gamma_n|\ldots|A^{d-1}\gamma_n]) < d\,,
\end{equation*}
then for all $l=[l_1, \ldots, l_n]^{\top} \in \mathbb{R}^n$, 
\begin{equation*}
    \text{rank}([\beta|A\beta|\ldots|A^{d-1}\beta]) < d\,,
\end{equation*}
 where $\beta := l_1\gamma_1 + \ldots + l_n\gamma_n$. Consequently, the corresponding ODE system
\begin{equation}\label{eq:ODE in proof}
\begin{split}
    \dot{x}(t) &= Ax(t)\,,\\ 
    x(0) &= \beta\,,
\end{split}   
\end{equation}
is not identifiable from $\beta$ by \cite[Theorem 2.5.]{stanhope2014identifiability}, where $\dot{x}(t)$ denotes the first derivative of $x(t)$ with respect to time $t$. 

Let 
\begin{equation*}
    \tilde{\beta} := Q^{-1}\beta\in \mathbb{R}^d\,,
\end{equation*}
and 
\begin{equation*}
    w_{k} := \left\{\begin{array}{ll}
    \tilde{\beta}_{k} \in \mathbb{R}^1, & \mbox{for } k = 1, \ldots, K_1\,,\\
    (\tilde{\beta}_{2k-K_1-1}, \tilde{\beta}_{2k-K_1})^{\top} \in \mathbb{R}^2, & \mbox{for } k= K_1+1, \ldots, K\,.
    \end{array}\right.
\end{equation*}

Simple calculation shows that 
\begin{equation*}
\begin{split}
    \tilde{\beta} &= Q^{-1}\beta \\
    &= Q^{-1}(l_1\gamma_1 + \ldots + l_n\gamma_n)\\
    &= l_1\tilde{\gamma}_1 + \ldots + l_n \tilde{\gamma}_n\,,
\end{split}
\end{equation*}
therefore, one has
\begin{equation}\label{eq:wk}
    w_k = l_1 w_{1,k} + \ldots + l_n w_{n,k}\,, \ \ \ \mbox{ for all } k \in \{1, \ldots, K\}\,.
\end{equation}
By \cite[Theorem 2.4]{qiu2022identifiability}, we know that for any $l\in \mathbb{R}^{n}$, there always exists $k \in \{1, \ldots, K\}$ such that $w_k = 0 \ (\in \mathbb{R}^1 \mbox{ or } \mathbb{R}^2)$ since the ODE \eqref{eq:ODE in proof} is not identifiable from initial state $\beta$. Next, we will show that this result is satisfied only when there exists a $k$ such that 
$w_{j,k} = 0 \ (\in \mathbb{R}^1 \mbox{ or } \mathbb{R}^2)$ for all $j=1,\ldots,n$. Let us rearrange the Equation \eqref{eq:wk} as
\begin{equation*}
    \begin{bmatrix}
        w_{1,1} & \ldots & w_{n,1}\\
        \vdots & \ddots & \vdots \\
        w_{1,K} & \ldots & w_{n,K}
    \end{bmatrix}\begin{bmatrix}
        l_1\\ \vdots \\ l_n
    \end{bmatrix}=\begin{bmatrix}
        w_1 \\ \vdots \\ w_K
    \end{bmatrix}\,,
\end{equation*}
assume for any $k \in \{1, \ldots, K\}$, $[w_{1, k}, \ldots, w_{n,k}]^{\top} \neq 0$, then there always exists a $l\in \mathbb{R}^n$ such that $w_k\neq 0 $ for all $k = \{1, \ldots, K\}$. The reason is that under this circumstance, for any $k \in \{1, \ldots, K\}$, the set of $l$'s such that $w_k = 0$ has Lebesgue measure zero in $\mathbb{R}^n$. Therefore, the set of $l$'s such that there exists a $k$ such that $w_k = 0$ has Lebesgue measure zero in $\mathbb{R}^n$. This result creates a contradiction. Thus, there must exist a $k$, such that $[w_{1, k}, \ldots, w_{n,k}]^{\top} = 0$, that is $|w_{j,k}| = 0$ for all $j=1, \ldots, n$.

For the backward direction, there exists $k$ such that $|w_{j,k}| = 0$ for all $j = 1, \ldots, n$, that is $w_{j,k} = 0 \ (\in \mathbb{R}^1 \mbox{ or } \mathbb{R}^2)$ for all $j=1,\ldots,n$. Simple calculation shows that 
\begin{equation*}
    \gamma_j = Q \tilde{\gamma}_j = \sum_{p=1}^{k-1}Q_p w_{j,p} + \sum_{p=k+1}^{K}Q_p w_{j,p}\,,
\end{equation*}
and 
\begin{equation*}
\begin{split}
    A^q \gamma_j &= Q\Lambda^q Q^{-1} \gamma_j\\
    &= Q \Lambda^q \tilde{\gamma}_j\\
    &= \sum_{p=1}^{k-1}Q_p J_p^q w_{j,p} + \sum_{p=k+1}^{K}Q_p J_p^q w_{j,p}\,,
\end{split}   
\end{equation*}
recall that 
\begin{equation*}
    J_k = \left\{\begin{array}{ll} \lambda_k, & \mbox{if } k = 1,\ldots, K_1\,,\\
    \begin{bmatrix}
        a_k & -b_k\\
        b_k & a_k
    \end{bmatrix}, & \mbox{if } k = K_1+1, \ldots, K\,.
    \end{array}\right.
\end{equation*}
Then matrix 
\begin{equation*}
\begin{split}
    M :&= [\gamma_1|A\gamma_1|\ldots|A^{d-1}\gamma_1|\ldots|\gamma_n|A\gamma_n|\ldots|A^{d-1}\gamma_n]\\
    &= Q_{-k} C\,,
\end{split}
\end{equation*}
where 
\begin{equation*}
    Q_{-k} = [Q_1 | \ldots | Q_{k-1}|Q_{k+1}|\ldots |Q_K]\,,
\end{equation*}
and matrix $C$ denotes:
\begin{equation*}
    \begin{bmatrix}
        w_{1,1}    & J_1w_{1,1}         & \ldots  & J_1^{d-1}w_{1,1}          & \ldots & w_{n,1}    & J_1w_{n,1}         & \ldots  & J_1^{d-1}w_{n,1} \\
        \vdots     & \vdots             & \ddots  & \vdots                    & \ddots & \vdots     & \vdots             & \ddots  & \vdots \\
        w_{1, k-1} & J_{k-1}w_{1, k-1}  & \ldots  & J_{k-1}^{d-1}w_{1, k-1}   & \ldots & w_{n, k-1} & J_{k-1}w_{n, k-1}  & \ldots  & J_{k-1}^{d-1}w_{n, k-1}\\
        w_{1, k+1} & J_{k+1}w_{1, k+1}  & \ldots  & J_{k+1}^{d-1}w_{1, k+1}   & \ldots & w_{n, k+1} & J_{k+1}w_{n, k+1}  & \ldots  & J_{k+1}^{d-1}w_{n, k+1} \\
        \vdots     & \vdots             & \ddots  & \vdots                    & \ddots & \vdots     & \vdots             & \ddots  & \vdots \\
        w_{1, K}   & J_K w_{1,K}        & \ldots  & J_K^{d-1} w_{1,K}         & \ldots & w_{n, K}   & J_K w_{n,K}        & \ldots  & J_K^{d-1} w_{n,K}
    \end{bmatrix}
\end{equation*}
We know that 
\begin{equation*}
    \text{rank}(M) = \text{rank}(Q_{-k} C) \leqslant \text{min}(\text{rank}(Q_{-k}), \text{rank}(C))\,.
\end{equation*}
When $k \in \{ 1, \ldots, K_1\}$, $Q_{-k} \in \mathbb{R}^{d\times (d-1)}$, and rank$(Q_{-k}) = d-1$, while when $k \in \{ K_1+1, \ldots, K\}$, $Q_{-k} \in \mathbb{R}^{d\times (d-2)}$, and rank$(Q_{-k}) = d-2$. In both cases, rank$(Q_{-k}) < d$, thus rank$(M) < d$.
\end{proof}

\subsection{Proof of Theorem \ref{thm:m1c1}}\label{proof of thm:m1c1}
\begin{proof}
Let $\tilde{A} \in \mathbb{R}^{d\times d}$ and $\tilde{G} \in \mathbb{R}^{d\times m}$, such that  $X(\cdot; x_0, A, G) \overset{\text{d}}{=} X(\cdot; x_0, \tilde{A}, \tilde{G}) $, we denote as $X \overset{\text{d}}{=}\tilde{X}$. For simplicity of notation, in the following, we denote $A_1:= A$, $A_2:= \tilde{A}$, $G_1:= G$ and $G_2:= \tilde{G}$, and denote  $X \overset{\text{d}}{=}\tilde{X}$ as  $X^1 \overset{\text{d}}{=}X^2$.

\textbf{Sufficiency}. We will show that under the identifiability condition \eqref{eq:identify_con}, one has $(A_1, G_1G_1^{\top}) = (A_2, G_2G_2^{\top})$.

We first show that $H_{1}=H_{2}$ ($H_{i}:= G_{i}G_{i}^{T}$).
Indeed, since $X^{1},X^{2}$ have the same distribution, one has 
\begin{equation}
\mathbb{E}[f(X_{t}^{1})]=\mathbb{E}[f(X_{t}^{2})]\label{eq:EqualSG}
\end{equation}
for all $0 \leqslant t < \infty$ and $f\in C^{\infty}(\mathbb{R}^{d}).$ By
differentiating (\ref{eq:EqualSG}) at $t=0$, one finds that 
\begin{equation}
({\cal L}_{1}f)(x_{0})=({\cal L}_{2}f)(x_{0})\,,\label{eq:EqualGen}
\end{equation}
where ${\cal L}_{i}$ is the generator of $X^{i}$ ($i=1,2$). Based on the Proposition \ref{prop:generator},
\begin{equation*}
({\cal L}_{i}f)(x_{0})=\sum_{k=1}^{d}\sum_{l=1}^{d}(A_{i})_{kl}x_{0l}\frac{\partial f}{\partial x_{k}}(x_{0}) 
+ \frac{1}{2}\sum_{k,l=1}^{d}(H_{i})_{kl}\frac{\partial^{2}f}{\partial x_{k}\partial x_{l}}(x_{0})\,,
\end{equation*}
where $(M)_{kl}$ denotes the $kl$-entry of matrix $M$, and $x_{0l}$
is the $l$-th component of $x_{0}.$ Since (\ref{eq:EqualGen}) is
true for all $f$, by taking 
\begin{equation*}
f(x)=(x_{p}-x_{0p})(x_{q}-x_{0q})\,,
\end{equation*}
it is readily checked that 
\begin{equation*}
(H_{1})_{pq}=(H_{2})_{pq}\,,
\end{equation*}
for all $p, q = 1, \ldots, d$. As a result, $H_{1}=H_{2}$. Let us call this matrix $H$.

Next, we show that $A_{1}=A_{2}.$ We first show the relationship between $A_i$ and $x_0$, and then show the relationship between $A_i$ and $H$. To this end, one first recalls
that 
\begin{equation*}
X_{t}^{i}=e^{A_{i}t}x_{0}+\int_{0}^{t}e^{A_{i}(t-s)}G_{i}dW_{s}\,.
\end{equation*}
Set $m_i(t) := \mathbb{E}[X_t^i]$, we know that $m_i(t)$ satisfies the ODE
\begin{equation}
\begin{split}
    \Dot{m}_i(t) &= A_i m_i(t)\,, \ \ \ \forall 0 \leqslant t < \infty \,,\\
    m_i(0) &= x_0\,,
\end{split}
\end{equation}
where $\Dot{f}(t)$ denotes the first derivative of function $f(t)$ with respect to time $t$.

Simple calculation shows that 
\begin{equation*}
m_i(t) = e^{A_it}x_0\,.
\end{equation*}
Since $X^1 \overset{\text{d}}{=} X^2$, one has 
\begin{equation*}
    \mathbb{E}[X_t^1] = \mathbb{E}[X_t^2]
\end{equation*}
for all $0 \leqslant t < \infty$. That is 
\begin{equation*}
    e^{A_1t}x_0 = e^{A_2t}x_0\,, \ \ \ \forall 0 \leqslant t < \infty\,.
\end{equation*}
Taking $k$-th derivative of $ e^{A_it}x_0$ with respect to $t$, one finds that
\begin{equation*}
    \frac{d^k}{dt^k}\Big|_{t=0} e^{A_it}x_0 = A_i^k x_0\,,
\end{equation*}
for all $k=1,2,\ldots$. Consequently, 
\begin{equation*}
    A_1^k x_0 = A_2^k x_0\,.
\end{equation*}
Let us denote this vector $A^k x_0$. Obviously, one gets
\begin{equation}\label{eq:Akx0}
    A_1 A^{k-1} x_0 =  A_2 A^{k-1} x_0 \ \ \ \text{for all } k=1,2,\ldots.
\end{equation}

In the following, we show the relationship between $A_i$ and $H$. Let us denote 
\begin{equation*}
Y_{t}^{i}:=\int_{0}^{t}e^{A_{i}(t-s)}G_{i}dW_{s} = X_t^i - \mathbb{E}[X_t^i]
\end{equation*}
and 
\begin{equation*}
V_{i}(t,t+h):=\mathbb{E}[Y_{t+h}^{i}\cdot(Y_{t}^{i})^{T}]\,.
\end{equation*}
Simple calculation shows that 
\begin{equation}\label{eq:VForm}
\begin{split}
V_{i}(t,t+h)&=e^{A_i h}\int_{0}^{t}e^{A_{i}(t-s)}He^{A_{i}^{\top}(t-s)}ds \\
&=e^{A_i h} V_i(t)\,,
\end{split}
\end{equation}
where $V_{i}(t):= V_{i}(t,t)$.

Since $X^{1}\stackrel{\text{d}}{=}X^{2}$, by Lemma \ref{lemma:law same},  one has 
\begin{equation*}
V_{1}(t,t+h)=V_{2}(t,t+h)\,,\ \ \ \forall 0 \leqslant t,h < \infty\,.
\end{equation*}

To obtain information about $A_{i},$ let us fix $t$ for now and
take $k$-th derivative of (\ref{eq:VForm}) with respect to $h$. One finds that
\begin{equation}
\frac{d^k}{dh^k}\Big|_{h=0}V_{i}(t,t+h)=A_{i}^k V_{i}(t)\,,\label{eq:ViDifh}
\end{equation}
for all $k=1,2,\ldots$.

On the other hand, the function $V_{i}(t)$ satisfies
the ODE \cite{socha2007linearization}

\begin{equation*}
\begin{split}
\dot{V}_{i}(t)&=A_{i}V_{i}(t)+V_{i}(t)A_{i}^{\top}+H, \ \ \ 0 \leqslant t < \infty\,,\\
V_{i}(0)&=0\,.
\end{split}
\end{equation*}

In particular, 
\begin{equation*}
\dot{V}_{i}(0)=A_{i}V_{i}(0)+V_{i}(0)A_{i}+H=H.
\end{equation*}
By differentiating (\ref{eq:ViDifh}) at $t=0$, it follows that 
\begin{equation*}
\frac{d}{dt}\Big|_{t=0}\frac{d^k}{dh^k}\Big|_{h=0}V_{i}(t,t+h)=A_{i}^kH \,,
\end{equation*}
for all $k=1,2,\ldots$. Consequently, 
\begin{equation*}
    A_1^k H = A_2^k H\,.
\end{equation*}
Let us denote this matrix $A^k H$. Obviously, by rearranging this matrix, one gets
\begin{equation}\label{eq:AkH}
    A_1 A^{k-1} H =  A_2 A^{k-1} H \ \ \ \text{for all } k=1,2,\ldots.
\end{equation}

Recall our identifiability condition is that $\text{rank}(M) = d$ with 
\begin{equation*}
    M :=[x_0|A x_0|\ldots|A^{d-1}x_0|H_{\cdot 1}|AH_{\cdot 1}|\ldots|A^{d-1}H_{\cdot 1}|\ldots|H_{\cdot d}|AH_{\cdot d}|\ldots|A^{d-1}H_{\cdot d}]\,.
\end{equation*}
If we denote the $j$-th column in $M$ as $M_{\cdot j}$, one gets 
\begin{equation*}
    A_1 M_{\cdot j} = A_2 M_{\cdot j}\,,
\end{equation*}
for all $j=1,\ldots, d+d^2$ by Equations \eqref{eq:Akx0} and \eqref{eq:AkH}.

This means one can find a full-rank matrix $B\in \mathbb{R}^{d\times d}$ by horizontally stacking $d$ linearly independent columns of matrix $M$, such that $A_1 B = A_2 B$. Since $B$ is invertible, one thus concludes that $A_1 = A_2$. Hence, the sufficiency of the condition is proved.

\textbf{Necessity}. 
In the following, we will show that when $A$ has distinct eigenvalues. The condition \eqref{eq:identify_con} stated in Theorem \ref{thm:m1c1} is also necessary. Specifically, we will show that when the identifiability condition \eqref{eq:identify_con} is not satisfied, one can always find a $\tilde{A}$ with $(A, GG^{\top}) \neq (\tilde{A}, GG^{\top})$ such that $X\overset{\text{d}}{=}\tilde{X}$. Recall that for simplicity of notation, we denote $A_1 := A$, $A_2: = \tilde{A}$, and denote $X\overset{\text{d}}{=}\tilde{X}$ as $X^1\overset{\text{d}}{=}X^2,$ where process $X^{i} = \{X_t^i: 0 \leqslant t < \infty\}$, and $X_t^i = X(t; x_0, A_i, G)$ following the form described in the solution process \eqref{eq:sol_SDE1}. In the following, we may use both $A$ and $A_1$ interchangeably according to the context.

By Lemma \ref{lemma:law same}, to guarantee $X^1 \overset{\text{d}}{=}X^2$ one only needs to show that 
\begin{equation*}
\begin{split}
    \mathbb{E}[X_t^1] &= \mathbb{E}[X_t^2]\,, \ \ \ \forall 0 \leqslant t < \infty\,,\\
    V_1(t, t+h) &= V_2(t, t+h)\,, \ \ \ \forall 0 \leqslant t,h < \infty\,.
\end{split}
\end{equation*}
That is,
\begin{equation}\label{eq:mean and covariance}
\begin{split}
    e^{A_1t}x_0 &= e^{A_2t}x_0\,, \ \ \ \forall 0 \leqslant t < \infty\,,\\
    e^{A_1h}V(t) &= e^{A_2h}V(t)\,, \ \ \ \forall 0 \leqslant t,h < \infty\,,\\
    V_1(t) &= V_2(t)\,, \ \ \ \forall 0 \leqslant t < \infty\,,
\end{split}
\end{equation}
where $V(t):= V_1(t) = V_2(t)$.

Recall that $H = GG^{\top}$. For simplicity of notation, abusing notation a bit, we denote $H_{\cdot 0}:= x_0$.  Let 
\begin{equation*}
    \tilde{H}_{\cdot j} := Q^{-1}H_{\cdot j}\,, \ \ \ \mbox{for all }j =0 ,\ldots, d\,,
\end{equation*}
and
    \begin{equation*}
        w_{j,k} := \left\{\begin{array}{ll}
        \tilde{H}_{\cdot j,k} \in \mathbb{R}^1, & \mbox{for } k = 1, \ldots, K_1\,,\\
        (\tilde{H}_{\cdot j,2k-K_1-1}, \tilde{H}_{\cdot j,2k-K_1})^{\top} \in \mathbb{R}^2, & \mbox{for } k= K_1+1, \ldots, K\,.
        \end{array}\right.
    \end{equation*}
When the identifiability condition \eqref{eq:identify_con} is not satisfied, that is 
\begin{equation*}
    \text{rank}([H_{\cdot 0}|A H_{\cdot 0}|\ldots|A^{d-1}H_{\cdot 0}|H_{\cdot 1}|AH_{\cdot 1}|\ldots|A^{d-1}H_{\cdot 1}|\ldots|H_{\cdot d}|AH_{\cdot d}|\ldots|A^{d-1}H_{\cdot d}]) < d\,,
\end{equation*}
by Lemma \ref{lemma:w_jk=0}, there exists $k$ such that $|w_{j,k}| = 0$, i.e., $w_{j,k} = 0 \ (\in \mathbb{R}^1 \mbox{ or } \mathbb{R}^2)$, for all $j=0, \ldots, d$.
Recall that 
\begin{equation}\label{eq:V(t) = QW}
\begin{split}
    V(t) = V_1(t) &=  \int_0^t e^{A(t-s)} H e^{A^{\top}(t-s)}ds\\
    &= \int_0^t Q e^{\Lambda(t-s)}Q^{-1} Q[\tilde{H}_{\cdot 1}|\ldots|\tilde{H}_{\cdot d}] e^{A^{\top}(t-s)}ds\\
    &= Q \int_0^t e^{\Lambda(t-s)}[\tilde{H}_{\cdot 1}|\ldots|\tilde{H}_{\cdot d}] e^{A^{\top}(t-s)}ds \\
    &:= Q \int_0^t W e^{A^{\top}(t-s)}ds \,,
\end{split}
\end{equation}
where 
\begin{equation*}
    W = e^{\Lambda(t-s)}[\tilde{H}_{\cdot1}|\ldots|\tilde{H}_{\cdot d}]\,,
\end{equation*}
and some calculation shows that
\begin{equation*}
    W = \begin{bmatrix}
        e^{J_1 (t-s)} w_{1,1} & \ldots &  e^{J_1 (t-s)} w_{d,1}\\
        \vdots & \ddots & \vdots\\
        e^{J_{K}(t-s)} w_{1,K} & \ldots & e^{J_{K}(t-s)} w_{d,K}
    \end{bmatrix}\,,
\end{equation*}
recall that 
\begin{equation*}
    J_k = \left\{\begin{array}{ll} \lambda_k, & \mbox{if } k = 1,\ldots, K_1\,,\\
    \begin{bmatrix}
        a_k & -b_k\\
        b_k & a_k
    \end{bmatrix}, & \mbox{if } k = K_1+1, \ldots, K\,.
    \end{array}\right.
\end{equation*}

Since $w_{j,k} = 0 \ (\in \mathbb{R}^1 \mbox{ or } \mathbb{R}^2)$, for all $j=0, 1, \ldots, d$, then if $k\in \{1,\ldots, K_1\}$, the $k$-th row of $W$
\begin{equation*}
    W_{k\cdot} = 0\,;
\end{equation*}
and if $k \in \{K_1 +1, \ldots, K\}$, then the $(2k-K_1-1)$-th and the $(2k-K_1)$-th rows
\begin{equation*}
    W_{(2k-K_1-1)\cdot}=W_{(2k-K_1)\cdot} = 0\,,
\end{equation*}
where $W_{k \cdot}$ denotes the $k$-th row vector of matrix $W$.

If we denote 
\begin{equation*}
    \tilde{V}(t)_{\cdot j} := Q^{-1} V(t)_{\cdot j}\,,\ \ \ \mbox{for all }j=1,\ldots, d\,,
\end{equation*}
and 
    \begin{equation*}
        w(t)_{j,k} := \left\{\begin{array}{ll}
        \tilde{V}(t)_{\cdot j,k} \in \mathbb{R}^1, & \mbox{for } k = 1, \ldots, K_1\,,\\
        (\tilde{V}(t)_{\cdot j,2k-K_1-1}, \tilde{V}(t)_{\cdot j,2k-K_1})^{\top} \in \mathbb{R}^2, & \mbox{for } k= K_1+1, \ldots, K\,.
        \end{array}\right.
    \end{equation*}
Then by multiplying $Q^{-1}$ in both sides of Equation \eqref{eq:V(t) = QW}, one obtains that
\begin{equation*}
     w(t)_{j,k} = 0\ (\in \mathbb{R}^1 \mbox{ or } \mathbb{R}^2)
\end{equation*}
for all $j= 1,\ldots,d$ and all $0 \leqslant t < \infty$. This indicates that when all the vectors $H_{\cdot j}$ for $j=1,\ldots, d$ are confined to an $A$-invariant proper subspace of $\mathbb{R}^d$, denotes as $L$, then each column of the covariance matrix $V(t)$ in Equation \eqref{eq:mean and covariance} is also confined to $L$, for all $0 \leqslant t < \infty$. Therefore, under condition \eqref{eq:identify_con}, $x_0, H_{\cdot j}$ (for all $j=1,\ldots, d$) and each column of the covariance matrix $V(t)$ (for all $0 \leqslant t < \infty$) are confined to an $A$-invariant proper subspace of $\mathbb{R}^d$. Thus, a matrix $A_2$ exists, with $A_2 \neq A_1$ such that the first two equations in Equation \eqref{eq:mean and covariance} are satisfied.

In particular, by \cite[Theorem 2.5]{qiu2022identifiability}, when $k\in \{1,\ldots, K_1\}$, there exists matrix $D\in \mathbb{R}^{d\times d}$, with the $kk$-th element $D_{kk} = c \neq 0$ and all the other elements of $D$ are zeros. Let 
\begin{equation*}
    A_2 = A_1 + QDQ^{-1} \neq A_1\,,
\end{equation*}
then $A_1$ and $A_2$ satisfy the first two equations in Equation \eqref{eq:mean and covariance}. Then we will show that such a $A_2$ also satisfy the third equation in  Equation \eqref{eq:mean and covariance}.

Some calculation shows that
\begin{equation}\label{eq:V1(t)}
\begin{split}
    V_1(t) 
    &= \int_0^t e^{A_1(t-s)} H e^{A_1^{\top}(t-s)}ds\\
    &= \int_0^t Q e^{\Lambda(t-s)} Q^{-1} H (Q^{T})^{-1}e^{\Lambda(t-s)} Q^Tds\\
    :&= \int_0^t Q e^{\Lambda(t-s)} P_1 e^{\Lambda(t-s)} Q^Tds\,,
\end{split}
\end{equation}
where $P_1 :=  Q^{-1} H (Q^{T})^{-1}$. And

\begin{equation}\label{eq:V2(t)}
\begin{split}
    V_2(t) 
    &= \int_0^t e^{A_2(t-s)} H e^{A_2^{\top}(t-s)}ds\\
    &= \int_0^t e^{(A_1 + QDQ^{-1})(t-s)} H e^{(A_1 + QDQ^{-1})^{\top}(t-s)}ds\\
    &= \int_0^t Q e^{\Lambda(t-s)} e^{D(t-s)}Q^{-1} H (Q^{T})^{-1}e^{D(t-s)} e^{\Lambda(t-s)} Q^Tds\\
    :&= \int_0^t Q e^{\Lambda(t-s)} P_2 e^{\Lambda(t-s)} Q^Tds\,,
\end{split}
\end{equation}
where $ P_2:= e^{D(t-s)}Q^{-1} H (Q^{T})^{-1}e^{D(t-s)}$.
If one can show that $P_1 = P_2$, then it is readily checked that $V_1(t) = V_2(t)$ for all $0 \leqslant t < \infty$. Recall that 
\begin{equation*}
    Q^{-1} H = \tilde{H},
\end{equation*}
where $\tilde{H} = [\tilde{H}_{\cdot1}| \ldots | \tilde{H}_{\cdot d}]$. And when condition \eqref{eq:identify_con} is not satisfied, the $k$-th row of $\tilde{H}$:
\begin{equation*}
    \tilde{H}_{k\cdot} = 0\,.
\end{equation*}
Since 
\begin{equation*}
    P_1 =  Q^{-1} H (Q^{T})^{-1} = \tilde{H}(Q^{T})^{-1}\,,
\end{equation*}
therefore, the $k$-th row of $P_1$:
\begin{equation*}
    (P_1)_{k\cdot} = 0\,.
\end{equation*}
Simple calculation shows that matrix $P_1$ is symmetric, thus, the $k$-th column of $P_1$:
\begin{equation*}
    (P_1)_{\cdot k} = 0\,.
\end{equation*}
It is easy to obtain that $e^{D(t-s)}$ is a diagonal matrix expressed as
\begin{equation*}
    e^{D(t-s)} = \begin{bmatrix}
        1 &  & & &\\
          & \ddots & & & \\
          &        & e^{c(t-s)} & & \\
          &        &  & \ddots & \\
          &        &  &  & 1    
    \end{bmatrix}
\end{equation*}
where $e^{c(t-s)}$ is the $kk$-th entry. Then, simple calculation shows that 
\begin{equation*}
    P_2 = e^{D(t-s)}Q^{-1} H (Q^{T})^{-1}e^{D(t-s)} =  e^{D(t-s)}P_1e^{D(t-s)} = P_1\,.
\end{equation*}
Therefore, one obtains that
\begin{equation*}
    V_1(t) = V_2(t)\,, \ \ \ \forall 0 \leqslant t < \infty\,.
\end{equation*}
Hence, when $k \in \{1, \ldots, K_1\} $, we find a $A_2$, with $A_2 \neq A_1$ such that Equation \eqref{eq:mean and covariance} is satisfied.

When $k\in \{K_1+1,\ldots, K\}$, there exists matrix $D'\in \mathbb{R}^{d\times d}$, with
\begin{equation*}
    \begin{bmatrix}
        D'_{2k-K_1-1,2k-K_1-1} & D'_{2k-K_1-1,2k-K_1}\\
        D'_{2k-K_1,2k-K_1-1} & D'_{2k-K_1,2k-K_1}
    \end{bmatrix} :=
    \begin{bmatrix}
        c_1 &c_2\\
        c_3 & c_4
    \end{bmatrix}\,,
\end{equation*}
where $M_{i,j}$ denotes the $ij$-th entry of matrix $M$, $c=[c_1, c_2, c_3, c_4]^{\top} \neq 0$, and all the other elements of $D'$ are zeros.  Let 
\begin{equation*}
    A_2 = A_1 + QD'Q^{-1} \neq A_1\,,
\end{equation*}
then $A_1$ and $A_2$ satisfy the first two equations in Equation \eqref{eq:mean and covariance}. Similar to the case where $k\in \{1, \ldots, K_1\}$, one can also show that such a $A_2$ also satisfies the third equation in  Equation \eqref{eq:mean and covariance}.

Therefore, assuming $A$ has distinct eigenvalues, then when the identifiability condition \eqref{eq:identify_con} is not satisfied, one can always find a $A_2$ with $(A_1, GG^{\top}) \neq (A_2, GG^{\top})$ such that Equation \eqref{eq:mean and covariance} is satisfied, i.e., $X^1 \overset{\text{d}}{=}X^2$. Hence, the necessity of the condition is proved.
\end{proof}


\subsection{Proof of Corollary \ref{col:m1c2}}\label{proof of col:m1c2}
\begin{proof}
There are two ways to prove this corollary, we will present both of them in the following.

\textbf{Way1.}
By \cite[Lemma 2.2] {zakai1970stationary},
\[
\text{span}([G|AG|\ldots|A^{d-1}G]) = \text{span}([GG^{\top}|AGG^{\top}|\ldots|A^{d-1}GG^{\top}])\,,
\]
where $\text{span}(M)$ denotes the linear span of the columns of the matrix $M$.
therefore, when
\begin{equation*}
    \text{rank}([G|AG|\ldots|A^{d-1}G]) = d\,,
\end{equation*}
then 
\begin{equation*}
    \text{span}([G|AG|\ldots|A^{d-1}G]) = \mathbb{R}^d\,,
\end{equation*}
thus,
\begin{equation*}
    \text{span}([GG^{\top}|AGG^{\top}|\ldots|A^{d-1}GG^{\top}]) = \mathbb{R}^d\,.
\end{equation*}
Therefore, 
\begin{equation*}
    \text{rank}([GG^{\top}|AGG^{\top}|\ldots|A^{d-1}GG^{\top}]) = d\,,
\end{equation*}
since the rank of a matrix is the dimension of its span.
Then by Theorem \ref{thm:m1c1}, the generator of the SDE \eqref{eq:SDEs} is identifiable from $x_0$.

\textbf{Way2.}
Let $\tilde{A} \in \mathbb{R}^{d\times d}$ and $\tilde{G} \in \mathbb{R}^{d\times m}$, such that  $X(\cdot; x_0, A, G) \overset{\text{d}}{=} X(\cdot; x_0, \tilde{A}, \tilde{G}) $, we denote as $X \overset{\text{d}}{=}\tilde{X}$, we will show that under our identifiability condition $(A, GG^{\top}) = (\tilde{A}, \tilde{G}\tilde{G}^{\top})$. By applying the same notations used in the proof of Theorem \ref{thm:m1c1}, in the following, we denote $A_1:= A$, $A_2:= \tilde{A}$, $G_1:= G$ and $G_2:= \tilde{G}$.

In the proof of Theorem \ref{thm:m1c1}, we have shown that $G_1G_1^{\top} = G_2G_2^{\top}$, thus, we only need to show that under the condition stated in this corollary, $A_1 = A_2$. According to the proof of Theorem \ref{thm:m1c1}, for all $0 \leqslant t < \infty$, we have
\begin{equation*}
\begin{split}
    V_1(t) &= V_2(t) \,,\\
    A_1 V_1(t) &= A_2 V_2(t)\,.
\end{split}
\end{equation*}

Let $V(t) := V_i(t) (i=1,2)$, one gets 
\begin{equation*}
    A_1 V(t) = A_2 V(t)\,, \ \ \ \forall 0 \leqslant t < \infty\,.
\end{equation*}
Therefore, if there exists a $0 \leqslant t < \infty$, such that $V(t)$ is nonsingular, then one can conclude that $A_1 = A_2$.

By \cite[Theorem 3.2]{jacobsen1991homgeneous}, the covariance $V(t)$ is nonsingular for all $t>0$, if and only if 
\begin{equation*}
    \text{rank}([G|AG|\ldots|A^{d-1}G]) = d\,,
\end{equation*}
that is the pair $[A,G]$ is controllable. Therefore, under the condition stated in this corollary, $A_1 = A_2$, thus the generator of the SDE \eqref{eq:SDEs} is identifiable from $x_0$.  
\end{proof}

\subsection{Proof of Theorem \ref{thm:m2c1}}\label{proof of thm:m2c1}
\begin{proof}
Let $\tilde{A}, \tilde{G}_k \in \mathbb{R}^{d\times d}$ for all $k=1, \ldots, m$, such that $X(\cdot; x_0, A, \{G_k\}_{k=1}^m) \overset{\text{d}}{=} X(\cdot; x_0, \tilde{A}, \{\tilde{G}_k\}_{k=1}^m) $, we denote as $X \overset{\text{d}}{=}\tilde{X}$, we will show that under our identifiability condition, for all $x\in \mathbb{R}^d$, $(A, \sum_{k=1}^mG_kxx^{\top}G_k^{\top}) = (\tilde{A}, \sum_{k=1}^m\tilde{G}_kxx^{\top}\tilde{G}_k^{\top})$. For simplicity of notation, in the following, we denote $A_1:= A$, $A_2:= \tilde{A}$, $G_{1,k}:= G_k$ and $G_{2,k}:= \tilde{G}_k$, and denote  $X \overset{\text{d}}{=}\tilde{X}$ as  $X^1 \overset{\text{d}}{=}X^2$.

We first show that $A_{1}=A_{2}$. Set $m_i(t) := \mathbb{E}[X_t^i]$, we know that $m_i(t)$ satisfies the ODE
\begin{equation}
\begin{split}
    \Dot{m}_i(t) &= A_i m_i(t)\,, \ \ \ \forall 0 \leqslant t < \infty \,,\\
    m_i(0) &= x_0\,,
\end{split}
\end{equation}
where $\Dot{f}(t)$ denotes the first derivative of function $f(t)$ with respect to time $t$.

Simple calculation shows that 
\begin{equation*}
m_i(t) = e^{A_it}x_0\,.
\end{equation*}
Since $X^1 \overset{\text{d}}{=} X^2$, one has 
\begin{equation*}
    \mathbb{E}[X_t^1] = \mathbb{E}[X_t^2]
\end{equation*}
for all $0 \leqslant t < \infty$. That is 
\begin{equation*}
    e^{A_1t}x_0 = e^{A_2t}x_0\,, \ \ \ \forall 0 \leqslant t < \infty\,.
\end{equation*}
Taking $j$-th derivative of $e^{A_it}x_0$ with respect to $t$, one finds that
\begin{equation*}
    \frac{d^j}{dt^j}\Big|_{t=0} e^{A_it}x_0 = A_i^j x_0\,,
\end{equation*}
for all $j=1,2,\ldots$. Consequently, 
\begin{equation*}
    A_1^j x_0 = A_2^j x_0\,.
\end{equation*}
Let us denote this vector $A^j x_0$. Obviously, one gets
\begin{equation}\label{eq:Akx01}
    A_1 A^{j-1} x_0 =  A_2 A^{j-1} x_0 \ \ \ \text{for all } j=1,2,\ldots \,.
\end{equation}
By condition A1, it is readily checked that $A_1 = A_2$ from Equation \eqref{eq:Akx01}. 

In the following, we show that under condition A2, for all $x\in \mathbb{R}^d$, 
\begin{equation*}
    \sum_{k=1}^m G_{1,k}xx^{\top} G_{1,k}^{\top} =  \sum_{k=1}^m G_{2,k} x x^{\top}  G_{2,k}^{\top} \,.
\end{equation*}

We know the function $P_i(t):= \mathbb{E}[X_t^i (X_t^i)^\top]$ satisfies the ODE
\begin{equation}\label{eq:secondmoment}
\begin{split}
    \Dot{P}_i(t) &=  A_i P_i(t) +P_i(t)A_i^{\top}  + \sum_{k=1}^m G_{i,k} P_i(t) G_{i,k}^{\top}\,, \ \ \ \forall 0 \leqslant t < \infty\,,\\
    P_i(0) &= x_0 x_0^{\top}\,.
\end{split}
\end{equation}

Since $X^1 \overset{\text{d}}{=} X^2$, 
\begin{equation*}
    P_1(t) = P_2(t)\,, \ \ \ \forall 0 \leqslant t < \infty\,,
\end{equation*}
let us call it $P(t)$. By differentiating $P_i(t)$ one also gets that 
\begin{equation*}
    \dot{P}_1(t) = \dot{P}_2(t)\,,\ \ \ \forall 0 \leqslant t < \infty\,.
\end{equation*}
Since we have shown that $A_1 = A_2$ under condition A1, from Equation \eqref{eq:secondmoment} one observes that
\begin{equation}\label{eq:G1xG1T=G2xG2T}
    \sum_{k=1}^m G_{1,k} P(t) G_{1,k}^{\top} = \sum_{k=1}^m G_{2,k} P(t) G_{2,k}^{\top}\,,\ \ \ \forall 0 \leqslant t < \infty\,.
\end{equation}

By vectorizing $P(t)$, some calculation shows that the ODE \eqref{eq:secondmoment} can be expressed as
\begin{equation}\label{eq:secondmoment vec}
\begin{split}
    \text{vec}(\dot{P}(t)) &= \mathcal{A} \text{vec}(P(t))\,,\\
    \text{vec}(P(0)) &= \text{vec}(x_0 x_0^{\top})\,,
\end{split}
\end{equation}
with an explicit solution
\begin{equation*}
\text{vec}(P(t)) = e^{\mathcal{A}t} \text{vec}(x_0 x_0^{\top})\,,
\end{equation*}
where $\mathcal{A} = A \oplus A + \sum_{k=1}^m G_k \otimes G_k \in \mathbb{R}^{d^2 \times d^2}$, and $\text{vec}(M)$ denotes the vector by stacking the columns of matrix $M$ vertically.

By definition, $P(t)\in \mathbb{R}^{d\times d}$ is symmetric, thus $\text{vec}(P(t))$ for all $0 \leqslant t < \infty$ is confined to a proper subspace of $\mathbb{R}^{d^2}$, let us denote this proper subspace $W$, simple calculation shows that 
\begin{equation*}
    \text{dim}(W) = (d^2+d)/2\,,
\end{equation*}
where $\text{dim}(W)$ denotes the dimension of the subspace $W$, that is the number of vectors in any basis for $W$. In particular, one can find a basis of $W$ denoting as 
\begin{equation*}
    \{\text{vec}(E_{11}), \text{vec}(E_{21}), \text{vec}(E_{22}), \ldots, \text{vec}(E_{dd})\}\,,
\end{equation*}
where $E_{ij}$ stands for a $d\times d$ matrix, with the $ij$-th and $ji$-th elements are $1$ and all other elements are $0$, for all $i,j = 1, \ldots, d$ and $i \geqslant j$.

Suppose there exists $t_i$'s, for $i=1, \ldots, (d^2+d)/2$, such that $\text{vec}(P(t_1)), \ldots, \text{vec}(P(t_{(d^2+d)/2}))$ are linearly independent, then for all $x\in \mathbb{R}^d$, 
\begin{equation*}
    \text{vec}(xx^{\top}) = l_1 \text{vec}(P(t_1)) + \ldots + l_{(d^2+d)/2}\text{vec}(P(t_{(d^2+d)/2}))\,,
\end{equation*}
that is 
\begin{equation*}
    xx^{\top} = l_1 P(t_1) + \ldots + l_{(d^2+d)/2}P(t_{(d^2+d)/2})\,,
\end{equation*}
where $l := \{l_1, \ldots, l_{(d^2+d)/2}\} \in \mathbb{R}^{(d^2+d)/2}$.
According to Equation \eqref{eq:G1xG1T=G2xG2T}, it is readily checked that for all $x\in \mathbb{R}^d$,
\begin{equation*}
    \sum_{k=1}^m G_{1,k}xx^{\top} G_{1,k}^{\top} =  \sum_{k=1}^m G_{2,k} x x^{\top}  G_{2,k}^{\top} \,.
\end{equation*}

By \cite[Lemma 6.1]{stanhope2014identifiability}, there exists $(d^2+d)/2$ $t_i$'s such that $\text{vec}(P(t_1)), \ldots, \text{vec}(P(t_{(d^2+d)/2}))$ are linearly independent, if and only if the orbit of $\text{vec}(P(t))$ (i.e., the trajectory of ODE \eqref{eq:secondmoment vec} started from initial state $v$), denoting as $\gamma(\mathcal{A}, v)$ with $v=\text{vec}(x_0 x_0^{\top})$, is not confined to a proper subspace of $W$.

Next, we show that under condition A2, orbit $\gamma(\mathcal{A}, v)$ is not confined to a proper subspace of $W$.

Assume orbit $\gamma(\mathcal{A}, v)$ is confined to a proper subspace of $W$. Then there exists $w \neq 0 \in W$ such that 
\begin{equation*}
    w^{\top} e^{\mathcal{A}t}v = 0 \,, \ \ \ \forall 0 \leqslant t < \infty\,.
\end{equation*}
By taking $j$-th derivative with respect to $t$, we have 
\begin{equation*}
    w^{T} \mathcal{A}^j e^{\mathcal{A}t}v = 0\,, \ \ \ \forall 0 \leqslant t < \infty, j=0,\ldots,(d^2+d-2)/2\,.
\end{equation*}
In particular, for $t=0$,
\begin{equation*}
     w^{T} \mathcal{A}^jv = 0\,, \ \ \ \text{for } j=0,\ldots,(d^2+d-2)/2\,.
\end{equation*}
Therefore,
\begin{equation}\label{eq:wvA}
    w^{T} [v|\mathcal{A}v|\ldots|\mathcal{A}^{(d^2+d-2)/2}v] = 0\,.
\end{equation}
Since $w \in W$, $w\in \text{span}\{\text{vec}(E_{11}), \text{vec}(E_{21}), \ldots, \text{vec}(E_{dd})\}$, set $\text{vec}(\overline{w}) := w$, then $\overline{w}$ is a $d\times d$ symmetric matrix. Since $P(t)$ is symmetric for all $0 \leqslant t < \infty$, according to Equation \eqref{eq:secondmoment}, simple calculation shows that the $j$-th derivative of $P(t)$ is also symmetric for all $0 \leqslant t < \infty$, for $j=0,1,\ldots$. Recall that
\begin{equation*}
\begin{split}
    \text{vec}(P(t)) &= e^{\mathcal{A}t}v\,,\\
    \text{vec}(P^{(j)}(t)) &= \mathcal{A}^je^{\mathcal{A}t}v\,,
\end{split}
\end{equation*}
where $P^{(j)}(t)$ denotes the $j$-th derivative of $P(t)$ with respect to $t$. In particular, when $t=0$, one has
\begin{equation*}
\begin{split}
    \text{vec}(P(0)) &= v\,,\\
    \text{vec}(P^{(j)}(0)) &= \mathcal{A}^jv\,,
\end{split}
\end{equation*}
then if we denote matrix $\overline{\mathcal{A}^j v}$ by setting $\text{vec}(\overline{\mathcal{A}^j v}) := \mathcal{A}^j v$, matrices $\overline{\mathcal{A}^j v}$ are symmetric for all $j=0,1,\ldots$.

Therefore, we can say that there are only $(d^2+d)/2$ distinct elements in each of the vectors: $w, v, \mathcal{A}v, \ldots, \mathcal{A}^{(d^2+d-2)/2}v$ in Equation \eqref{eq:wvA}. Moreover, since these vectors all correspond to $d\times d$ symmetric matrices, the repetitive elements in each vector appear in the same positions in each vector. Hence, we can focus on checking those distinct elements in each vector, that is Equation \eqref{eq:wvA} can be expressed as
\begin{equation}
    \underline{w}^{T} [\underline{v}|\underline{\mathcal{A}v}|\ldots|\underline{\mathcal{A}^{(d^2+d-2)/2}v}] = 0\,,
\end{equation}
where $\underline{w}\in \mathbb{R}^{(d^2+d)/2}$ denotes as
\begin{equation*}
   \underline{w} := [\overline{w}_{11}, \sqrt{2}\overline{w}_{21}, \ldots, \sqrt{2}\overline{w}_{d1}, \overline{w}_{22}, \ldots, \sqrt{2}\overline{w}_{d2}, \ldots, \overline{w}_{dd} ]^{\top}\,,
\end{equation*}
where $\overline{w}_{ij}$ denotes the $ij$-th element of $\overline{w}$, with $i,j=1,\ldots, d$ and $i\geqslant j$. When $i\neq j$, the element is multiplied by a $\sqrt{2}$, that is, $\underline{w}$ only keeps the distinctive elements in $w$, and for each of the repetitive element, we multiply $\sqrt{2}$. We define $\underline{v}, \underline{\mathcal{A}v}, \ldots$ using the same way.

Under condition A2, matrix 
\begin{equation*}       [\underline{v}|\underline{\mathcal{A}v}|\ldots|\underline{\mathcal{A}^{(d^2+d-2)/2}v}] \in \mathbb{R}^{\frac{(d^2+d)}{2} \times \frac{(d^2+d)}{2}}
\end{equation*}
is easily to be checked to be invertible, then $\underline{w} = 0$, thus $w = 0$. This contradicts to $w\neq 0$, therefore, under condition A2, orbit $\gamma(\mathcal{A},v)$ is not confined to a proper subspace of $W$. Hence, the theorem is proved.
\end{proof}

\newpage
\section{Genericity of the derived identifiability conditions}
\subsection{The identifiability condition stated in Theorem \ref{thm:m2c1} is generic}\label{sec:generic conditions}
We will show that the identifiability condition stated in Theorem \ref{thm:m2c1} is generic. Specifically, we will show that for the set of $(x_0, A, \{G_k\}_{k=1}^m)\in \mathbb{R}^{d+(m+1)d^2}$ such that either condition A1 or A2 stated in Theorem \ref{thm:m2c1} is violated, has Lebesgue measure zero in $\mathbb{R}^{d+(m+1)d^2}$. In the following, we first present a lemma we will use to prove our main proposition.

\begin{lemma}\label{lemma:polynomial}
Let $p: \mathbb{R}^n \rightarrow \mathbb{R}$ be a non-zero polynomial function. Let $Z := \{x\in \mathbb{R}^n : p(x) = 0\}$. Then $Z$ has Lebesgue measure zero in $\mathbb{R}^n$.
\end{lemma}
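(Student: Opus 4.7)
The plan is to prove the lemma by induction on the ambient dimension $n$, using Fubini's theorem to peel off one variable at a time.

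For the base case $n=1$, a non-zero polynomial $p: \mathbb{R}\to \mathbb{R}$ of degree $d$ has at most $d$ real roots, so $Z$ is a finite set and thus has Lebesgue measure zero in $\mathbb{R}$.

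For the inductive step, assume the result for dimension $n-1$ and let $p:\mathbb{R}^n\to\mathbb{R}$ be a non-zero polynomial. I would group $p$ as a polynomial in its last coordinate with coefficients that are polynomials in the other $n-1$ coordinates: write
\begin{equation*}
    p(x_1,\ldots,x_n) \;=\; \sum_{k=0}^{d} p_k(x_1,\ldots,x_{n-1})\, x_n^{k},
\end{equation*}
where $d$ is the degree of $p$ in $x_n$ and $p_d$ is a non-zero polynomial on $\mathbb{R}^{n-1}$. Let $A := \{(x_1,\ldots,x_{n-1}) : p_d(x_1,\ldots,x_{n-1}) = 0\}$; by the inductive hypothesis, $A$ has Lebesgue measure zero in $\mathbb{R}^{n-1}$, hence $A\times\mathbb{R}$ has Lebesgue measure zero in $\mathbb{R}^n$ by Fubini. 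For every $(x_1,\ldots,x_{n-1}) \notin A$, the leading coefficient $p_d(x_1,\ldots,x_{n-1})$ is non-zero, so the univariate polynomial $x_n \mapsto p(x_1,\ldots,x_n)$ is non-zero of degree exactly $d$ and therefore has at most $d$ roots; its zero slice thus has one-dimensional Lebesgue measure zero.

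Finally, I would apply Fubini's theorem to the indicator of $Z$: the measure of $Z$ equals the integral over $(x_1,\ldots,x_{n-1})\in\mathbb{R}^{n-1}$ of the measure of the corresponding $x_n$-slice. On the complement of $A$ this slice has measure zero by the univariate case, and the contribution from $A$ itself is zero because $A$ has $(n-1)$-dimensional measure zero. Hence $Z$ has Lebesgue measure zero in $\mathbb{R}^n$, completing the induction. The only mildly delicate step is the measurability/decomposition used in invoking Fubini, but since $Z$ is closed (as the preimage of $\{0\}$ under a continuous function) and the slices are Borel, this is routine; no step here is a real obstacle.
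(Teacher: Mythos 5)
Your proof is correct and follows essentially the same route as the paper's: induction on the dimension, a univariate root-counting base case, and Fubini applied to the slices in the last coordinate. The only cosmetic difference is that you condition on the non-vanishing of the leading coefficient $p_d$, whereas the paper splits according to whether all coefficients $p_0,\ldots,p_k$ vanish or some coefficient is non-zero; both variants yield the same measure-zero decomposition.
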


\begin{proof}
When $n=1$, suppose the degree of $x$ is $k\geqslant 1$, then by the fundamental theorem of algebra, there are at most $k$ $x$'s such that $x\in Z$. Therefore, Z has Lebesgue measure zero, since a finite set has measure zero in $\mathbb{R}$.

Suppose the lemma is established for polynomials in $n-1$ variables. Let $p$ be a non-zero polynomial in $n$ variables, say of degree $k\geqslant 1$ in $x_n$, then we can write

\begin{equation*}
    p(\boldsymbol{x}, x_n) = \sum_{j=0}^k p_j(\boldsymbol{x}) x_n^{j}\,,
\end{equation*}
where $\boldsymbol{x} = \{x_1, \ldots, x_{n-1}\}$ and $p_0, \ldots, p_k$ are polynomials in the $n-1$ variables $\{x_1, \ldots, x_{n-1}\}$, and there exists $j \in\{0, \ldots, k\}$ such that $p_j$ is a non-zero polynomial since $p$ is a non-zero polynomial. Then we can denote $Z$ as 
\begin{equation*}
    Z = \{(\boldsymbol{x}, x_n): p(\boldsymbol{x},x_n)=0\}\,.
\end{equation*}
Suppose $(\boldsymbol{x}, x_n)\in Z$, then there are two possibilities:
\begin{enumerate}
    \item [case 1] $p_0(\boldsymbol{x}) = \ldots = p_k(\boldsymbol{x}) = 0$.
    \item [case 2] there exists $i \in \{0, \ldots, k\}$ such that $p_i(\boldsymbol{x}) \neq 0$.
\end{enumerate}
Let
\begin{equation*}
\begin{split}
    A := \{(\boldsymbol{x}, x_n) \in Z: \mbox{ case 1 is satisfied} \}\,,\\
    B := \{(\boldsymbol{x}, x_n) \in Z: \mbox{ case 2 is satisfied} \}\,,
\end{split}
\end{equation*}
then $Z = A \cup B$. 

For case 1, recall that there exists $j\in \{0, \ldots, k\}$ such that $p_j$ is a non-zero polynomial, let 
\begin{equation*}
    A_j := \{\boldsymbol{x}\in \mathbb{R}^{n-1}: p_j(\boldsymbol{x})= 0\}\,,
\end{equation*}
then by the induction hypothesis, $A_j$ has Lebesgue measure zero in $\mathbb{R}^{n-1}$. Therefore, $A_j \times \mathbb{R}$ has Lebesgue measure zero in $\mathbb{R}^n$. Since $A \subseteq A_j \times \mathbb{R}$, $A$ has Lebesgue measure zero in $\mathbb{R}^n$.

For case 2, let $\lambda^n$ be Lebesgue measure on $\mathbb{R}^n$, then 
\begin{equation}\label{eq:integral}
\begin{split}
    \lambda^n(B) 
    &= \int_{\mathbb{R}^n}\mathbbm{1}_B(\boldsymbol{x}, x_n)d\lambda^n\\
    &= \int_{\mathbb{R}^n}\mathbbm{1}_B(\boldsymbol{x}, x_n)d\boldsymbol{x}dx_n\\
    &= \int_{\mathbb{R}^{n-1}} \Big(\int_{\mathbb{R}}\mathbbm{1}_B(\boldsymbol{x}, x_n) dx_n\Big)d\boldsymbol{x}\,,
\end{split}
\end{equation}

where 
\begin{equation*}
    \mathbbm{1}_B(\boldsymbol{x}, x_n) = \left\{\begin{array}{ll}
        1, & \mbox{if }(\boldsymbol{x}, x_n) \in B\,,\\
       0, & \mbox{if } (\boldsymbol{x}, x_n) \notin B\,.
        \end{array}\right.
\end{equation*}
The inner integral in Equation \eqref{eq:integral} is equal to zero, since for a fixed $\boldsymbol{x}$, there are finitely many (indeed, at most $k$) $x_n$'s such that $p(\boldsymbol{x}, x_n) = 0$  under the condition of case 2. Thus, $\lambda^n(B) = 0$, that is, $B$ has Lebesgue measure zero in $\mathbb{R}^n$. Then it is readily checked that $Z$ has Lebesgue measure zero.
\end{proof}

Now we are ready to present the main proposition.

\begin{prop}\label{prop:conditions generic} Let 
\begin{equation*}
    S:=\{(x_0, A, \{G_k\}_{k=1}^m)\in \mathbb{R}^{d+(m+1)d^2}: \mbox{ either condition \textnormal{A1} or \textnormal{A2} in Theorem \ref{thm:m2c1} is violated}\}\,,
\end{equation*}
then $S$ has Lebesgue measure zero in $\mathbb{R}^{d+(m+1)d^2}$.
\end{prop}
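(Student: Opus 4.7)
The plan is to decompose $S = S_1 \cup S_2$, where
\[
S_1 := \{(x_0, A, \{G_k\}) : \text{condition A1 fails}\}, \qquad S_2 := \{(x_0, A, \{G_k\}) : \text{condition A2 fails}\},
\]
and show each has Lebesgue measure zero. In both cases the failure is defined by the vanishing of polynomial expressions in the entries of $(x_0, A, \{G_k\})$, so Lemma \ref{lemma:polynomial} applies once one exhibits a single parameter choice at which the relevant polynomial is nonzero.

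For $S_1$, the matrix $[x_0 \,|\, A x_0 \,|\, \ldots \,|\, A^{d-1} x_0]$ is $d \times d$, so A1 fails precisely when its determinant (a polynomial $p_1$ in the entries of $x_0$ and $A$ only) vanishes. I would verify $p_1 \not\equiv 0$ by plugging in $A = \mathrm{diag}(1, 2, \ldots, d)$ and $x_0 = (1, \ldots, 1)^\top$, for which the matrix becomes a Vandermonde matrix and is invertible. By Lemma \ref{lemma:polynomial}, the zero set of $p_1$ in $\mathbb{R}^{d + d^2}$ has Lebesgue measure zero, so $S_1$, being that zero set crossed with $\mathbb{R}^{md^2}$, has measure zero in $\mathbb{R}^{d+(m+1)d^2}$ by a Fubini-type argument.

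For $S_2$, condition A2 fails iff every $(d^2+d)/2$-sized minor of $[v \,|\, \mathcal{A} v \,|\, \ldots \,|\, \mathcal{A}^{(d^2+d-2)/2} v]$ vanishes; equivalently, the polynomial $p_2 := \sum_I (\det M_I)^2$ (sum over all maximal minors) vanishes. Each $\det M_I$ is a polynomial in the entries of $(x_0, A, \{G_k\})$, so $p_2$ is too. The main obstacle is showing $p_2 \not\equiv 0$, i.e., producing one parameter tuple satisfying A2. I would take $G_k = 0$ for all $k$, $A = \mathrm{diag}(2, 4, \ldots, 2^d)$, and $x_0 = (1, \ldots, 1)^\top$. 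Then $\mathcal{A} = A \oplus A$ preserves the subspace $W$ of vectorized symmetric matrices, and on the basis $\{\mathrm{vec}(E_{ij})\}_{i \geq j}$ of $W$ it acts diagonally with eigenvalues $\lambda_i + \lambda_j = 2^i + 2^j$, which are pairwise distinct over $i \geq j$ by uniqueness of binary expansions. Moreover, $v = \mathrm{vec}(x_0 x_0^\top)$ expands with nonzero coefficient along every basis vector $\mathrm{vec}(E_{ij})$. A standard Vandermonde argument then shows that $\{v, \mathcal{A} v, \ldots, \mathcal{A}^{(d^2+d-2)/2} v\}$ is linearly independent in $W$, which is exactly A2. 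Hence $p_2 \not\equiv 0$, and by Lemma \ref{lemma:polynomial} the set $S_2$ has Lebesgue measure zero in $\mathbb{R}^{d+(m+1)d^2}$.

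Combining the two bounds, $S = S_1 \cup S_2$ has Lebesgue measure zero. The only delicate step is the explicit construction for A2, which requires simultaneously ensuring the eigenvalues of $\mathcal{A}|_W$ are distinct and that the vector $v$ is cyclic for $\mathcal{A}|_W$; the choice above handles both with a clean diagonalization.
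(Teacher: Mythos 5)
Your proposal is correct and follows the same overall strategy as the paper: decompose $S$ as $S_1\cup S_2$, characterize each failure as the zero set of a polynomial in the entries of $(x_0,A,\{G_k\})$, and invoke Lemma \ref{lemma:polynomial}. The differences are in how the two non-trivial steps are handled, and in both cases your version is the more complete one. For $S_2$ you encode rank deficiency of the $d^2\times\frac{d^2+d}{2}$ Krylov matrix via the sum of squares of its maximal minors, which sidesteps the paper's reduction to a square matrix of ``underlined'' vectors obtained by deleting the repeated entries coming from symmetry; both encodings yield a single polynomial whose zero set is $S_2$. More importantly, the paper only asserts that the resulting polynomials are not identically zero (for $S_2$ it argues that a composition of non-zero polynomials is non-zero, which is not automatic), whereas you exhibit explicit witness points: a Vandermonde configuration $A=\mathrm{diag}(1,\dots,d)$, $x_0=(1,\dots,1)^{\top}$ for A1, and $G_k=0$, $A=\mathrm{diag}(2,4,\dots,2^d)$, $x_0=(1,\dots,1)^{\top}$ for A2. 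Your A2 witness is sound: with $A$ diagonal, $\mathcal{A}=A\oplus A$ preserves $W$ and acts on $\mathrm{vec}(E_{ij})$ with eigenvalue $2^i+2^j$, these are pairwise distinct over $i\geqslant j$ by uniqueness of binary expansions (the diagonal cases $2^{i+1}$ have a single bit and cannot collide with the two-bit off-diagonal sums), and $v$ has all coefficients equal to $1$ in that eigenbasis, so the Vandermonde factorization of the Krylov matrix gives full rank. This explicit construction closes the one genuine gap in the paper's own argument.
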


\begin{proof}
Let 
\begin{equation*}
    S_A := \{(x_0, A)\in \mathbb{R}^{d+d^2}:\mbox{ condition A1 is violated}\}\,,
\end{equation*}
we first show that $S_A$ has Lebesgue measure zero in $\mathbb{R}^{d+d^2}$. Suppose 
$(x_0,A) \in S_A$, then $(x_0, A)$ satisfies
\begin{equation*}
    \text{rank}([x_0|Ax_0|\ldots|A^{d-1}x_0]) < d\,,
\end{equation*} 
that is the set of vectors $\{x_0, Ax_0, \ldots, A^{d-1}x_0\}$ are linearly dependent, this means that
\begin{equation}\label{eq:determinant1}
    \text{det}([x_0|Ax_0|\ldots|A^{d-1}x_0]) = 0\,.
\end{equation}
It is a simple matter of algebra that the left side of \eqref{eq:determinant1} can be expressed as some universal polynomial of the entries of $x_0$ and entries of $A$, denotes $p(x_0, A) = p(x_{01}, \ldots, x_{0d}, a_{11}, a_{12}, \ldots, a_{dd})$, where $x_{0j}$ denotes the $j$-th entry of $x_0$ and $a_{ij}$ denotes the $ij$-th entry of $A$. Therefore, one concludes that
\begin{equation*}
   p(x_0, A) = p(x_{01}, \ldots, x_{0d},a_{11}, a_{12}, \ldots, a_{dd}) = 0\,.
\end{equation*}
Thus, $S_A$ can be expressed as
\begin{equation*}
     S_A = \{(x_0, A)\in \mathbb{R}^{d+d^2}:p(x_0, A)=0 \}\,.
\end{equation*}
Some calculation shows that
\begin{equation}\label{eq:det}
    p(x_0, A) = \sum_{i_1,\ldots, i_d=1}^d x_{0i_1}\ldots x_{0i_d}\text{det}([(A^0)_{\cdot i_1}| (A^1)_{\cdot i_2}| \ldots |(A^{d-1})_{\cdot i_d}])\,,
\end{equation}
where $(M)_{\cdot j}$ denotes the $j$-th column vector of matrix $M$. Obviously, $p(x_0, A)$ is a non-zero polynomial function of entries of $x_0$ and entries of $A$, therefore, by Lemma \ref{lemma:polynomial}, $S_A$ has Lebesgue measure zero in $\mathbb{R}^{d+d^2}$.
Let 
\begin{equation*}
    S_1 := \{(x_0, A,  \{G_k\}_{k=1}^m)\in \mathbb{R}^{d+(m+1)d^2}:\mbox{ condition A1 is violated}\}\,,
\end{equation*}
then it is readily checked that $S_1$ has Lebesgue measure zero in $\mathbb{R}^{d+(m+1)d^2}$.

Let 
\begin{equation*}
    S_2 := \{(x_0, A,  \{G_k\}_{k=1}^m)\in \mathbb{R}^{d+(m+1)d^2}:\mbox{ condition A2 is violated}\}\,,
\end{equation*}
we then show that $S_2$ has Lebesgue measure zero in $\mathbb{R}^{d+(m+1)d^2}$. Suppose $(x_0, A,  \{G_k\}_{k=1}^m) \in S_2$, then $(x_0, A,  \{G_k\}_{k=1}^m)$ satisfies
\begin{equation*}
    \text{rank}([v|\mathcal{A}v|\ldots|\mathcal{A}^{(d^2+d-2)/2}v]) < (d^2+d)/2\,,
\end{equation*}
recall that $\mathcal{A} = A \oplus A + \sum_{k=1}^m G_k \otimes G_k \in \mathbb{R}^{d^2 \times d^2}$ and $v = \text{vec}(x_0x_0^{\top}) \in \mathbb{R}^{d^2}$.
According to the proof \ref{proof of thm:m2c1} of Theorem \ref{thm:m2c1}, we obtain that the set of vectors $\{v, \mathcal{A}v, \ldots, \mathcal{A}^{(d^2+d-2)/2}\}$ are linearly dependent. Because all of these vectors are transferred from vectorizing $d\times d$ symmetric matrices, thus each of these vectors has only $(d^2+d)/2$ distinct elements and the repetitive elements appear in the same positions in all vectors. Hence, abuse notation a little bit, we can focus on checking those distinct elements in each vector, that is
\begin{equation}
    \{\underline{v}|\underline{\mathcal{A}v}|\ldots|\underline{\mathcal{A}^{(d^2+d-2)/2}v}\}\,,
\end{equation}
where $\underline{v}\in \mathbb{R}^{(d^2+d)/2}$ denotes the vector of deleting the repetitive elements of $v$. Since the set of vectors $\{v, \mathcal{A}v, \ldots, \mathcal{A}^{(d^2+d-2)/2}\}$ are linearly dependent, the set of vectors $\{\underline{v}|\underline{\mathcal{A}v}|\ldots|\underline{\mathcal{A}^{(d^2+d-2)/2}v}\}$ are linearly dependent, that is
\begin{equation}\label{eq:determinant2}
    \text{det}([\underline{v}|\underline{\mathcal{A}v}|\ldots|\underline{\mathcal{A}^{(d^2+d-2)/2}v}]) = 0\,.
\end{equation}
Each entry of $v$ can be written as a non-zero polynomial function of entries of $x_0$ since $v = \text{vec}(x_0x_0^{\top})$. Each entry of $\mathcal{A}$ can be written as a non-zero polynomial function of entries of $A$ and $G_k$ with $k=1, \ldots, m$, since  $\mathcal{A} = A \oplus A + \sum_{k=1}^m G_k \otimes G_k \in \mathbb{R}^{d^2 \times d^2}$. Hence, the left side of Equation \eqref{eq:determinant2} can be expressed as some universal polynomial of the entries of $x_0$, $A$ and $G_k$'s, denotes
\begin{equation*}
    p(x_0, A, \{G_k\}_{k=1}^m)
    =p(x_{01}, \ldots, x_{0d},a_{11}, \ldots, a_{dd}, G_{1,11}, \ldots, G_{1, dd}, \ldots, G_{m,11}, \ldots, G_{m, dd} ) \,,
\end{equation*}
where $G_{k, ij}$ denotes the $ij$-th entry of matrix $G_k$. Therefore, one concludes that
\begin{equation*}
    p(x_0, A, \{G_k\}_{k=1}^m)= 0\,.
\end{equation*}
Thus, $S_2$ can be expressed as
\begin{equation*}
    S_2 := \{(x_0, A,  \{G_k\}_{k=1}^m)\in \mathbb{R}^{d+(m+1)d^2}: p(x_0, A, \{G_k\}_{k=1}^m)= 0\}\,.
\end{equation*}
Similar to the calculation of $p(x_0, A)$ in Equation \eqref{eq:det}, $p(x_0, A, \{G_k\}_{k=1}^m)$ can be expressed as a non-zero polynomial function of entries of $v$ and $\mathcal{A}$, thus it can also be expressed as a non-zero polynomial function of entries of $x_0$, $A$ and $G_k$'s. Therefore, by Lemma \ref{lemma:polynomial}, $S_2$ has Lebesgue measure zero in $\mathbb{R}^{d+(m+1)d^2}$.

We know that $S \subseteq S_1 \cup S_2$, let $\lambda$ be Lebesgue measure on $\mathbb{R}^{d+(m+1)d^2}$, then one has
\begin{equation*}
    \lambda(S) \leqslant \lambda(S_1) + \lambda(S_2) = 0.
\end{equation*}
Thus $S$ has Lebesgue measure zero in $\mathbb{R}^{d+(m+1)d^2}$.
\end{proof}

\subsection{The identifiability condition stated in Theorem \ref{thm:m1c1} is generic}\label{sec:generic condition for m1}
We will show that the identifiability condition stated in Theorem \ref{thm:m1c1} is generic.

\begin{prop}
Let 
\begin{equation*}
S:=\{(x_0, A, G)\in \mathbb{R}^{d+d^2+dm}: \mbox{condition \eqref{eq:identify_con} in Theorem \ref{thm:m1c1} is violated}\},
\end{equation*}
then $S$ has Lebesgue measure zero in $\mathbb{R}^{d+d^2+dm}$.
\end{prop}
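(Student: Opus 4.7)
The plan is to mimic the strategy used in Proposition \ref{prop:conditions generic}: rewrite the failure of condition \eqref{eq:identify_con} as the simultaneous vanishing of a family of polynomials in the entries of $(x_0, A, G)$, and then reduce genericity to a single polynomial non-vanishing statement that is verified by exhibiting one explicit witness $(x_0^*, A^*, G^*)$ at which the rank is $d$. Lemma \ref{lemma:polynomial} then delivers the conclusion.

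More concretely, first I would observe that since $H = GG^\top$, every entry of $H$ is a quadratic polynomial in the entries of $G$. Hence every entry of the $d \times (d + d^2)$ matrix
\begin{equation*}
M(x_0, A, G) := [x_0|Ax_0|\ldots|A^{d-1}x_0|H_{\cdot 1}|AH_{\cdot 1}|\ldots|A^{d-1}H_{\cdot 1}|\ldots|H_{\cdot d}|AH_{\cdot d}|\ldots|A^{d-1}H_{\cdot d}]
\end{equation*}
is a polynomial in the entries of $(x_0, A, G) \in \mathbb{R}^{d+d^2+dm}$. Condition \eqref{eq:identify_con} fails precisely when all $d\times d$ minors of $M$ vanish, so if we let $\{p_I\}_I$ denote the finite collection of such minors and set $P := \sum_I p_I^2$, then $S$ coincides with the zero locus $\{(x_0,A,G): P(x_0,A,G) = 0\}$, and $P$ is a polynomial in the entries of $(x_0, A, G)$. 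By Lemma \ref{lemma:polynomial}, it suffices to exhibit a single point at which $P$ does not vanish, i.e.\ a single triple for which $\operatorname{rank} M = d$.

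For the witness, I would take $x_0^* = e_1$ (the first standard basis vector of $\mathbb{R}^d$), $A^*$ to be the nilpotent shift defined by $A^* e_i = e_{i+1}$ for $i=1,\ldots,d-1$ and $A^* e_d = 0$, and $G^* = 0 \in \mathbb{R}^{d\times m}$. Then $(A^*)^k x_0^* = e_{k+1}$ for $k=0,\ldots,d-1$, so the first $d$ columns of $M(x_0^*, A^*, G^*)$ form the identity matrix $I_d$, giving full rank $d$ (the remaining columns are zero since $H^* = G^*(G^*)^\top = 0$, which is harmless). Consequently some minor $p_{I_0}$ is non-zero at this point, so $P \not\equiv 0$; applying Lemma \ref{lemma:polynomial} yields $\lambda(S) = 0$ in $\mathbb{R}^{d+d^2+dm}$. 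There is no real obstacle in this argument — the only thing to verify carefully is the polynomial dependence on $G$ via $H = GG^\top$, which is immediate.
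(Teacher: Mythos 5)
Your proof is correct and takes essentially the same route as the paper: both reduce the failure of condition \eqref{eq:identify_con} to the vanishing of a not-identically-zero polynomial in the entries of $(x_0,A,G)$ and then invoke Lemma \ref{lemma:polynomial}. The only difference is cosmetic — the paper notes that $S$ is contained in the set where $\textnormal{rank}([x_0|Ax_0|\ldots|A^{d-1}x_0])<d$ and reuses the determinant expansion from the proof of Proposition \ref{prop:conditions generic}, whereas you take the sum of squares of all $d\times d$ minors of the full matrix and certify non-vanishing with an explicit witness; since your witness has $G^*=0$, the non-zero minor is precisely the determinant of those first $d$ columns, so the two arguments coincide at the decisive step (and your explicit witness is, if anything, a cleaner justification that the polynomial is non-zero than the paper's ``obviously'').
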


\begin{proof}
Suppose $(x_0, A, G) \in S$, then $(x_0, A, G)$ satisfies
\begin{equation*}
    \textnormal{rank}([x_0|A x_0|\ldots|A^{d-1}x_0|H_{\cdot 1}|AH_{\cdot 1}|\ldots|A^{d-1}H_{\cdot 1}|\ldots|H_{\cdot d}|AH_{\cdot d}|\ldots|A^{d-1}H_{\cdot d}])<d\,,
\end{equation*}
recall that $H:=GG^T$, and $H_{\cdot j}$ stands for the $j$-th column vector of matrix $H$, for all $j=1,\cdots, d$.   

Let 
\begin{equation*}
    S':=\{(x_0, A, G)\in \mathbb{R}^{d+d^2+dm}: \text{rank}([x_0|A x_0|\ldots|A^{d-1}x_0])<d\},
\end{equation*}
one observes that $S \subseteq S'$. According to the proof of Proposition \ref{prop:conditions generic}, it is readily checked that $S'$ has Lebesgue measure zero in $\mathbb{R}^{d+d^2+dm}$. Thus, $S$ has Lebesgue measure zero in $\mathbb{R}^{d+d^2+dm}$.
\end{proof}
\newpage
\section{Simulation settings}\label{appendix:simulation settings}
We present the true underlying system parameters along with the initial states of the SDEs employed in the simulation experiments. We randomly generate the true system parameters that satisfy or violate the corresponding identifiability conditions. 

For the SDE \eqref{eq:SDEs}:

1. identifiable case: satisfy condition \eqref{eq:identify_con} stated in Theorem \ref{thm:m1c1}:
\begin{equation*}
    x_0^{\text{id}} = \begin{bmatrix}1.87\\-0.98\end{bmatrix}, \ \ \
    A^{\text{id}} = \begin{bmatrix}  1.76 & -0.1\\ 0.98 & 0 \end{bmatrix}, \ \ \
    G^{\text{id}} = \begin{bmatrix}-0.11 & -0.14 \\ -0.29 & -0.22 \end{bmatrix};    
\end{equation*}


2. unidentifiable case: violate condition \eqref{eq:identify_con}:
\begin{equation*}
    x_0^{\text{un}} = \begin{bmatrix}1\\-1\end{bmatrix},\ \ \ 
    A^{\text{un}} = \begin{bmatrix}  1 & 2\\ 1 & 0 \end{bmatrix}, \ \ \ 
    G^{\text{un}} = \begin{bmatrix}0.11 & 0.22 \\ -0.11 & -0.22 \end{bmatrix}.    
\end{equation*}

For the SDE \eqref{eq:SDEs2}:

1. identifiable case: satisfy both A1 and A2 stated in Theorem \ref{thm:m2c1}:
\begin{equation*}
x_0^{\text{id}} = \begin{bmatrix}1.87\\-0.98\end{bmatrix}, \ \ \
A^{\text{id}} = \begin{bmatrix}  1.76 & -0.1\\ 0.98 & 0 \end{bmatrix}, \ \ \
G_1^{\text{id}} = \begin{bmatrix}-0.11 & -0.14 \\ -0.29 & -0.22 \end{bmatrix}, \ \ \ 
G_2^{\text{id}} = \begin{bmatrix}-0.17 & 0.59 \\ 0.81 & 0.18 \end{bmatrix};
\end{equation*}


2. unidentifiable case1: violate A1 satisfy A2:
\begin{equation*}
    x_0^{\text{un-A1}} = \begin{bmatrix}1\\1\end{bmatrix},\ \ \ 
    A^{\text{un-A1}} = \begin{bmatrix}  2 & 1\\ 3 & 0 \end{bmatrix}, \ \ \ 
    G_1^{\text{un-A1}} = \begin{bmatrix}-0.11 & -0.14 \\ -0.29 & -0.22 \end{bmatrix}, \ \ \ 
    G_2^{\text{un-A1}} = \begin{bmatrix}-0.17 & 0.59 \\ 0.81 & 0.18 \end{bmatrix};
\end{equation*}


3. unidentifiable case2: satisfy A1 violate A2:
\begin{equation*}
    x_0^{\text{un-A2}} = \begin{bmatrix}1\\-1\end{bmatrix}, \ \ \
    A^{\text{un-A2}} = \begin{bmatrix}  1 & -2\\-1 & 0 \end{bmatrix}, \ \ \
    G_1^{\text{un-A2}} = \begin{bmatrix}-0.3 & 0.4 \\ -0.7 & 0.2 \end{bmatrix}, \ \ \ 
    G_2^{\text{un-A2}} = \begin{bmatrix}0.8 & 0.2 \\ -0.2 & -0.4 \end{bmatrix}.
\end{equation*}

We have discussed in Section \ref{sec:simulation} that we use MLE method to estimate the system parameters from discrete observations sampled from the corresponding SDEs. Specifically, the negative log-likelihood function was minimized using the `scipy.optimize.minimize' library in Python. 

For all of our experiments, we initialized the parameter values as the true parameters plus 2. In the case of the SDE \eqref{eq:SDEs}, we utilized the `trust-constr' method with the hyper-parameter `gtol'$=1\text{e-}3$ and `xtol'$=1\text{e-3}$. On the other hand, for the SDE \eqref{eq:SDEs2}, we applied the `BFGS' method and set the hyper-parameter `gtol'$=1\text{e-}2$. The selection of the optimization method and the corresponding hyper-parameters was determined through a series of experiments aimed at identifying the most suitable configuration.

\newpage
\section{Examples for reliable causal inference for linear SDEs}

\subsection{Example for reliable causal inference for the SDE (\ref{eq:SDEs})}\label{eg:SDE1} 

 This example is inspired by \cite[Example 5.4]{hansen2014causal}. Recall that the SDE \eqref{eq:SDEs} is defined as 
    \begin{equation*}
        dX_t = AX_tdt + GdW_t, \ \ \  X_0 = x_0,
    \end{equation*}
    where $0\leqslant t < \infty$, $A\in \mathbb{R}^{d\times d}$ and $G\in \mathbb{R}^{d\times m}$ are constant matrices, $W$ is an $m$-dimensional standard Brownian motion. Let $X(t; x_0, A, G)$ denote the solution to the SDE \eqref{eq:SDEs}. Let $\tilde{A} \in \mathbb{R}^{d\times d}$ and $\tilde{G} \in \mathbb{R}^{d\times m}$ define the following SDE:
    \begin{equation*}
        d\tilde{X}_t = \tilde{A}\tilde{X}_tdt + \tilde{G}dW_t, \ \ \  \tilde{X}_0 = x_0,
    \end{equation*}
    such that  
    \begin{equation*}
       X(\cdot; x_0, A, G) \overset{\text{d}}{=} \tilde{X}(\cdot; x_0, \tilde{A}, \tilde{G}) \,.
    \end{equation*}
    Then under our proposed identifiability condition stated in Theorem \ref{thm:m1c1}, we have shown that the generator of the SDE \eqref{eq:SDEs} is identifiable, i.e., $(A, GG^{\top}) = (\tilde{A}, \tilde{G}\tilde{G}^{\top})$. Till now, we have shown that under our proposed identifiability conditions, the observational distribution $\xrightarrow{\text{identity}}$ the generator of the observational SDE. 
    Then we will show that the post-intervention distribution is also identifiable. For notational simplicity, we consider intervention on the first coordinate, making the intervention $X_t^1 = \xi$ and $\tilde{X}_t^1 = \xi$ for $0\leqslant t < \infty$. It will suffice to show equality of the distributions of the non-intervened coordinates (i.e., $X_t^{(-1)}$ and $\tilde{X}_t^{(-1)}$, note the superscripts do not denote reciprocals, but denote the $(d-1)$-coordinates without the first coordinate). Express the matrices of $A$ and $G$ in blocks 
    \begin{equation*}
        A = \begin{bmatrix}
            A_{11} & A_{12}\\
            A_{21} & A_{22}
        \end{bmatrix},\ \ \
    G = 
    \begin{bmatrix}
        G_1 \\
        G_2
    \end{bmatrix},
    \end{equation*}
    where $A_{11} \in \mathbb{R}^{1\times 1}$, $A_{22} \in \mathbb{R}^{(d-1)\times (d-1)}$, $G_1 \in \mathbb{R}^{1\times m}$ and $G_2 \in \mathbb{R}^{(d-1) \times m}$. Also, consider corresponding expressions of matrices $\tilde{A}$ and $\tilde{G}$.
    By making intervention $X_t^1 = \xi$, one obtains the post-intervention process of the first SDE satisfies:
    \begin{equation*}
        dX_t^{(-1)} = (A_{21}\xi + A_{22}X_t^{(-1)})dt + G_2dW_t\,, \ \ \ X_0^{(-1)} = x_0^{(-1)}\,,
    \end{equation*}
    which is a multivariate Ornstein-Uhlenbeck process, according to \cite[Corollary 1]{vatiwutipong2019alternative}, this process is a Gaussian process, assuming $A_{22}$ is invertible, then the mean vector can be described as 
    \begin{equation*}
        E[X_t^{(-1)}] = e^{A_{22}t}x_0^{(-1)} - (I - e^{A_{22}t})A_{22}^{-1}A_{21}\xi,
    \end{equation*}
    and based on \cite[Theorem 2]{vatiwutipong2019alternative}, the cross-covariance can be described as
    \begin{equation*}
    \begin{split}
        V(X_{t+h}^{(-1)},X_t^{(-1)}) :&= \mathbb{E}\{(X_{t+h}^{(-1)}-\mathbb{E}[X_{t+h}^{(-1)}])(X_{t}^{(-1)}-\mathbb{E}[X_{t}^{(-1)}])^{\top}\}\\
        &= \int_0^t e^{A_{22}(t+h-s)} G_2 G_2^{\top}e^{A_{22}^{\top}(t-s)}ds\,.
    \end{split}
    \end{equation*}
    Similarly, one can obtain that the mean vector and cross-covariance of the distribution of the post-intervention process of the second SDE by making intervention $\tilde{X}_t^1 = \xi$ satisfy:
    \begin{equation*}
        E[\tilde{X}_t^{(-1)}] = e^{\tilde{A}_{22}t}x_0^{(-1)} - (I - e^{\tilde{A}_{22}t})\tilde{A}_{22}^{-1}\tilde{A}_{21}\xi,
    \end{equation*}
    and
    \begin{equation*}
    \begin{split}
        V(\tilde{X}_{t+h}^{(-1)}, \tilde{X}_{t}^{(-1)}) :&= \mathbb{E}\{(\tilde{X}_{t+h}^{(-1)}-\mathbb{E}[\tilde{X}_{t+h}^{(-1)}])(\tilde{X}_{t}^{(-1)}-\mathbb{E}[\tilde{X}_{t}^{(-1)}])^{\top}\}\\
        &= \int_0^t e^{\tilde{A}_{22}(t+h-s)} \tilde{G}_2 \tilde{G}_2^{\top}e^{\tilde{A}_{22}^{\top}(t-s)}ds
    \end{split}.
    \end{equation*}
    Then we will show that $E[X_t^{(-1)}] = E[\tilde{X}_t^{(-1)}]$, and $V(X_{t+h}^{(-1)},X_t^{(-1)}) = V(\tilde{X}_{t+h}^{(-1)}, \tilde{X}_{t}^{(-1)})$ for all $0 \leqslant t, h <\infty$. Recall that we have shown $(A, GG^{\top}) = (\tilde{A}, \tilde{G}\tilde{G}^{\top})$, thus, $A_{22} = \tilde{A}_{22}$ and $A_{21} = \tilde{A}_{21}$, then it is readily checked that $E[X_t^{(-1)}] = E[\tilde{X}_t^{(-1)}]$ for all $0 \leqslant t <\infty$. 
    
    Since 
    \begin{equation*}
        GG^{\top} = \begin{bmatrix}
            G_1 G_1^{\top} & G_1 G_2^{\top}\\
            G_2 G_1^{\top} & G_2 G_2^{\top}
        \end{bmatrix} = \tilde{G}\tilde{G}^{\top},
    \end{equation*}
    thus, $G_2 G_2^{\top} = \tilde{G}_2 \tilde{G}_2^{\top}$, then it is readily checked that $V(X_{t+h}^{(-1)},X_t^{(-1)}) = V(\tilde{X}_{t+h}^{(-1)}, \tilde{X}_{t}^{(-1)})$ for all $0 \leqslant t, h <\infty$. Since both of these two post-intervention processes are Gaussian processes, according to Lemma \ref{lemma:law same}, the distributions of these two post-intervention processes are the same. That is, the post-intervention distribution is identifiable.

\subsection{Example for reliable causal inference for the SDE (\ref{eq:SDEs2})}\label{eg:SDE2}

Recall that the SDE \eqref{eq:SDEs2} is defined as 
    \begin{equation*}
        dX_t = A X_tdt + \textstyle\sum_{k=1}^mG_kX_tdW_{k,t} , \ \ \ X_0 = x_0,
    \end{equation*}
    where $0\leqslant t<\infty$, $A, G_k \in \mathbb{R}^{d\times d}$ for $k = 1, \ldots, m$ are some constant matrices,  $W := \{W_t= [W_{1,t}, \ldots, W_{m,t}]^{\top}: 0\leqslant t<\infty\}$ is an $m$-dimensional standard Brownian motion. Let $X(t; x_0, A, \{G_k\}_{k=1}^m)$ denote the solution to the SDE \eqref{eq:SDEs2}. Let $\tilde{A}, \tilde{G}_k \in \mathbb{R}^{d\times d}$ for $k = 1, \ldots, m$ define the following SDE:
    \begin{equation*}
        d\tilde{X}_t = \tilde{A}  \tilde{X}_tdt + \textstyle\sum_{k=1}^m\tilde{G}_k\tilde{X}_tdW_{k,t} , \ \ \ \tilde{X}_0 = x_0,
    \end{equation*}
    such that  
    \begin{equation*}
        X(\cdot; x_0, A, \{G_k\}_{k=1}^m) \overset{\text{d}}{=} \tilde{X}(\cdot; x_0, \tilde{A}, \{\tilde{G}_k\}_{k=1}^m)\,.
    \end{equation*}
    Then under our proposed identifiability condition stated in Theorem \ref{thm:m2c1}, we have shown that the generator of the SDE \eqref{eq:SDEs2} is identifiable, i.e., $(A, \textstyle\sum_{k=1}^mG_kxx^{\top}G_k^{\top}) = (\tilde{A}, \textstyle\sum_{k=1}^m\tilde{G}_kxx^{\top}\tilde{G}_k^{\top})$ for all $x\in \mathbb{R}^d$. Till now, we have shown that under our proposed identifiability conditions, the observational distribution $\xrightarrow{\text{identity}}$ the generator of the observational SDE. 
    Then we aim to show that the post-intervention distribution is also identifiable. For notational simplicity, we consider intervention on the first coordinate, making the intervention $X_t^1 = \xi$ and $\tilde{X}_t^1 = \xi$ for $0\leqslant t < \infty$. It will suffice to show equality of the distributions of the non-intervened coordinates (i.e., $X_t^{(-1)}$ and $\tilde{X}_t^{(-1)}$). Express the matrices of $A$ and $G_k$ for $k=1, \ldots, m$ in blocks 
    \begin{equation*}
        A = \begin{bmatrix}
            A_{11} & A_{12}\\
            A_{21} & A_{22}
        \end{bmatrix},\ \ \
    G_k = 
    \begin{bmatrix}
        G_{k,11}  & G_{k,12}\\
        G_{k,21}  & G_{k,22}
    \end{bmatrix},
    \end{equation*}
    where $A_{11}, G_{k,11} \in \mathbb{R}^{1\times 1}$, $A_{22}, G_{k,22} \in \mathbb{R}^{(d-1)\times (d-1)}$. Also consider corresponding expressions of matrices $\tilde{A}$ and $\tilde{G}_k$ for $k=1, \ldots, m$.
    By making intervention $X_t^1 = \xi$, one obtains the post-intervention process of the first SDE satisfies:
    \begin{equation*}
        dX_t^{(-1)} = (A_{21}\xi + A_{22}X_t^{(-1)})dt + \textstyle\sum_{k=1}^m (G_{k,21}\xi + G_{k,22} X_t^{(-1)})dW_{k,t},\ \ \ X_0^{(-1)} = x_0^{(-1)}.
    \end{equation*}
    Since this post-intervention process is not a Gaussian process, one cannot explicitly show that the post-intervention distribution is identifiable. Instead, we check the surrogate of the post-intervention distribution, that is the first- and second-order moments of the post-intervention process $X_t^{(-1)}$. Which denote as $m(t)^{(-1)} = \mathbb{E}[X_t^{(-1)}]$ and $P(t)^{(-1)} = \mathbb{E}[X_t^{(-1)} (X_t^{(-1)})^{\top}]$ respectively. Then $m(t)^{(-1)}$ and $P(t)^{(-1)}$ satisfy the following ODE systems:
    \begin{equation*}
        \frac{dm(t)^{(-1)}}{dt} = A_{21}\xi + A_{22}m(t)^{(-1)},\ \ \ m(0)^{-1} = x_0^{(-1)},
    \end{equation*}
    and 
    \begin{equation}\label{eq:covariance2}
    \begin{split}
        \frac{dP(t)^{(-1)}}{dt} &= m(t)^{(-1)}\xi^{\top}A_{21}^{\top} + A_{21}\xi(m(t)^{(-1)})^{\top} + P(t)^{(-1)} A_{22}^{\top} + A_{22}P(t)^{(-1)} \\
        &+ \textstyle\sum_{k=1}^m(G_{k,21}\xi\xi^{\top}G_{k,21}^{\top} + G_{k,22}m(t)^{(-1)}\xi^{\top}G_{k,21}^{\top} + G_{k,21}\xi(m(t)^{(-1)})^{\top}G_{k,22}^{\top} \\&+G_{k,22}P(t)^{(-1)}G_{k,22}^{\top} ), \ \ \ P(0)^{(-1)} = x_0^{(-1)} (x_0^{(-1)})^{\top}.
    \end{split}
    \end{equation}
    Similarly, one can obtain the ODE systems describing the $\tilde{m}(t)^{(-1)}$ and $\tilde{P}(t)^{(-1)}$. Then we will show that $m(t)^{(-1)} = \tilde{m}(t)^{(-1)}$ and $P(t)^{(-1)} = \tilde{P}(t)^{(-1)}$ for all $0\leqslant t < \infty$. Recall that we have shown $A = \tilde{A}$, thus $A_{21} = \tilde{A}_{21}$ and $A_{22} = \tilde{A}_{22}$, then it is readily checked that $m(t)^{(-1)} = \tilde{m}(t)^{(-1)}$ for all $0\leqslant t < \infty$. 
    
    In the proof of Theorem \ref{thm:m2c1}, we have shown that 
    \begin{equation*}
        \textstyle\sum_{k=1}^mG_kP(t)G_k^{\top} = \textstyle\sum_{k=1}^m\tilde{G}_kP(t)\tilde{G}_k^{\top}
    \end{equation*}
    for all $0\leqslant t < \infty$, where $P(t) = \mathbb{E}[X_t X_t^{\top}]$. Simple calculation shows that 
    \begin{equation*}
       \sum_{k=1}^mG_kP(t)G_k^{\top} =\sum_{k=1}^m \Bigg( \begin{bmatrix}
           G_{k,11} & G_{k,12}\\
          G_{k,21} & G_{k,22}
       \end{bmatrix}
       \begin{bmatrix}
           \xi \xi^{\top} & \xi (m(t)^{(-1)})^{\top}\\
           m(t)^{(-1)}\xi^{\top}&  P(t)^{(-1)}
       \end{bmatrix}
       \begin{bmatrix}
           G_{k,11}^{\top} & G_{k,21}^{\top}\\
           G_{k,12}^{\top} & G_{k,22}^{\top}
       \end{bmatrix}\Bigg),
    \end{equation*}

    Then one can get that the $(2,2)$-th block entry of the matrix $\textstyle\sum_{k=1}^m G_kP(t)G_k^{\top}$ is the same as the $\textstyle\sum_{k=1}^m (\ldots)$ part in the ODE corresponds to $P(t)^{(-1)}$ (i.e., Equation \eqref{eq:covariance2}), since $\textstyle\sum_{k=1}^mG_kP(t)G_k^{\top} = \textstyle\sum_{k=1}^m\tilde{G}_kP(t)\tilde{G}_k^{\top}$, then the $\textstyle\sum_{k=1}^m (\ldots)$ part in the ODEs correspond to both $P(t)^{(-1)}$ and $\tilde{P}(t)^{(-1)}$ are the same. Thus, it is readily checked that $P(t)^{(-1)} = \tilde{P}(t)^{(-1)}$ for all $0\leqslant t < \infty$. 
    
    Though we cannot explicitly show that the post-intervention distribution is identifiable, showing that the first- and second-order moments of the post-intervention process is identifiable can indicate the identification of the post-intervention distribution to a considerable extent.
\newpage
\section{Conditions for identifying the generator of a linear SDE with multiplicative noise when its explicit solution is available}\label{sec:appendix c} 

\begin{prop}\label{prop:SDE2C2}
Let $x_0 \in \mathbb{R}^d$ be fixed. The generator of the SDE \eqref{eq:SDEs2} is identifiable from $x_0$ if the following conditions are satisfied:
\begin{enumerate}
    \item[\textnormal{C1}] $\textnormal{rank}([x_0|A x_0|\ldots|A^{d-1}x_0|H_{\cdot 1}|AH_{\cdot 1}|\ldots|A^{d-1}H_{\cdot 1}|\ldots|H_{\cdot d}|AH_{\cdot d}|\ldots|A^{d-1}H_{\cdot d}])=d$\,,
    \item[\textnormal{C2}] $\textnormal{rank}([v|\mathcal{A} v|\ldots|\mathcal{A}^{(d^2+d-2)/2}v])=(d^2+d)/2$\,,
    \item[\textnormal{C3}] $AG_k = G_k A$ and $G_k G_l = G_l G_k$ for all $k,l=1, \ldots, m$\,.
\end{enumerate}
where $H:=\sum_{k=1}^mG_kx_0x_0^{\top}G_k^{\top}$, and $H_{\cdot j}$ stands for the $j$-th column vector of matrix $H$, for all $j=1,\cdots, d$. And $\mathcal{A} = A \oplus A + \sum_{k=1}^m G_k \otimes G_k \in \mathbb{R}^{d^2 \times d^2}$, $\oplus$ denotes Kronecker sum and $\otimes$ denotes Kronecker product, $v$ is a $d^2$-dimensional vector defined by $v := \textnormal{vec}(x_0x_0^{\top})$, where $\textnormal{vec}(M)$ denotes the vectorization of matrix $M$.
\end{prop}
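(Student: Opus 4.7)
The plan is to combine the identification strategies of Theorem \ref{thm:m1c1} and Theorem \ref{thm:m2c1}, exploiting condition C3 to obtain an explicit solution whose cross-covariance factorises exactly as in the additive-noise case. Suppose $\tilde{A}$ and $\tilde{G}_1,\ldots,\tilde{G}_m$ give rise to an equidistributed process; I index the two systems by subscripts $1$ and $2$ as in the proof of Theorem \ref{thm:m1c1}. Differentiating $\mathbb{E}[f(X_t^i)]$ at $t=0$ with the test functions $f(x)=x_p$ and $f(x)=(x_p-x_{0p})(x_q-x_{0q})$ and invoking Proposition \ref{prop:generator} immediately yields $A_1 x_0 = A_2 x_0$ together with $H_1 = H_2 =: H$, where $H_i := \sum_{k=1}^m G_{i,k}\,x_0 x_0^\top G_{i,k}^\top$.

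Next I would exploit C3 to obtain the key structural identity for the cross-covariance. Under the commuting assumption, the SDE \eqref{eq:SDEs2} admits the closed-form fundamental matrix
\[
\Phi_i(t) = \exp\!\Bigl(\bigl(A_i - \tfrac{1}{2}\textstyle\sum_{k=1}^m G_{i,k}^2\bigr) t + \textstyle\sum_{k=1}^m G_{i,k} W_{k,t}\Bigr),
\]
so that $X_t^i = \Phi_i(t) x_0$. Commutativity plus independence of Brownian increments gives a multiplicative factorisation $\Phi_i(t+h) = \Psi_i(h)\,\Phi_i(t)$ with $\Psi_i(h) \overset{\text{d}}{=} \Phi_i(h)$ independent of $\Phi_i(t)$; taking expectations yields $\mathbb{E}[X_{t+h}^i (X_t^i)^\top] = e^{A_i h}\,\mathbb{E}[X_t^i (X_t^i)^\top]$, whence
\[
V_i(t,t+h) = e^{A_i h}\,V_i(t),
\]
mirroring \eqref{eq:VForm} in the additive case. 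I would then replay the proof of Theorem \ref{thm:m1c1} verbatim: $j$-fold differentiation in $h$ at $h=0$ gives $A_1^j V(t) = A_2^j V(t)$ for all $t\ge 0$, while differentiating once in $t$ at $t=0$ combined with the second-moment ODE \eqref{eq:secondmoment1} gives $\dot V_i(0) = H$, producing the chain of identities $A_1^j H = A_2^j H$ for all $j\ge 0$. Combined with $A_1^j x_0 = A_2^j x_0$ from the mean equation, condition C1 then forces $A_1 = A_2 =: A$.

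Once $A$ is identified, the argument closes via the second half of Theorem \ref{thm:m2c1}. Equating the second-moment ODEs \eqref{eq:secondmoment1} for the two systems and cancelling the common $A$-terms gives $\sum_{k=1}^m G_{1,k} P(t) G_{1,k}^\top = \sum_{k=1}^m G_{2,k} P(t) G_{2,k}^\top$ for every $t\ge 0$, where $P(t) = \mathbb{E}[X_t X_t^\top]$. Because C2 is identical to condition A2 of Theorem \ref{thm:m2c1}, the orbit $\textnormal{vec}(P(t)) = e^{\mathcal{A}t} v$ spans the full $(d^2+d)/2$-dimensional subspace of symmetric matrices, so every rank-one $xx^\top$ is a linear combination of finitely many $P(t_i)$'s, and the desired identity $\sum_{k} G_{1,k}\, xx^\top G_{1,k}^\top = \sum_{k} G_{2,k}\, xx^\top G_{2,k}^\top$ follows for all $x\in\mathbb{R}^d$ by linearity.

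The main obstacle is justifying the cross-covariance factorisation $V_i(t,t+h) = e^{A_i h}V_i(t)$ under C3. Verifying $\Phi_i(t+h) = \Psi_i(h)\Phi_i(t)$ as a genuine product of independent matrix exponentials requires using C3 twice: first to ensure that $A_i - \tfrac{1}{2}\sum_k G_{i,k}^2$ commutes with every $G_{i,l}$, and second to split the exponent at time $t+h$ into an $\mathcal{F}_t$-measurable factor and an increment-only factor. Once this structural identity is in hand, the remainder of the argument is a near-verbatim transcription of the proofs of Theorems \ref{thm:m1c1} and \ref{thm:m2c1}.
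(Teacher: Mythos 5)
Your proposal is correct and follows essentially the same route as the paper's proof: identify $H:=\sum_k G_k x_0x_0^\top G_k^\top$ via the generator, use C3 to obtain the explicit solution and the factorisation of the cross-covariance in $h$, differentiate in $h$ and $t$ to extract $A_1^j x_0 = A_2^j x_0$ and $A_1^j H = A_2^j H$, apply C1 to force $A_1=A_2$, and then reuse the condition-A2 argument from the proof of Theorem \ref{thm:m2c1}. The only cosmetic difference is that you phrase the factorisation for the centred covariance, so that $V_i(0)=0$ gives $\dot V_i(0)=H$ at once (note this requires the ODE for the \emph{centred} covariance rather than \eqref{eq:secondmoment1} verbatim), whereas the paper works with $P(t,t+h)=\mathbb{E}[X_t X_{t+h}^\top]$ and cancels the extra terms $A_i x_0x_0^\top(A_i^\top)^j + x_0x_0^\top(A_i^\top)^{j+1}$ by invoking $A_1^j x_0=A_2^j x_0$ a second time.
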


\begin{proof}
Let $\tilde{A}, \tilde{G}_k \in \mathbb{R}^{d\times d}$ and $\tilde{A}\tilde{G}_k = \tilde{G}_k\tilde{A} $, $\tilde{G}_k\tilde{G}_l = \tilde{G}_l\tilde{G}_k $ for all $k,l=1, \ldots, m$, such that $X(\cdot; x_0, A, \{G_k\}_{k=1}^m) \overset{\text{d}}{=} X(\cdot; x_0, \tilde{A}, \{\tilde{G}_k\}_{k=1}^m) $, we denote as $X \overset{\text{d}}{=}\tilde{X}$, we will show that under our identifiability condition, for all $x\in \mathbb{R}^d$, $(A, \sum_{k=1}^mG_kxx^{\top}G_k^{\top}) = (\tilde{A}, \sum_{k=1}^m\tilde{G}_kxx^{\top}\tilde{G}_k^{\top})$. By applying the same notations used in the proof of Theorem \ref{thm:m2c1}, in the following, we denote $A_1:= A$, $A_2:= \tilde{A}$, $G_{1,k}:= G_k$ and $G_{2,k}:= \tilde{G}_k$, and denote  $X \overset{\text{d}}{=}\tilde{X}$ as  $X^1 \overset{\text{d}}{=}X^2$.

We first show that $H_{1}=H_{2}$ ($H_{i}:= \sum_{k=1}^m G_{i,k} x_0 x_0^{\top} G_{i,k}^{\top}$).
Indeed, since $X^{1},X^{2}$ have the same distribution, one has 
\begin{equation}
\mathbb{E}[f(X_{t}^{1})]=\mathbb{E}[f(X_{t}^{2})]\label{eq:EqualSG1}
\end{equation}
for all $0 \leqslant t < \infty$ and $f\in C^{\infty}(\mathbb{R}^{d}).$ By
differentiating (\ref{eq:EqualSG1}) at $t=0$, one finds that 
\begin{equation}
({\cal L}_{1}f)(x_{0})=({\cal L}_{2}f)(x_{0})\,,\label{eq:EqualGen1}
\end{equation}
where ${\cal L}_{i}$ is the generator of $X^{i}$ ($i=1,2$). Based on the Proposition \ref{prop:generator},
\begin{equation*}
({\cal L}_{i}f)(x_{0})=\sum_{k=1}^{d}\sum_{l=1}^{d}(A_{i})_{kl}x_{0l}\frac{\partial f}{\partial x_{k}}(x_{0}) + \frac{1}{2}\sum_{k,l=1}^{d}(H_{i})_{kl}\frac{\partial^{2}f}{\partial x_{k}\partial x_{l}}(x_{0})\,,
\end{equation*}
where $(M)_{kl}$ denotes the $kl$-entry of matrix $M$, and $x_{0l}$
is the $l$-th component of $x_{0}.$ Since (\ref{eq:EqualGen1}) is
true for all $f$, by taking 
\begin{equation*}
f(x)=(x_{p}-x_{0p})(x_{q}-x_{0q})\,,
\end{equation*}
it is readily checked that 
\begin{equation*}
(H_{1})_{pq}=(H_{2})_{pq}\,,
\end{equation*}
for all $p,q=1, \ldots, d$.
As a result, $H_{1}=H_{2}$. Let us call this matrix $H$.
That is
\begin{equation*}
H := H_{1}=\sum_{k=1}^m G_{1,k} x_0 x_0^{\top} G_{1,k}^{\top}=\sum_{k=1}^m G_{2,k} x_0 x_0^{\top} G_{2,k}^{\top} = H_{2}\,.
\end{equation*}
In the proof of Theorem \ref{thm:m2c1}, we have shown that
\begin{equation}\label{eq:Akx02}
    A_1 A^{j-1} x_0 =  A_2 A^{j-1} x_0 \ \ \ \text{for all } j=1,2,\ldots \,,
\end{equation}
next, we will derive the relationship between $A_i$ and $H$. Under condition C3, the SDE system \eqref{eq:SDEs2} has an explicit solution (cf. \cite{Kloeden1992numerical}):
\begin{equation}\label{eq:SDE2sol}
X_t :=X(t; x_0, A, \{G_{k}\}_{k=1}^m) = \text{exp}\Bigg\{\Bigg(A-\frac{1}{2}\sum_{k=1}^m G_k^2\Bigg)t + \sum_{k=1}^m G_k W_{k,t}\Bigg\}x_0 \,,    
\end{equation}
then the covariance of $X_t$, $P(t, t+h) = \mathbb{E}[X_t X_{t+h}^{\top}]$ can be calculated as
\begin{equation}\label{eq:p(t)part1}
\begin{split}
&\mathbb{E}[X_t X_{t+h}^{\top}] \\
=\, &\mathbb{E}\Bigg[ \text{exp}\Bigg\{\Bigg(A-\frac{1}{2}\sum_{k=1}^m G_k^2\Bigg)t + \sum_{k=1}^m G_k W_{k,t}\Bigg\}
x_0 x_0^{\top}
\text{exp}\Bigg\{\Bigg(A^{\top}-\frac{1}{2}\sum_{k=1}^m (G_k^2)^{\top}\Bigg)(t+h) \\
&+ \sum_{k=1}^m G_k^{\top} W_{k,t+h}\Bigg\}\Bigg]\\
= \,& e^{At} e^{-\frac{1}{2}\sum_{k=1}^m G_k^2 t}
\mathbb{E}\big[e^{\sum_{k=1}^m G_k W_{k,t}} x_0 x_0^Te^{\sum_{k=1}^m G_k^{\top} W_{k,t+h}} \big]
e^{-\frac{1}{2}\sum_{k=1}^m (G_k^2)^{\top} (t+h)} e^{A^{\top}(t+h)}\,,
\end{split}
\end{equation}
where 
\begin{equation}\label{eq:p(t)part2}
\begin{split}
    &\mathbb{E}\big[e^{\sum_{k=1}^m G_k W_{k,t}} x_0 x_0^Te^{\sum_{k=1}^m G_k^{\top} W_{k,t+h}} \big]\\
    =\,&\mathbb{E}\big[e^{\sum_{k=1}^m G_k W_{k,t}} x_0 x_0^T e^{\sum_{k=1}^m G_k^{\top} W_{k,t}}
    e^{-\sum_{k=1}^m G_k^{\top} W_{k,t}}
    e^{\sum_{k=1}^m G_k^{\top} W_{k,t+h}} \big]\\
    =\, &\mathbb{E}\big[e^{\sum_{k=1}^m G_k W_{k,t}} x_0 x_0^T e^{\sum_{k=1}^m G_k^{\top} W_{k,t}}
    e^{\sum_{k=1}^m G_k^{\top} (W_{k,t+h}-W_{k,t})} \big]\\
    = \, & \mathbb{E}\big[e^{\sum_{k=1}^m G_k W_{k,t}} x_0 x_0^T e^{\sum_{k=1}^m G_k^{\top} W_{k,t}} \big]
    \mathbb{E}\big[ e^{\sum_{k=1}^m G_k^{\top} (W_{k,t+h}-W_{k,t})} \big]\,,
\end{split}
\end{equation}
because the Brownian motion $W_{k,t}$ has independent increments.

It is known that, for $Z \sim \mathcal{N}(0,1)$, we have that the $j$th moment is
\begin{equation*}
\mathbb{E}(Z^j) = \left\{\begin{array}{cc}0\,, &j \mbox{ is odd}\,,\\
2^{-j/2}\frac{j!}{(j/2)!}\,, &j \mbox{ is even}\,.\end{array}\right .
\end{equation*}
Since $W_{k,t} \sim \mathcal{N}(0, t)$, we have
\begin{equation*}
\begin{split}
    \mathbb{E}[e^{G_kW_{k,t}}] &= \mathbb{E}\Bigg[\sum_{j=0}^\infty \frac{(G_k)^j(W_{k,t})^j}{j!}\Bigg]\\
    &= \sum_{j=0}^\infty \frac{(G_k)^j\mathbb{E}[(W_{k,t})^j]}{j!}\\
    &= \sum_{j=0, 2, 4 \ldots}^\infty\frac{(G_k)^j (t/2)^{j/2}}{(j/2)!}\\
    &= \sum_{i=0}^\infty \frac{(G_k^2t/2)^i}{i!}\\
    &= e^{G_k^2 t/2} \,.
\end{split}
\end{equation*}
Similarly, we have 
\begin{equation*}
    \mathbb{E}[e^{G_k^{\top} (W_{k,t+h}-W_{k,t})}] = e^{(G_k^{\top})^2h/2}\,.
\end{equation*}
Simple calculation shows that 
\begin{equation}\label{eq:p(t)part3}
\mathbb{E}\big[ e^{\sum_{k=1}^m G_k^{\top} (W_{k,t+h}-W_{k,t})} \big] = e^{\sum_{k=1}^m(G_k^{\top})^2h/2}\,.
\end{equation}
By combining Equations \eqref{eq:p(t)part1}, \eqref{eq:p(t)part2} and \eqref{eq:p(t)part3}, one readily obtains that
\begin{equation}\label{eq:P(t,t+h)}
    P(t, t+h) = P(t, t)e^{A^{\top}h}\,,
\end{equation}
we denote $P(t) := P(t,t)$.
Set $P_i(t, t+h) = \mathbb{E}[X_t^i (X_{t+h}^i)^{\top}]$, since $X^1 \overset{\text{d}}{=} X^2$, it follows that
\begin{equation*}
    P_1(t, t+h) = \mathbb{E}[X_t^1 (X_{t+h}^1)^{\top}] = \mathbb{E}[X_t^2 (X_{t+h}^2)^{\top}] = P_2(t, t+h) \ \ \  \forall t,h\geqslant 0\,.
\end{equation*}

To obtain information about $A$, let us fix $t$ for now and take $j$-th derivative of \eqref{eq:P(t,t+h)} with respect to $h$. One finds that
\begin{equation}
    \frac{d^j}{dh^j}\bigg|_{h=0}P(t, t+h) = P(t) (A^{\top})^j\,,
\end{equation}
for all $j=1, 2, \ldots$. It is readily checked that 
\begin{equation}\label{eq:p1A1=P2A2}
    P_1(t) (A_1^{\top})^j =  P_2(t) (A_2^{\top})^j \ \ \ \forall 0 \leqslant t < \infty\,.
\end{equation}
We know the function $P_i(t)$ satisfies the ODE
\begin{equation}
\begin{split}
    \Dot{P}_i(t) &=  A_i P_i(t) +P_i(t)A_i^{\top}  + \sum_{k=1}^m G_{i,k} P_i(t) G_{i,k}^{\top}\,, \ \ \ \forall 0 \leqslant t < \infty\,,\\
    P_i(0) &= x_0 x_0^{\top}\,.
\end{split}
\end{equation}
In particular,
\begin{equation*}
    \Dot{P}_i(0) = A_i x_0 x_0^{\top} + x_0 x_0^{\top} A_i^{\top} + \sum_{k=1}^m G_{i,k} x_0 x_0^{\top} G_{i,k}^{\top}\,.
\end{equation*}
By differentiating \eqref{eq:p1A1=P2A2} with respect to $t$ at $t=0$, it follows that
\begin{equation*}
\begin{split}
    & A_1 x_0 x_0^{\top} (A_1^{\top})^j + x_0 x_0^{\top} (A_1^{\top})^{j+1} + \bigg(\sum_{k=1}^m G_{1,k} x_0 x_0^{\top} G_{1,k}^{\top}\bigg) (A_1^{\top})^j\\
    = & A_2 x_0 x_0^{\top} (A_2^{\top})^j + x_0 x_0^{\top} (A_2^{\top})^{j+1} + \bigg(\sum_{k=1}^m G_{2,k} x_0 x_0^{\top} G_{2,k}^{\top}\bigg) (A_2^{\top})^j\,.
\end{split}
\end{equation*}
Since we have known that $A_1^j x_0 = A_2^j x_0$ for all $j=1,2,\ldots$, it is readily checked that 
\begin{equation*}
    \bigg(\sum_{k=1}^m G_{1,k} x_0 x_0^{\top} G_{1,k}^{\top}\bigg) (A_1^{\top})^j=
    \bigg(\sum_{k=1}^m G_{2,k} x_0 x_0^{\top} G_{2,k}^{\top}\bigg) (A_2^{\top})^j\,,
\end{equation*}
that is $A_1^j H = A_2^j H $ for all $j=1,2,\ldots$. Let us denote this matrix $A^j H$. Obviously, by rearranging this matrix, one gets
\begin{equation*}
    A_1 A^{j-1} H = A_2 A^{j-1} H \ \ \ \mbox{ for all } j=1,2,\ldots
\end{equation*}
Therefore, under condition C1, that is $\textnormal{rank}(M) = d$ with 
\begin{equation}\label{eq:AkH2}
    M :=[x_0|A x_0|\ldots|A^{d-1}x_0|H_{\cdot 1}|AH_{\cdot 1}|\ldots|A^{d-1}H_{\cdot 1}|\ldots|H_{\cdot d}|AH_{\cdot d}|\ldots|A^{d-1}H_{\cdot d}]\,.
\end{equation}
if we denote the $j$-th column in $M$ as $M_{\cdot j}$, one gets $A_1 M_{\cdot j} = A_2 M_{\cdot j}$ for all $j=1,\ldots, d+d^2$ by equations \eqref{eq:Akx02} and \eqref{eq:AkH2}.

This means one can find a full-rank matrix $B\in \mathbb{R}^{d\times d}$ by horizontally stacking $d$ linearly independent columns from matrix $M$, such that $A_1 B = A_2 B$. Since $B$ is invertible, one thus concludes that $A_1 = A_2$.

In the proof of Theorem \ref{thm:m2c1}, we have shown that when $A_1 = A_2$, under condition C2, for all $x\in \mathbb{R}^d$,
\begin{equation*}
    \sum_{k=1}^m G_{1,k}xx^{\top} G_{1,k}^{\top} =  \sum_{k=1}^m G_{2,k} x x^{\top}  G_{2,k}^{\top} \,.
\end{equation*}
Thus the proposition is proved.
\end{proof}
It is noteworthy that Proposition \ref{prop:SDE2C2} is established on the explicit solution assumption of the SDE \eqref{eq:SDEs2}, which requires both sets of vectors $\{A, \{G_k\}_{k=1}^m\}$ and $\{\tilde{A}, \{\tilde{G}_k\}_{k=1}^m\}$ to satisfy condition C3. As aforementioned, condition C3 is very restrictive and impractical, rendering the identifiability condition derived in this proposition unsatisfactory. Nonetheless, this condition is presented to illustrate that condition C1 is more relaxed compared to condition A1 stated in Theorem \ref{thm:m2c1} when identifying $A$ with the incorporation of $G_k$'s information. 

\end{document}